\documentclass[12pt,twoside,reqno]{amsart}
\usepackage{amsmath}
\usepackage{amsfonts}
\usepackage{amssymb}
\usepackage{color}
\usepackage{mathrsfs}
\usepackage{multicol}
\usepackage{graphicx}
\usepackage{cite}
\usepackage{geometry}
\usepackage{marginnote}
\usepackage{todonotes}
\usepackage{tikz}
\usepackage{pgfplots}
\pgfplotsset{compat=1.10}
\usepgfplotslibrary{fillbetween}
\usetikzlibrary{patterns}
\DeclareMathOperator*{\argmin}{argmin}
\newtheorem{theorem}{Theorem}[section]
\newtheorem{corollary}[theorem]{Corollary}
\newtheorem{lemma}[theorem]{Lemma}
\newtheorem{proposition}[theorem]{Proposition}
\newtheorem{definition}[theorem]{Definition}
\newtheorem{remark}[theorem]{Remark}
\numberwithin{equation}{section}
\allowdisplaybreaks[1]
\newcommand{\eqntag}{\addtocounter{equation}{1}\tag{\theequation}} 
\begin{document}
\title{Global existence and boundedness for a degenerate chemotaxis system with indirect signal production via minimizing movement schemes} 
\thanks{\texttt{ORCID: 0000-0002-1027-9485(TH), 0000-0003-3091-8085 (PhL)}}
\author{Tatsuya Hosono}
\address{Osaka Central Advanced Mathematical Institute, Osaka Metropolitan University,
	Osaka 558-8585, Japan 
	\& Institut de Math\'ematiques de Toulouse (IMT) UMR5219, 
	Universit\'e Toulouse III - Paul Sabatier, F-31062 Toulouse Cedex 9, France}
\email{tatsuya.hosono@omu.ac.jp}
	
\author{Philippe Lauren\c{c}ot}
\address{Laboratoire de Math\'ematiques (LAMA) UMR 5217, Universit\'e Savoie Mont Blanc, CNRS, F-73000 Chamb\'ery, France}
\email{philippe.laurencot@univ-smb.fr}
	
\keywords{global existence, boundedness, gradient flows, chemotaxis, time discrete scheme}
\subjclass{35K65 35K40 47J30 35B33}

\date{\today}

\begin{abstract}
Global existence and boundedness of weak solutions for a fully parabolic degenerate chemotaxis system with indirect signal production are proved for any initial data in the subcritical case and under smallness conditions in the critical and supercritical cases. To construct weak solutions, a time discrete scheme is set up, for which the first equation has a gradient flow structure with respect to the $2$-Wasserstein distance, while the other two equations feature an $L^2$-variational structure. The proof relies in particular on the flow interchange method and discrete maximal regularity.
\end{abstract}
%
\maketitle
%
%
%
\pagestyle{myheadings}
\markboth{\sc{T. Hosono \& Ph. Lauren\c cot}}{\sc{Degenerate chemotaxis system with indirect signal production}}

\section{Introduction}\label{sec.1}

Let $m>1$. Global existence of bounded weak solutions to the degenerate fully parabolic chemotaxis system with indirect signal production
\begin{equation}
\left\{
\begin{aligned}
	&\partial_t u = \mathrm{div}\left( \nabla u^m - u \nabla v \right),
	& t>0,\, &x\in \mathbb{R}^d,
	\\
	&\partial_t v = \Delta v - v +w,
	&t>0,\, &x\in \mathbb{R}^d,
	\\
	&\partial_t w = \Delta w - w +u,
	&t>0,\, &x\in \mathbb{R}^d,
	\\
	&(u,v,w)(0,x)=(u_{0},v_0,w_0)(x),
	&\, &x\in \mathbb{R}^d,
\end{aligned}
\right.
\label{eqn;DCI0}
\end{equation}
is established for suitable values of $m>1$ and $M=\|u_0\|_1>0$ depending on the space dimension~$d\ge 1$. Chemotaxis describes the directed movement of cells or microorganisms with density~$u$ in response to a chemical signal with concentration $v$ and has been extensively studied since the seminal work of Keller \& Segel \cite{KeSe1971}. While the classical Keller-Segel model assumes that the chemoattractant is produced directly by the cells, several variants have been proposed to account for more complex biological signaling pathways. Among them, chemotaxis systems with indirect signal production introduce an intermediate substance~$w$ through which the chemoattractant is generated, leading to the fully parabolic system~\eqref{eqn;DCI0}. As for the degenerate fully parabolic Keller-Segel system, dimensional analysis reveals that there is a critical value $m^*:=2(d-2)/d$ for~\eqref{eqn;DCI0}, which separates different dynamical behaviors: for subcritical values of $m$ ($m>m^*$), global existence is expected while finite time blowup is likely to occur for supercritical values of $m$ ($m<m^*$). Besides, an additional threshold phenomenon is expected in the critical case $m=m^*$, where it is the mass $M=\|u_0\|_1$ which governs the dynamics. Large values of $M$ lead to finite time blowup, while a sufficiently small mass guarantees global existence. Observe that $m^*>1$ if and only if $d\ge 5$ and linear diffusion $m=m^*=1$ is critical in space dimension $d=4$. This fact is confirmed in the seminal work by Fujie \& Senba \cite{FuSe2017}, where global existence of bounded solutions to~\eqref{eqn;DCI0} with $m=1$ (but in a bounded domain $\Omega$ with suitable boundary conditions) is shown when $d=4$ and $M\in \big(0,(8\pi)^2\big)$, as well as when $d\in\{1,2,3\}$ ($m^*<1$ when $d\le 3$). Moreover, the occurrence of finite or infinite time blowup is established for suitable initial data in \cite{FuSe2019} when $d=4$ and $M>(8\pi)^2$, while finite time blowup for suitable initial data, but without any lower bound on $M$, is obtained in \cite{MLL2025} for $d\ge 5$.  When $m=1$ and $d=4$, the Cauchy problem~\eqref{eqn;DCI0} is studied in \cite{HoLa2025} and the existence of a unique classical solution is shown for $M\in \big(0,(8\pi)^2\big)$, along with the boundedness of these solutions under the additional (non-optimal) constraint $M<(8\pi)^2/\sqrt{3}$. We also refer to \cite{Hos2026} for global existence and boundedness of classical solutions to the Cauchy problem~\eqref{eqn;DCI0} in low space dimensions $d\in\{1,2,3\}$. Less attention has been paid to the quasilinear case $m>1$: for the initial-boundary value problem~\eqref{eqn;DCI0} in a bounded domain, supplemented with homogeneous Neumann boundary conditions in the non-degenerate case where $u^m$ is replaced by $(1+u)^m$, global existence of bounded classical solutions is proved in \cite[Theorem~1.1]{DiWa2019} when $m>m^*$ in space dimension $d\ge 2$ and $m>-1$ when $d=1$. More recently, global existence of weak solutions to the Cauchy problem~\eqref{eqn;DCI0} has been established by Mimura \cite{Mim2024b} in the subcritical and critical cases $m\ge m^*$, assuming further that
\begin{equation}
	\;\text{ either }\; m\ge 2, \;\text{ or }\; m\in (1,2) \;\text{ and }\; m \ge \frac{3d}{2(d+1)}. \label{hyp_mimura}
\end{equation}
As pointed out in \cite{Mim2024b}, we have $3d/[2(d+1)]>m^*$ for $d\le 5$, so that the critical exponent $m^*$ does not satisfy~\eqref{hyp_mimura} when $d=5$, as well as an open interval of subcritical values when $1\le d\le 5$.

One aim of this work is to complete the analysis performed in \cite{Mim2024b} by showing that there is a global weak solution to~\eqref{eqn;DCI0} for all critical and subcritical values of $m>1$. Our results actually go far beyond this extension and provide improvements in two directions:  on the one hand, we show uniform boundedness of solutions in the critical and subcritical cases and derive additional regularity estimates on the solutions, including an $L^\infty$-estimate on the first component $u$. On the other hand, we also investigate the existence of global and bounded weak solutions in the supercritical case $m<m^*$, provided that $m>1$ lies above the Sobolev exponent $m_*:=2d/(d+4)$. 

Before stating our results, we rescale~\eqref{eqn;DCI0} in order to transform the initial condition $u_0$ to a probability measure. Then, the original mass $M$ appears as a parameter in the resulting system, which reads
\begin{equation}
	\left\{
	\begin{aligned}
		&\partial_t u = \mathrm{div}\left( M^{m-1} \nabla u^m - M u \nabla v \right),
		& t>0,\, &x\in \mathbb{R}^d,
		\\
		&\partial_t v = \Delta v - v +w,
		&t>0,\, &x\in \mathbb{R}^d,
		\\
		&\partial_t w = \Delta w - w +u,
		&t>0,\, &x\in \mathbb{R}^d,
		\\
		&(u,v,w)(0,x)=(u_{0},v_0,w_0)(x),
		&\, &x\in \mathbb{R}^d,
	\end{aligned}
	\right.
	\label{eqn;DCI}
\end{equation}
with non-negative initial conditions 
\begin{equation}
	(u_0,v_0,w_0)\in \mathcal{P}_2(\mathbb{R}^d)\times W_+^{2,2}(\mathbb{R}^d)\times L_+^2(\mathbb{R}^d), \label{hyp_CI}
\end{equation}
where $\mathcal{P}_2(\mathbb{R}^d)$ denotes the space of probability measures on $\mathbb{R}^d$ with finite second moment. Next, as~\eqref{eqn;DCI} features a degenerate diffusion, classical solutions are not likely to exist globally and we state below the definition of a weak solution to~\eqref{eqn;DCI} to be used in the sequel.

\begin{definition}\label{def.ws}
Let $m>1$, $M>0$ and consider initial data $(u_0,v_0,w_0)$ satisfying~\eqref{hyp_CI}, as well as
\begin{equation}
	u_0\in L^\infty(\mathbb{R}^d), \quad v_0\in W^{2,m}(\mathbb{R}^d)\cap W^{2,\infty}(\mathbb{R}^d), \quad w_0\in W^{2,m}(\mathbb{R}^d)\cap W^{2,\infty}(\mathbb{R}^d). \label{ph01}
\end{equation}
A global weak solution to~\eqref{eqn;DCI} is a triple $(u,v,w)$ of non-negative functions satisfying
\begin{equation*}
	u\in C\big([0,\infty),\mathcal{P}_{2}(\mathbb{R}^d)\big) , \quad v\in C\big([0,\infty),W_+^{1,2}(\mathbb{R}^d)\big), \quad w\in C\big([0,\infty),L_+^{2}(\mathbb{R}^d)\big),
\end{equation*}
along with
\begin{align*}
	& u\in L^2\big((0,T)\times\mathbb{R}^d\big), \quad \nabla u^m \in L^2\big((0,T)\times\mathbb{R}^d;\mathbb{R}^d\big), \\
	& v \in L^2\big((0,T),W^{2,2}(\mathbb{R}^d)\big), \quad w \in L^2\big((0,T),W^{2,2}(\mathbb{R}^d)\big),
\end{align*}
and, for any $\varphi\in C_c^\infty\big((0,\infty)\times\mathbb{R}^d\big)$, 
\begin{align*}
	\int_0^\infty \int_{\mathbb{R}^d} u \partial_t\varphi\ dxdt & = \int_0^\infty \int_{\mathbb{R}^d} \left[ \nabla u^m - u\nabla v \right] \cdot \nabla\varphi \ dxdt, \\
	\partial_t v & = \Delta v - v +w \;\text{ a.e. in }\; (0,\infty)\times\mathbb{R}^d,\\
	\partial_t w & = \Delta w - w +u \;\text{ a.e. in }\; (0,\infty)\times\mathbb{R}^d.
\end{align*}
\end{definition}

Let us point here that Definition~\ref{def.ws} requires slightly more regularity on $\nabla u^m$ than the definition of a weak solution used in \cite{Mim2024b} but this additional property of $\nabla u^m$ is actually derived in the course of the existence proof. The main result we present now is the existence of a global bounded weak solution to~\eqref{eqn;DCI} under appropriate conditions on $m>1$ and $M>0$, according to the range of the space dimension $d\ge 1$.

\begin{theorem}\label{thm.gbe}
Let $m>1$, $M>0$ and consider initial data $(u_0,v_0,w_0)$ satisfying~\eqref{hyp_CI} and~\eqref{ph01}. Then~\eqref{eqn;DCI} has a global weak solution which satisfies additionally 
\begin{equation}
	u\in L^\infty\big((0,\infty)\times\mathbb{R}^d)\big), \quad v\in L^\infty\big((0,\infty),W^{2,\infty}(\mathbb{R}^d)\big), \quad w\in L^\infty\big((0,\infty),W^{1,\infty}(\mathbb{R}^d)\big) \label{Linfty}
\end{equation} 
in the following cases:
\begin{itemize}
	\item [\textbf{(m1)}] $d\ge 5$ and $m>m^* = (2d-4)/d$ or $1\le d\le 4$ (subcritical case);
	\item [\textbf{(m2)}] $d\ge 5$, $m=m^*$ and $M\in \big(0,M^*\big)$, where the threshold mass $M^*$ is related to the optimal constant in a functional inequality and is given by $M^*=(m^*-1)K_{m^*}^2/2$ with
	\begin{equation*}
		K_{m^*}^2 = \sup_{u\in (L_+^1\cap L^{m^*})(\mathbb{R}^d), \ \|u\|_1=1} \frac{\displaystyle{\int_{\mathbb{R}^d} u (-\Delta)^{-2}u \ dx}}{\|u\|_{m^*}^{m^*}},
	\end{equation*} 
	see~\eqref{ph16}, \eqref{ph17} and~\eqref{ph19ba} (critical case); 
	\item [\textbf{(m3)}] $d\ge 5$, $m\in (m_*,m^*)$ and $\|u_0\|_m^m < z_m$, where $z_m>0$ depends only on $d$, $m$ and $M$ and is defined by~\eqref{ph19c}. In that case, $\|u(t)\|_m\le \|u_0\|_m$ for $t\ge 0$ (supercritical case).
\end{itemize}
\end{theorem}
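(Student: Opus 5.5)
The plan is to construct the solution as a limit of a time-discrete minimizing movement scheme. For a time step $\tau>0$ and a previous iterate $(u_{n-1},v_{n-1},w_{n-1})$, I define $u_n$ as a minimizer over $\mathcal{P}_2(\mathbb{R}^d)$ of
\begin{equation*}
u\mapsto \frac{1}{2\tau}W_2^2(u,u_{n-1}) + \frac{M^{m-1}}{m-1}\int_{\mathbb{R}^d} u^m\,dx - M\int_{\mathbb{R}^d} u v_{n-1}\,dx,
\end{equation*}
which is a JKO step for the first equation with frozen signal $v_{n-1}$. I then define $w_n$ and $v_n$ as minimizers of the $L^2$-functionals
\begin{equation*}
\frac{1}{2\tau}\|w-w_{n-1}\|_2^2+\frac12\|\nabla w\|_2^2+\frac12\|w\|_2^2-\int_{\mathbb{R}^d} u_n w\,dx
\end{equation*}
and the analogous functional for $v$ with $w_n$ as the source. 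Equivalently, these are implicit Euler steps $(I-\tau(\Delta-1))w_n=w_{n-1}+\tau u_n$, and likewise for $v_n$. Existence of minimizers is standard, using lower semicontinuity, the Carleman-type bound from the second moment, and $v_{n-1}\in W^{2,\infty}$. The Euler–Lagrange equation of the JKO step gives $u_n^m\in W^{1,1}_{loc}$ together with the discrete weak formulation, with the usual $O(\tau)$ error term.

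The core of the proof is a priori estimates that are uniform in $\tau$ and $n$. I use the flow interchange method: I compare the JKO functional along the heat flow and along the porous-medium flow $\partial_s\rho=\Delta\rho^{p}$, whose displacement convexity yields discrete dissipation inequalities. Testing with the $L^p$-type entropies $\frac{1}{p-1}\int u^p$ gives
\begin{equation*}
\frac{\|u_n\|_p^p-\|u_{n-1}\|_p^p}{\tau}+c\int|\nabla u_n^{(m+p-1)/2}|^2\le C\int u_n^{p}\,(-\Delta v_{n-1})_+\,dx .
\end{equation*}
The right-hand side is controlled by discrete maximal regularity for the resolvent iterations, which bounds $\sum_n\tau\|\Delta v_n\|_q^q$ by $\sum_n\tau\|\Delta w_n\|_{q}^q$. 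This is in turn bounded, after a second maximal-regularity step, by $\sum\tau\|u_n\|_{q'}$ in suitable Sobolev spaces, since $v\approx(I-\Delta)^{-2}u$ gains four derivatives.

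The three cases of Theorem~\ref{thm.gbe} then come from closing the $p=m$ estimate via a Gagliardo–Nirenberg/Hardy–Littlewood–Sobolev argument. In case (m1) the exponent is subcritical, so Young's inequality absorbs the right-hand side for any mass. In case (m2) the optimal constant $K_{m^*}$ from \eqref{ph16}–\eqref{ph19ba} gives absorption exactly when $M<M^*$. In case (m3), with $m>m_*$, I obtain a differential inequality of the form $y_n-y_{n-1}\le \tau(-a\,y_n^{\alpha}+b\,y_n^{\beta})$ with $\beta>\alpha$, and smallness $y_0<z_m$ propagates $y_n\le y_0$, giving $\|u(t)\|_m\le\|u_0\|_m$. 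Bootstrapping in $p$ by a discrete Moser/Alikakos iteration then yields a uniform $L^\infty$ bound on $u_n$. Maximal regularity and the embedding $W^{2,q}\hookrightarrow W^{1,\infty}$ for large $q$ then give the bounds in \eqref{Linfty} for $w_n$ and $v_n$; for $v$ in $W^{2,\infty}$ I use the smoothing of two resolvents together with \eqref{ph01}.

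For passage to the limit, I use the piecewise constant interpolants. The $W_2$ equicontinuity estimate $\sum W_2^2(u_n,u_{n-1})/\tau\le C$, the $L^2$ bound on $\nabla u^m$, and the time-derivative bounds for $v,w$ allow a refined Aubin–Lions compactness argument (Rossi–Savaré type) to give strong $L^2_{loc}$ convergence of $u$. The limit of $u\nabla v$ is handled by the strong convergence of $\nabla v$. The $L^\infty$ bounds pass to the limit by weak-$*$ lower semicontinuity.

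I expect the main obstacle to be the flow interchange estimate coupled with discrete maximal regularity in the critical case. The explicit threshold $M^*$ must survive discretization without loss, which requires both the time lag $v_{n-1}$ versus $u_n$ and the two-step resolvent structure to be handled sharply. My first approach is to use a Lyapunov-type discrete functional mixing $\int u^m$ and $\int u v$, and to control the lag terms by the $O(\tau)$ dissipation.
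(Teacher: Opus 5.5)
You have the scheme right: up to an index shift it is the paper's scheme. Your flow-interchange inequality is essentially \eqref{ph29}, and the Moser step and the compactness argument are in line with the paper.

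The genuine gap is the base $L^m$ estimate, on which everything else rests. You propose to close the $p=m$ flow-interchange inequality by Gagliardo--Nirenberg/HLS, getting $M^*$ in (m2) and a scalar recursion $y_n-y_{n-1}\le\tau(-a y_n^{\alpha}+b y_n^{\beta})$ in (m3). This cannot work, for three reasons:
\begin{itemize}
\item The right-hand side $\int u_n^m(-\Delta v_{n-1})_+\,dx$ is not a function of $y_n=\|u_n\|_m^m$. Indeed $v_{n-1}$ depends on the whole past of $(u_j)$ through two implicit Euler steps, so no scalar recursion is available.
\item Once you route that term through discrete maximal regularity the constants are not sharp, so $M^*$ cannot come out. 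Moreover $K_{m^*}$ is the optimal constant of an inequality for $\int u(-\Delta)^{-2}u$, a quantity that never appears in the dissipation inequality.
\item Every interpolation step in the flow-interchange argument uses $\|u_n\|_m$ as the base norm (hypothesis \eqref{ph23} of Lemma~\ref{lemds04}), so it presupposes the bound you want.
\end{itemize}

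The missing idea is the exact discrete Liapunov inequality \eqref{ph09} for
$\mathcal{L}[u,v,w]=\mathcal{E}[u,v]/M+\frac12\|v-\Delta v\|_2^2+\frac12\|w-v+\Delta v\|_2^2$.
The quadratic terms in $v$ and $w$ are indispensable; a functional mixing only $\int u^m$ and $\int uv$ is not a Liapunov functional here. Also, contrary to your last paragraph, there are no lag errors to control. You compare the JKO value with the previous density, then substitute the $w$-equation (whose source is exactly the density paired with $v_n-v_{n-1}$) and then the $v$-equation. Every term then becomes a telescoping square plus the nonnegative dissipations $\mathcal{D}_0,\mathcal{D}_1$, so the threshold survives discretization with no loss. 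The lower bound comes from minimizing $-\int uv+\frac12\|\Delta v\|_2^2$ in $v$. This yields $-\frac12\int u(-\Delta)^{-2}u\ge-\frac{K_m^2}{2}\|u\|_m^{m(d-4)/[d(m-1)]}$, hence $\mathcal{L}\ge f_m(\|u\|_m^m)$ as in \eqref{ph18}. That is where $K_{m^*}$, $M^*$ and $z_m$ originate.

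In case (m3) your unconstrained scheme lacks a second device. The function $f_m$ increases only on $[0,z_m]$ and tends to $-\infty$. So $f_m(\|u_n\|_m^m)\le\mathcal{L}[u_0,v_0,w_0]<f_m(\theta z_m)$ does not rule out a single JKO step landing on the far branch $\|u_n\|_m^m>z_m$, and no continuity argument is available at the discrete level. The paper therefore minimizes over $\mathcal{U}_m=\{u\in(\mathcal{P}_2\cap L^m)(\mathbb{R}^d):\|u\|_m^m\le z_m\}$. Monotonicity of $f_m$ on $[0,z_m]$ then gives $\|u_n\|_m^m\le\theta z_m<z_m$, so the constraint is inactive and the Euler--Lagrange equation is unaffected. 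The porous-medium flow used in the flow interchange stays in $\mathcal{U}_m$ because it decreases $\|\cdot\|_m$.

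Finally, your unweighted maximal-regularity sums $\tau\sum_n\|\cdot\|^q$ give bounds that grow with $N\tau$, so they cannot deliver the time-uniform bounds in \eqref{Linfty}. The paper instead uses sums weighted by $(1+\tau)^{q_0 n}$, i.e. maximal regularity for $(1+\tau)^n v_n$ and $(1+\tau)^n w_n$. This yields \eqref{ph24} uniformly in $N$.
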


As already mentioned, existence of a global weak solution to~\eqref{eqn;DCI} in the subcritical case~\textbf{(m1)} and the critical case~\textbf{(m2)} is shown in \cite{Mim2024b} but only for $d\ge 6$. Theorem~\ref{thm.gbe} thus extends the existence result to the full subcritical and critical ranges as described in~\textbf{(m1)} and~\textbf{(m2)} and also provides the boundedness~\eqref{Linfty} of the solution. Besides, we obtain in~\textbf{(m3)} the existence of a global bounded weak solution in the supercritical case $m\in (m_*,m^*)$ under a smallness assumption on $\|u_0\|_m$. The outcome of~\textbf{(m3)} is perfectly consistent with (and actually inspired by) similar results already available for the degenerate Keller-Segel system \cite{CLW2012, ChWa2014, KNO2015, LiWa2026, Mim2024a, Oga2011}.

The starting point of the analysis is the availability of a Liapunov functional $\mathcal{L}$ for~\eqref{eqn;DCI}, which is given by
\begin{equation*}
	\mathcal{L}[u,v,w] := \frac{M^{m-2}}{m-1} \|u\|_m^m - \int_{\mathbb{R}^d} uv\ dx + \frac{\|v-\Delta v\|_2^2}{2} + \frac{\|w-v+\Delta v\|_2^2}{2}.
\end{equation*}
In view of the negative term in $\mathcal{L}$, which could be the dominating one, the functional $\mathcal{L}$ may be unbounded from below according to the values of $d$ and $m$, a feature which could lead to finite or infinite time singularities. The critical exponent $m^*$ is deeply connected with the boundedness from below of $\mathcal{L}$, as shown in \cite[Section~4]{Mim2024b}. Specifically, $\mathcal{L}$ is bounded from below for all $M>0$ in the subcritical range $m>\max\{1,m^*\}$, while this property only holds true for sufficiently small values of~$M$ for the critical exponent $m=m^*$, the threshold parameter $M^*$ being identified according to this criterion. A useful consequence of the boundedness from below of $\mathcal{L}$ is a control on the $L^m$-norm of $u$, as well as on the $W^{2,2}$- norm of $v$ and the $L^2$-norm of $w$. On the one hand, these estimates are the first step towards the derivation of further estimates, culminating eventually in $L^\infty$-estimates on~$(u,v,w)$. On the other hand, such a lower bound on $\mathcal{L}$ is not available in the supercritical range and the derivation of an estimate on the $L^m$-norm of $u$ requires a smallness condition on this norm. In fact, in view of $m<m^*<2$, one can control the quadratic negative term by $\|u\|_m^m$, provided the latter is sufficiently small. 

Let us now describe the proof of Theorem~\ref{thm.gbe}. While the classical and degenerate Keller-Segel systems have a gradient flow structure \cite{BCKKLL2015, BlLa2013, Mim2017}, the more intricate coupling arising from the indirect signal production mechanism seems to prevent the system~\eqref{eqn;DCI} from possessing such a structure, even though it has a Liapunov functional as already mentioned. However, as noticed in \cite{Mim2024b}, there is a suitable time discrete scheme of~\eqref{eqn;DCI} for which each equation has a variational structure, which turns out to be a very convenient approach to construct solutions, see also \cite{HTZ2025, Mim2017} for a similar method on other chemotaxis models. However, this time discrete scheme leads to a variational structure with respect to the $2$-Wasserstein distance on $\mathcal{P}_2(\mathbb{R}^d)$ for the first equation, which is combined with a classical $L^2$-variational structure for the other two equations, and it is well-known that handling the former requires some care. Nevertheless, we shall use the same time discrete scheme  here as well but, in contrast to \cite{Mim2024b},  we first establish regularity properties of the first component of the solutions to the discrete scheme by exploiting its minimizing properties, before deriving the corresponding Euler-Lagrange equation and showing the convergence. Proceeding in that way allows us in particular to employ the flow interchange technique developed in \cite{MMS2009}, along with discrete maximal regularity results \cite{APW2002, KLL2016}, to obtain additional integrability properties of the discrete approximation of $u$ and additional Sobolev regularity on the discrete approximations of $v$ and $w$. In turn, these improvements are instrumental in order to handle the full subcritical and critical ranges~\textbf{(m1)-(m2)} and to cope with the supercritical range~\textbf{(m3)}. In particular, they pave the way towards the boundedness of the discrete approximation of $u$, which is achieved here by a Moser's technique. The core of the paper is actually the derivation of several new estimates on the discrete approximations which are gathered in Theorem~\ref{thm.est} below. Let us also emphasize two salient features of our proof: on the one hand, the bounds obtained on the constructed weak solutions to~\eqref{eqn;DCI} are also available for the time discrete approximations and are valid uniformly with respect to the time step. On the other hand, the derivation of these bounds at the discrete level is based upon ``time'' weighted estimates, which are somewhat reminiscent of those used in \cite[Proposition~3.2]{IsYo2020} to prove the boundedness of weak solutions to the degenerate Keller-Segel system.

The next section is devoted to the time discrete scheme, which is introduced in~\eqref{ph02} and its properties. We begin with the derivation of the Liapunov functional at the discrete level, see Section~\ref{sec.2.1}, which is done along the lines of \cite[Proposition~3.1]{Mim2024b}. The derivation of a lower bound of the Liapunov functional involving the $L^m$-norm is next performed in Section~\ref{sec.2.2}, the proof depending heavily on the value of $m$, and the $L^m$-bound on $u$ is stated in Section~\ref{sec.2.3}. $L^\rho$-estimates on the discrete approximation of $u$ for $\rho\in (m,\infty)$ are the subject of Section~\ref{sec.2.4}, where the flow interchange method and discrete maximal regularity are used. Section~\ref{sec.2.5} is devoted to $L^\infty$-estimates, first on $w$, $\Delta v$, $\nabla w$ and $\nabla\Delta v$, and then on $u$, these estimates being collected in Theorem~\ref{thm.est}. Convergence of the discrete approximations is shown in Section~\ref{sec.3}, and Theorem~\ref{thm.gbe} as well, but we only provide a sketch of the proof and omit the details, since the arguments are very close to \cite[Sections~5-7]{Mim2024b}. We finally gather in the appendix several regularity properties of the solutions to the time discrete heat equation, including consequences of discrete maximal regularity, which are used in Section~\ref{sec.2}. 

\bigskip

For convenience, we recall below the definition of the critical values of $m$, which play an important role in the analysis.
\begin{equation*}
	m_*:= \frac{2d}{d+4} \;\;\text{ and}\;\; m^* := 2 - \frac{4}{d} \;\;\text{ for } d\ge 1. 
\end{equation*}
In particular, $m^*>m_*>1$ for $d\ge 5$, $m^*=m_*=1$ for $d=4$ and $m^*<m_*<1$ for $1\le d\le 3$.

\section{A discrete scheme}\label{sec.2}

The proof of Theorem~\ref{thm.gbe} is based on the minimizing movement scheme introduced in \cite{Mim2024b}. Although we employ the same scheme, our approach differs from that in \cite{Mim2024b}, as we first derive new discrete integrability estimates for the first component $u$ by combining the flow interchange technique with discrete maximal regularity, and then identify the corresponding Euler-Lagrange equations. These estimates are uniform with respect to the time step and provide the key ingredients for establishing both the global existence and the boundedness of weak solutions, which will be shown in the next section. We begin by introducing the admissible spaces in which the successive minimization problems are posed.

\begin{align*}
	X_0 & := \left\{ (u,v,w)\in (L_+^1\cap L^m)(\mathbb{R}^d) \times W_+^{2,2}(\mathbb{R}^d)\times L_+^2(\mathbb{R}^d)\ :\ \|u\|_1=1 \right\}, \\
	X_2 & := \left\{ (u,v,w)\in (\mathcal{P}_2\cap L^m)(\mathbb{R}^d) \times W_+^{2,2}(\mathbb{R}^d)\times L_+^2(\mathbb{R}^d) \right\} \subset X_0, \\
	Y_0 & := \left\{ (u,v)\in (L_+^1\cap L^m)(\mathbb{R}^d) \times \dot{W}_+^{2,2}(\mathbb{R}^d)\ :\ \|u\|_1=1 \right\}, 
\end{align*}
and
\begin{equation*}
	\mathcal{U}_m := \left\{
	\begin{array}{ll}
		(\mathcal{P}_2\cap L^m)(\mathbb{R}^d) & \;\;\text{ if }\;\; m\ge m^*, \\[1ex]
		\left\{ u\in (\mathcal{P}_2\cap L^m)(\mathbb{R}^d)\ :\ \|u\|_m \le z_m^{1/m} \right\} & \;\;\text{ if }\;\; m\in \big(m_*,m^*\big),
	\end{array} \right.
\end{equation*}
where $z_m>0$ is defined in~\eqref{ph19c} below. 

Let $\tau\in (0,1)$. Arguing as in \cite[Proposition~2.2]{Mim2024b}, the sequences $(u_n^\tau,v_n^\tau,w_n^\tau)_{n\ge 1}$ defined inductively by
\begin{subequations}\label{ph02}
\begin{align}
	w_n^\tau & = \argmin_{w\in W^{1,2}(\mathbb{R}^d)} \left\{ \frac{\big\|w-w_{n-1}^\tau\big\|_2^2}{2\tau} + \frac{\|\nabla w\|_2^2 + \|w\|_2^2}{2} - \int_{\mathbb{R}^d} w u_{n-1}^\tau\ dx\right\}, \label{ph02a} \\
	v_n^\tau & = \argmin_{v\in W^{1,2}(\mathbb{R}^d)} \left\{ \frac{\big\|v-v_{n-1}^\tau\big\|_2^2}{2\tau} + \frac{\|\nabla v\|_2^2 + \|v\|_2^2}{2} - \int_{\mathbb{R}^d}  v w_{n}^\tau\ dx\right\}, \label{ph02b} \\
	u_n^\tau & = \argmin_{u\in \mathcal{U}_m} \left\{ \frac{\mathcal{W}_2^2\big(u,u_{n-1}^\tau\big)}{2\tau} + \mathcal{E}\big[u,v_n^\tau\big]\right\},  \label{ph02c} 
 \end{align}
with
\begin{equation}
	\mathcal{E}[u,v] := \frac{M^{m-1}}{m-1} \|u\|_m^m - M \int_{\mathbb{R}^d} uv\ dx, \label{ph02d}
\end{equation}
\end{subequations}
are well-defined, where $\mathcal{W}_2$ denotes the $2$-Wasserstein distance on $\mathcal{P}_2(\mathbb{R}^d)$. In addition, for each $n\ge 1$,
\begin{equation*}
	\big( u_n^\tau, v_n^\tau, w_n^\tau \big) \in L^2(\mathbb{R}^d)\times W^{4,2}(\mathbb{R}^d)\times W^{2,2}(\mathbb{R}^d), 
\end{equation*}
and it follows from~\eqref{ph02b} and~\eqref{ph02a} that $v_n^\tau$ and $w_n^\tau$ solve
\begin{subequations}\label{ph04}
\begin{align}
	\frac{v_n^\tau - v_{n-1}^\tau}{\tau} - \Delta v_n^\tau + v_n^\tau = w_n^\tau \;\;\text{ in }\;\; \mathbb{R}^d, \label{ph04a} \\
	\frac{w_n^\tau - w_{n-1}^\tau}{\tau} - \Delta w_n^\tau + w_n^\tau = u_{n-1}^\tau \;\;\text{ in }\;\; \mathbb{R}^d, \label{ph04b}
\end{align}
\end{subequations}
see \cite[Proposition~2.2]{Mim2024b}. 

Since $\tau\in (0,1)$ is fixed throughout this section, we drop from now on the superscript $\tau$ from the notation $(u_n^\tau,v_n^\tau,w_n^\tau)_{n\ge 1}$. We now turn to the derivation of estimates on $(u_n,v_n,w_n)_{n\ge 1}$ and begin with an estimate in $L^m(\mathbb{R}^d)\times W^{2,2}(\mathbb{R}^d)\times L^2(\mathbb{R}^d)$. 

\subsection{A Liapunov functional}\label{sec.2.1}

As a preliminary step, we study the behavior of the Liapunov functional
\begin{equation}
	\mathcal{L}[u,v,w] := \frac{\mathcal{E}[u,v]}{M} + \frac{\|v-\Delta v\|_2^2}{2} + \frac{\|w-v+\Delta v\|_2^2}{2}, \label{ph08}
\end{equation}
which is well-defined for $(u,v,w)\in X_0$.

\begin{lemma}\label{lemds01}
For $n\ge 1$, 
\begin{equation}
\begin{split}
	\mathcal{L}\big[u_n,v_n,w_n\big] + \mathcal{D}_0\big[u_n,u_{n-1},v_n,v_{n-1}\big] & + \mathcal{D}_1\big[v_n,v_{n-1},w_n,w_{n-1}\big] \\
	& \le  \mathcal{L}\big(u_{n-1},v_{n-1},w_{n-1}\big),
\end{split}\label{ph09}
\end{equation}
where
\begin{align*}
	\mathcal{D}_0\big[u_n,u_{n-1},v_n,v_{n-1}\big] & =  \frac{\mathcal{W}_2^2\big(u_n,u_{n-1}\big)}{2M\tau} + 2 \frac{\|\nabla(v_n-v_{n-1})\|_2^2}{\tau} + 2 \frac{\|v_n-v_{n-1}\|_2^2}{\tau} \ge 0, \\
	\mathcal{D}_1\big[v_n,v_{n-1},w_n,w_{n-1}\big] & = \frac{\|w_n-w_{n-1} - v_n + v_{n-1} + \Delta v_n - \Delta v_{n-1}\|_2^2}{2} \\
	& \hspace{3cm} + \frac{\|v_n - v_{n-1} - \Delta v_n + \Delta v_{n-1}\|_2^2}{2} \ge 0.
\end{align*}
\end{lemma}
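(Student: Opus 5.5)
The plan is to combine the minimality of $u_n$ in~\eqref{ph02c} with an exact energy identity for the linear equations~\eqref{ph04}. Throughout, write $A:=I-\Delta$, $\delta v_n := v_n-v_{n-1}$ and $z_n := w_n - A v_n = w_n - v_n + \Delta v_n$, so that $\mathcal{L}[u,v,w] = \mathcal{E}[u,v]/M + \|Av\|_2^2/2 + \|w-Av\|_2^2/2$.

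\textbf{Step 1 (minimality of $u_n$).} Since $u_{n-1}\in\mathcal{U}_m$, it is an admissible competitor in~\eqref{ph02c}. Hence
\begin{equation*}
\frac{\mathcal{W}_2^2(u_n,u_{n-1})}{2\tau} + \mathcal{E}[u_n,v_n] \le \mathcal{E}[u_{n-1},v_n] = \mathcal{E}[u_{n-1},v_{n-1}] - M\int_{\mathbb{R}^d} u_{n-1}\,\delta v_n\ dx .
\end{equation*}
Dividing by $M$, it remains to show that $\int u_{n-1}\delta v_n\,dx$ equals the decrease of the quadratic part of $\mathcal{L}$, plus the $v$-dissipation in $\mathcal{D}_0$, plus $\mathcal{D}_1$.

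\textbf{Step 2 (rewriting the coupling term).} Equation~\eqref{ph04a} reads $w_n = \delta v_n/\tau + A v_n$, that is, $z_n = \delta v_n/\tau$. Equation~\eqref{ph04b} reads $u_{n-1} = (w_n-w_{n-1})/\tau + A w_n$. Therefore
\begin{equation*}
\int u_{n-1}\delta v_n\,dx = \int (w_n-w_{n-1})\, z_n\,dx + \tau\int A w_n\, z_n\,dx .
\end{equation*}

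For the first integral, write $w_n-w_{n-1} = (z_n-z_{n-1}) + A\delta v_n$; note this only uses the definition of $z_{n-1}$, so it is valid also for $n=1$. The identity $a(a-b) = \tfrac12(a^2-b^2+(a-b)^2)$ gives
\begin{equation*}
\int (z_n-z_{n-1})\,z_n\,dx = \tfrac12\left(\|z_n\|_2^2-\|z_{n-1}\|_2^2+\|z_n-z_{n-1}\|_2^2\right).
\end{equation*}
Since $z_n-z_{n-1}$ is exactly the function appearing in the first term of $\mathcal{D}_1$, this accounts for the $w$-part of $\mathcal{L}$ and the first half of $\mathcal{D}_1$. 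Also
\begin{equation*}
\int A\delta v_n\, z_n\,dx = \frac{\|\nabla\delta v_n\|_2^2+\|\delta v_n\|_2^2}{\tau}.
\end{equation*}

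For the second integral, write $w_n = z_n + Av_n$. Then $\tau\int A z_n\, z_n\,dx$ gives the same quantity $(\|\nabla\delta v_n\|_2^2+\|\delta v_n\|_2^2)/\tau$ a second time; together with the previous one this produces the factor $2$ in $\mathcal{D}_0$. The remaining piece is
\begin{equation*}
\tau\int A^2 v_n\, z_n\,dx = \int Av_n\, A\delta v_n\,dx = \tfrac12\left(\|Av_n\|_2^2-\|Av_{n-1}\|_2^2+\|A\delta v_n\|_2^2\right),
\end{equation*}
which yields the $\|v-\Delta v\|_2^2$ part of $\mathcal{L}$ and the second half of $\mathcal{D}_1$.

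\textbf{Step 3 (conclusion).} Inserting these identities into Step 1 and rearranging gives~\eqref{ph09}. In fact this yields an inequality whose only loss comes from Step 1.

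The only delicate point is justifying the integrations by parts and the self-adjointness of $A$ on the relevant functions. This requires $v_n\in W^{4,2}(\mathbb{R}^d)$ and $w_n\in W^{2,2}(\mathbb{R}^d)$ for $n\ge1$, together with $v_0\in W^{2,2}(\mathbb{R}^d)$ and $w_0\in L^2(\mathbb{R}^d)$ at the step $n=1$, and $u_{n-1}\in L^2(\mathbb{R}^d)$. These properties follow from the regularity stated after~\eqref{ph02} and from~\eqref{hyp_CI}--\eqref{ph01}. With them, every pairing above involves only $L^2$ functions, and the one pairing needing an extra derivative, $\int A^2v_n z_n\,dx = \int Av_n\,Az_n\,dx$, is legitimate because $z_n = \delta v_n/\tau\in W^{2,2}(\mathbb{R}^d)$, since $v_0\in W^{2,2}(\mathbb{R}^d)$ and $v_n\in W^{4,2}(\mathbb{R}^d)$ for $n\ge1$.
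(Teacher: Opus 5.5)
Your proof is correct and follows the same route as the paper. You test the minimality of $u_n$ against $u_{n-1}$, substitute \eqref{ph04b} for $u_{n-1}$, and then use \eqref{ph04a} together with the polarization identity, so that the $(v,w)$-part becomes an exact identity. Your notation $A=I-\Delta$ and $z_n=w_n-Av_n=(v_n-v_{n-1})/\tau$ packages the paper's three-term computation more compactly, and working with $z_{n-1}$ directly avoids the auxiliary $v_{-1}$ that the paper introduces for $n=1$.
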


\begin{proof}
We proceed along the lines of \cite[Proposition~3.1]{Mim2024b} and first infer from~\eqref{ph02c} (with $u=u_{n-1}$) that
\begin{equation*}
	\frac{\mathcal{W}_2^2\big(u_n,u_{n-1}\big)}{2\tau} + \mathcal{E}[u_n,v_n] \le \mathcal{E}[u_{n-1},v_n],
\end{equation*}
from which we deduce that
\begin{equation}
	 \mathcal{E}[u_n,v_n] + \frac{\mathcal{W}_2^2\big(u_n,u_{n-1}\big)}{2\tau} \le \mathcal{E}[u_{n-1},v_{n-1}] - M \int_{\mathbb{R}^d} u_{n-1} (v_n-v_{n-1})\ dx. \label{ph10}
\end{equation}
Next, by~\eqref{ph04b},
\begin{equation}
	- \int_{\mathbb{R}^d} u_{n-1} (v_n-v_{n-1})\ dx =- \int_{\mathbb{R}^d} (v_n-v_{n-1}) \left( \frac{w_n-w_{n-1}}{\tau} - \Delta w_{n} + w_n \right)\ dx. \label{ph11}
\end{equation}
It first follows from~\eqref{ph04a} that
\begin{align*}
	- \int_{\mathbb{R}^d} w_{n} (v_n-v_{n-1})\ dx & = - \frac{\|v_n-v_{n-1}\|_2^2}{\tau} - \int_{\mathbb{R}^d} \nabla(v_n-v_{n-1})\cdot \nabla v_n\ dx \\
	& \qquad - \int_{\mathbb{R}^d} (v_n-v_{n-1}) v_n\ dx,
\end{align*}
whence, since $\mathbf{Y}\cdot (\mathbf{Y}-\mathbf{X}) = \big[ |\mathbf{Y}|^2 - |\mathbf{X}|^2 + |\mathbf{Y}-\mathbf{X}|^2 \big]/2$,
\begin{equation}
\begin{split}
	- \int_{\mathbb{R}^d} w_{n} (v_n-v_{n-1})\ dx & = - \frac{\|v_n-v_{n-1}\|_2^2}{\tau} - \frac{\|\nabla v_n\|_2^2 + \|v_n\|_2^2}{2} \\
	& \qquad + \frac{\|\nabla v_{n-1}\|_2^2 + \|v_{n-1}\|_2^2}{2} - \frac{\|\nabla(v_n-v_{n-1})\|_2^2}{2} \\
	& \qquad - \frac{\|v_n-v_{n-1}\|_2^2}{2}. 
\end{split}\label{ph12}
\end{equation}
Similarly, by~\eqref{ph04a},
\begin{align*}
	\int_{\mathbb{R}^d} \Delta w_{n} (v_n-v_{n-1})\ dx & = - \int_{\mathbb{R}^d} \nabla w_{n} \cdot \nabla (v_n-v_{n-1})\ dx \\
	& = - \int_{\mathbb{R}^d} \left( \frac{\nabla (v_n-v_{n-1})}{\tau} - \nabla\Delta v_n + \nabla v_n \right) \cdot \nabla (v_n-v_{n-1})\ dx \\
	& = -  \frac{\|\nabla(v_n-v_{n-1})\|_2^2}{\tau} - \int_{\mathbb{R}^d} \Delta v_n \Delta(v_n-v_{n-1})\ dx \\
	& \qquad - \int_{\mathbb{R}^d} \nabla v_n \cdot \nabla(v_n-v_{n-1})\ dx,
\end{align*}
so that
\begin{equation}
\begin{split}
	\int_{\mathbb{R}^d} \Delta w_{n} (v_n-v_{n-1})\ dx & = -  \frac{\|\nabla(v_n-v_{n-1})\|_2^2}{\tau} - \frac{\|\Delta v_n\|_2^2 + \|\nabla v_n\|_2^2}{2} \\
	& \qquad + \frac{\|\Delta v_{n-1}\|_2^2 + \|\nabla v_{n-1}\|_2^2}{2} - \frac{\|\Delta(v_n-v_{n-1})\|_2^2}{2} \\
	& \qquad - \frac{\|\nabla(v_n-v_{n-1})\|_2^2}{2}. 
\end{split}\label{ph13}
\end{equation}
Finally, setting $v_{-1} := (1+\tau)v_0 - \tau \Delta v_0 - \tau w_0$ and using again~\eqref{ph04a}, we obtain
\begin{align*}
	& - \int_{\mathbb{R}^d} \frac{w_n-w_{n-1}}{\tau} (v_n-v_{n-1})\ dx \\
	& \quad = - \frac{1}{\tau} \int_{\mathbb{R}^d} \left[ \frac{v_n-v_{n-1}}{\tau} - \Delta v_n + v_n - \frac{v_{n-1}-v_{n-2}}{\tau}  +\Delta v_{n-1} -v_{n-1} \right] (v_n-v_{n-1})\ dx \\
	& \quad = - \frac{1}{\tau} \int_{\mathbb{R}^d} \frac{(v_n-v_{n-1}) - (v_{n-1}-v_{n-2})}{\tau} (v_n-v_{n-1})\ dx \\
	& \quad\qquad - \frac{\|\nabla(v_n-v_{n-1})\|_2^2}{\tau} - \frac{\|v_n-v_{n-1}\|_2^2}{\tau},
\end{align*}
whence
\begin{equation}
\begin{split}
	& - \int_{\mathbb{R}^d} \frac{w_n-w_{n-1}}{\tau} (v_n-v_{n-1})\ dx \\
	& \quad = - \frac{\|v_n-v_{n-1}\|_2^2}{2\tau^2} + \frac{\|v_{n-1}-v_{n-2}\|_2^2}{2\tau^2} - \frac{\|v_n+v_{n-2}-2v_{n-1}\|_2^2}{2\tau^2} \\
	& \qquad - \frac{\|\nabla(v_n-v_{n-1})\|_2^2}{\tau} - \frac{\|v_n-v_{n-1}\|_2^2}{\tau}. 
\end{split}\label{ph14}
\end{equation}
Collecting~\eqref{ph10}, \eqref{ph11}, \eqref{ph12}, \eqref{ph13} and~\eqref{ph14}, we find
\begin{align*}
	\frac{\mathcal{E}[u_n,v_n]}{M} + \frac{\mathcal{W}_2^2(u_n,u_{n-1})}{2M\tau} & \le \frac{\mathcal{E}[u_{n-1},v_{n-1}]}{M} - \frac{\|v_n-v_{n-1}\|_2^2}{2\tau^2} + \frac{\|v_{n-1}-v_{n-2}\|_2^2}{2\tau^2} \\ 
	& \qquad - \frac{\|v_n+v_{n-2}-2v_{n-1}\|_2^2}{2\tau^2} - 2 \frac{\|\nabla(v_n-v_{n-1})\|_2^2}{\tau} \\
	& \qquad - 2 \frac{\|v_n-v_{n-1}\|_2^2}{\tau} - \frac{\|\Delta v_n\|_2^2 + \|\nabla v_n\|_2^2}{2} \\
	& \qquad + \frac{\|\Delta v_{n-1}\|_2^2 + \|\nabla v_{n-1}\|_2^2}{2} - \frac{\|\Delta(v_n-v_{n-1})\|_2^2}{2} \\
	& \qquad - \|\nabla(v_n-v_{n-1})\|_2^2 - \frac{\|\nabla v_n\|_2^2 + \|v_n\|_2^2}{2} \\
	& \qquad + \frac{\|\nabla v_{n-1}\|_2^2 + \|v_{n-1}\|_2^2}{2} - \frac{\|v_n-v_{n-1}\|_2^2}{2}.
\end{align*}
Since
\begin{equation*}
	- \frac{\|\Delta v_n\|_2^2 + \|\nabla v_n\|_2^2}{2} - \frac{\|\nabla v_n\|_2^2 + \|v_n\|_2^2}{2} = - \frac{\|v_n-\Delta v_n\|_2^2}{2}
\end{equation*}
and
\begin{equation*}
	\frac{v_n-v_{n-1}}{\tau} = \Delta v_n - v_n + w_n,
\end{equation*}
we further obtain
\begin{align*}
	\mathcal{L}[u_n,v_n,w_n] & + \mathcal{D}_0\big[u_n,u_{n-1},v_n,v_{n-1}\big] \\
	& \le \mathcal{L}[u_{n-1},v_{n-1},w_{n-1}] - \frac{\|v_n+v_{n-2}-2v_{n-1}\|_2^2}{2\tau^2} - \frac{\|\Delta(v_n-v_{n-1})\|_2^2}{2} \\ 
	& \qquad - \|\nabla(v_n-v_{n-1})\|_2^2 - \frac{\|v_n-v_{n-1}\|_2^2}{2} \\	
	& = \mathcal{L}[u_{n-1},v_{n-1},w_{n-1}] - \frac{\|v_n+v_{n-2}-2v_{n-1}\|_2^2}{2\tau^2} - \frac{\|v_n-v_{n-1} - \Delta(v_n-v_{n-1})\|_2^2}{2} \\
	& = \mathcal{L}[u_{n-1},v_{n-1},w_{n-1}] - \mathcal{D}_1\big[v_n,v_{n-1},w_n,w_{n-1}\big],
\end{align*}
and the proof is complete.
\end{proof}

\subsection{A lower bound for the Liapunov functional}\label{sec.2.2}

The next step is to derive a lower bound on~$\mathcal{L}$ according to the range of $m>1$.

\begin{lemma}\label{lemds03}
Let $d\ge 5$ and $m>1$. 
\begin{subequations}\label{ph15}
\begin{itemize}
	\item [\textbf{(m1)}] if $m>m^* = (2d-4)/d$, then there are $\delta_m>0$ and $b\ge 0$ depending only on $d$, $m$ and $M$ such that
	\begin{equation}
		\mathcal{L}[u,v,w] \ge \delta_m \left( \|u\|_m^m + \|v\|_{W^{1,2}}^2 + \|w-v+\Delta v\|_2^2 \right) - b \label{ph15a}
	\end{equation}
	for all $(u,v,w)\in X_0$;
	\item [\textbf{(m2)}] if $m=m^*$ and $M<M^*$, with $M^*>0$ defined in~\eqref{ph19ba}, then there is $\delta_{m^*}>0$ depending only on $d$ and $M$ such that 
	\begin{equation}
		\mathcal{L}[u,v,w] \ge \delta_{m^*} \left( \|u\|_{m^*}^{m^*} + \|v\|_{W^{1,2}}^2 + \|w-v+\Delta v\|_2^2 \right) \label{ph15b}
	\end{equation}
	for all $(u,v,w)\in X_0$;
	\item [\textbf{(m3)}] if $m\in (m_*,m^*)$, then we define $z_m>0$ depending only on $d$, $m$ and $M$ by~\eqref{ph19c}. For $\theta\in (0,1)$, there is $\delta_{m,\theta}>0$ depending only on $d$, $m$, $M$ and $\theta$ such that 
	\begin{equation}
		\mathcal{L}[u,v,w] \ge \delta_{m,\theta} \left( \|u\|_{m}^{m} + \|v\|_{W^{1,2}}^2 + \|w-v+\Delta v\|_2^2 \right) \label{ph15c}
	\end{equation}
	for all $(u,v,w)\in X_0$ satisfying additionally $\|u\|_m^m \le \theta z_m$.
\end{itemize}
\end{subequations}
\end{lemma}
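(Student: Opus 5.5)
The idea is to reduce the negative term $-\int uv\,dx$ to a quadratic form in $u$ through the substitution $f := v-\Delta v$, $g := w - v + \Delta v = w - f$. With this notation
\begin{equation*}
	\mathcal{L}[u,v,w] = \frac{M^{m-2}}{m-1}\|u\|_m^m - \int_{\mathbb{R}^d} u (I-\Delta)^{-1} f\,dx + \frac{\|f\|_2^2}{2} + \frac{\|g\|_2^2}{2}.
\end{equation*}
Set $h := (I-\Delta)^{-1}u$. Then $\int u (I-\Delta)^{-1} f\,dx = \int h f\,dx \le \|f\|_2^2/(2(1+\varepsilon)) + (1+\varepsilon)\|h\|_2^2/2$ for any $\varepsilon>0$. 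This gives
\begin{equation*}
	\mathcal{L} \ge \frac{M^{m-2}}{m-1}\|u\|_m^m - \frac{1+\varepsilon}{2}\int_{\mathbb{R}^d} u (I-\Delta)^{-2} u\,dx + \frac{\varepsilon}{2(1+\varepsilon)}\|f\|_2^2 + \frac{\|g\|_2^2}{2}.
\end{equation*}
Since $\|f\|_2^2 = \|v\|_2^2 + 2\|\nabla v\|_2^2 + \|\Delta v\|_2^2 \ge \|v\|_{W^{1,2}}^2$, the $v$ and $w-v+\Delta v$ contributions in~\eqref{ph15} are already present with a positive constant. It remains to bound $Q[u] := \int u (I-\Delta)^{-2}u\,dx$ from above by a portion of $\|u\|_m^m$, plus a constant $b$ in case \textbf{(m1)}.

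Positivity of the kernels gives $Q[u] \le \int u(-\Delta)^{-2}u\,dx$, since $(1+|\xi|^2)^{-2}\le|\xi|^{-4}$ in Fourier variables. For $d\ge5$ the Hardy--Littlewood--Sobolev inequality bounds this by $C\|u\|_{p}^2$ with $p = 2d/(d+4) = m_*$. I will interpolate $\|u\|_{m_*}$ between $\|u\|_1=1$ and $\|u\|_m$ when $m>m_*$. This yields $\|u\|_{m_*}^2 \le \|u\|_m^{2\lambda}$, where $\lambda$ is chosen so that the power of $\|u\|_m^m$ is $\kappa := 2\lambda/m$. A direct computation should give $\kappa<1$ iff $m>m^*$, $\kappa=1$ iff $m=m^*$, and $\kappa>1$ iff $m<m^*$; this is exactly how $m^*$ enters.

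In case \textbf{(m1)}, Young's inequality gives $C\|u\|_m^{m\kappa} \le \frac{M^{m-2}}{2(m-1)(1+\varepsilon)}\|u\|_m^m + b$, and \eqref{ph15a} follows with $\varepsilon=1$, say. In case \textbf{(m2)} I use the optimal constant $K_{m^*}^2$ directly, $\int u(-\Delta)^{-2}u\,dx \le K_{m^*}^2\|u\|_{m^*}^{m^*}$; this is finite by the HLS and interpolation argument above. This leads to
\begin{equation*}
	\mathcal{L} \ge \Big(\frac{M^{m^*-2}}{m^*-1} - \frac{(1+\varepsilon)K_{m^*}^2}{2}\Big)\|u\|_{m^*}^{m^*} + \dots
\end{equation*}
Since $m^*-2<0$, the condition $M<M^*$ should be exactly what makes the bracket positive for small $\varepsilon$. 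I will then define $M^*$ in~\eqref{ph19ba} accordingly. I expect a normalization subtlety here: the factor $M^{m^*-2}$ versus the formula $(m^*-1)K^2/2$ stated in the theorem must be matched by a rescaling. In case \textbf{(m3)} the bound is $Q[u]\le C_0 (\|u\|_m^m)^{\kappa}$ with $\kappa>1$. On $\{\|u\|_m^m\le \theta z_m\}$ this gives $C_0(\|u\|_m^m)^{\kappa} \le C_0(\theta z_m)^{\kappa-1}\|u\|_m^m$. I will choose $z_m$ in~\eqref{ph19c} so that $\frac{C_0}{2}z_m^{\kappa-1} = \frac{M^{m-2}}{m-1}$. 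Taking $\varepsilon$ small with $(1+\varepsilon)\theta^{\kappa-1}<1$ then gives a positive coefficient $\delta_{m,\theta}$.

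The main obstacle is the exponent bookkeeping in the interpolation step, specifically verifying the equivalence $\kappa\lessgtr 1 \iff m\gtrless m^*$ and that $m>m_*$ is needed for the interpolation. A second delicate point is checking that $K_{m^*}$ is finite and that the resulting threshold matches the stated $M^*$.
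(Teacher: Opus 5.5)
Your proof is correct, but it completes the square differently from the paper. The paper splits $\|v-\Delta v\|_2^2/2=\|\Delta v\|_2^2/2+\|\nabla v\|_2^2+\|v\|_2^2/2$ and uses only the homogeneous part, through the exact identity $\frac{\mathcal{E}[u,v]}{M}+\frac{\|\Delta v\|_2^2}{2}=\frac{M^{m-2}}{m-1}\|u\|_m^m-\frac12\int_{\mathbb{R}^d}u(-\Delta)^{-2}u\,dx+\frac12\big\|\Delta[v-(-\Delta)^{-2}u]\big\|_2^2$. This gives \eqref{ph18}, namely $\mathcal{L}\ge f_m(\|u\|_m^m)+\|\nabla v\|_2^2+\|v\|_2^2/2+\|w-v+\Delta v\|_2^2/2$, with no loss. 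The sharp constant then enters via the identification of \eqref{ph16} with \eqref{ph17}.

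You instead keep the inhomogeneous variable $f=v-\Delta v$, give up a factor $1+\varepsilon$ in Young's inequality, and pass from $(I-\Delta)^{-2}$ to $(-\Delta)^{-2}$ by comparing symbols (or kernels, since $u\ge 0$). This buys you two things. You only need the $(-\Delta)^{-2}$-characterization of $K_m$. You also retain $\frac{\varepsilon}{2(1+\varepsilon)}\|v-\Delta v\|_2^2$, i.e. full $W^{2,2}$-control of $v$, which the paper must recover separately in Corollary~\ref{cords04} using \eqref{ph16}. The factor $1+\varepsilon$ is harmless because $M<M^*$ and $\theta<1$ are strict. Your threshold is exactly $(M^*)^{m^*-2}=(m^*-1)K_{m^*}^2/2$, as in \eqref{ph19ba}. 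The discrepancy you noticed comes from the formula for $M^*$ displayed in Theorem~\ref{thm.gbe}, which omits the power $m^*-2$; no rescaling is involved.

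One point to adjust in \textbf{(m3)}. In \eqref{ph19c}, $z_m$ is the maximizer of $f_m(z)=\frac{M^{m-2}}{m-1}z-\frac{K_m^2}{2}z^{\kappa}$, where $\kappa=\frac{d-4}{d(m-1)}>1$. Your normalization $\frac{C_0}{2}z_m^{\kappa-1}=\frac{M^{m-2}}{m-1}$ with $C_0=K_m^2$ instead picks the positive zero of $f_m$, which is larger by the factor $\kappa^{1/(\kappa-1)}$. Your argument still proves \eqref{ph15c} for the paper's $z_m$, since the coefficient becomes $\frac{M^{m-2}}{m-1}\big(1-(1+\varepsilon)\theta^{\kappa-1}/\kappa\big)>0$. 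You should nevertheless take $C_0=K_m^2$ and the maximizer. Corollary~\ref{cords04} uses the monotonicity of $f_m$ on $[0,z_m]$ to propagate $\|u_n\|_m^m\le\theta z_m$ along the scheme, and that step fails with your larger $z_m$.
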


\begin{proof}
Let $d\ge 5$ and $m>m_*$. We infer from H\"older's inequality and the continuous embedding of $\dot{W}^{2,2}(\mathbb{R}^d)$ in $L^{2d/(d-4)}(\mathbb{R}^d)$ that
\begin{align*}
	\left| \int_{\mathbb{R}^d} uv\ dx \right| & \le \|u\|_{2d/(d+4)} \|v\|_{2d/(d-4)} \\
	& \le C \|u\|_1^{[m(d+4)-2d]/[2d(m-1)]} \|u\|_m^{m(d-4)/[2d(m-1)]} \|v\|_{\dot{W}^{2,2}} \\
	& \le C \|u\|_m^{m(d-4)/[2d(m-1)]} \|v\|_{\dot{W}^{2,2}},
\end{align*}
from which we deduce that
\begin{equation}
	K_m := \sup_{(u,v)\in Y_0} \left\{ \frac{\displaystyle{\int_{\mathbb{R}^d} uv\ dx}}{\|u\|_m^{m(d-4)/[2d(m-1)]}\|v\|_{\dot{W}^{2,2}} } \right\}<\infty. \label{ph16}
\end{equation}
As in \cite[Section~4]{Mim2024b}, we next observe that, for $u\in (L_+^1\cap L^m)(\mathbb{R}^d)$ satisfying $\|u\|_1=1$, the function $(-\Delta)^{-2}u$ belongs to $\dot{W}^{2,2}(\mathbb{R}^d)$ with
\begin{equation*}
	\|\Delta (-\Delta)^{-2} u\|_2^2 = \int_{\mathbb{R}^d} (-\Delta)^{-2} u [\Delta^2 (-\Delta)^{-2} u]\ dx = \int_{\mathbb{R}^d} u (-\Delta)^{-2} u \ dx, 
\end{equation*}
so that
\begin{equation*}
	K_m^2 \ge \frac{\displaystyle{\left( \int_{\mathbb{R}^d} u (-\Delta)^{-2}u \ dx \right)^2}}{\|u\|_m^{m(d-4)/[d(m-1)]} \|(-\Delta)^{-2}u\|^2_{\dot{W}^{2,2}}} = \frac{\displaystyle{\int_{\mathbb{R}^d} u (-\Delta)^{-2}u \ dx}}{\|u\|_m^{m(d-4)/[d(m-1)]}}.
\end{equation*}
Consequently,
\begin{equation*}
	K_m^2 \ge \sup_{u\in (L_+^1\cap L^m)(\mathbb{R}^d), \ \|u\|_1=1} \frac{\displaystyle{\int_{\mathbb{R}^d} u (-\Delta)^{-2}u \ dx}}{\|u\|_m^{m(d-4)/[d(m-1)]}}.
\end{equation*}
Conversely, for $(u,v)\in Y_0$, 
\begin{align*}
	\left| \int_{\mathbb{R}^d} uv\ dx \right| & = \left| \int_{\mathbb{R}^d} u [(-\Delta)^{-1}\Delta v]\ dx \right| = \left| \int_{\mathbb{R}^d} (-\Delta)^{-1}u \Delta v\ dx \right| \\
	& \le \|(-\Delta)^{-1}u\|_2 \|\Delta v\|_2 = \left( \int_{\mathbb{R}^d} u (-\Delta)^{-2}u \ dx \right)^{1/2} \|v\|_{\dot{W}^{2,2}},
\end{align*}
from which we deduce that
\begin{equation*}
	K_m^2 \le \sup_{u\in (L_+^1\cap L^m)(\mathbb{R}^d), \ \|u\|_1=1} \frac{\displaystyle{\int_{\mathbb{R}^d} u (-\Delta)^{-2}u \ dx}}{\|u\|_m^{m(d-4)/[d(m-1)]}}.
\end{equation*}
Therefore,
\begin{equation}
	K_m^2 = \sup_{u\in (L_+^1\cap L^m)(\mathbb{R}^d), \ \|u\|_1=1} \frac{\displaystyle{\int_{\mathbb{R}^d} u (-\Delta)^{-2}u \ dx}}{\|u\|_m^{m(d-4)/[d(m-1)]}}. \label{ph17}
\end{equation}
Next, since
\begin{equation*}
	\mathcal{L}[u,v,w] = \frac{\mathcal{E}[u,v]}{M} + \frac{\|\Delta v\|_2^2}{2} + \frac{\|w-v+\Delta v\|_2^2}{2} + \|\nabla v\|_2^2 + \frac{\|v\|_2^2}{2}
\end{equation*}
and
\begin{align*}
	\frac{\mathcal{E}[u,v]}{M} & + \frac{\|\Delta v\|_2^2}{2} - \frac{\mathcal{E}[u,(-\Delta)^{-2}u]}{M} - \frac{\|\Delta (-\Delta)^{-2}u\|_2^2}{2} \\
	& = - \int_{\mathbb{R}^d} u \big[v-(-\Delta)^{-2}u\big]\ dx + \int_{\mathbb{R}^d} [\Delta(-\Delta)^{-2}u] \Delta[v-(-\Delta)^{-2}u]\ dx \\
	& \qquad + \frac{\|\Delta[v-(-\Delta)^{-2}u]\|_2^2}{2} \\
	& = \frac{\|\Delta[v-(-\Delta)^{-2}u]\|_2^2}{2},
\end{align*}
we infer from~\eqref{ph17} that
\begin{align*}
	\mathcal{L}[u,v,w] & \ge \frac{\mathcal{E}[u,(-\Delta)^{-2}u]}{M} + \frac{\|\Delta (-\Delta)^{-2}u\|_2^2}{2} + \frac{\|w-v+\Delta v\|_2^2}{2} + \|\nabla v\|_2^2 + \frac{\|v\|_2^2}{2} \\
	& = \frac{M^{m-2}}{m-1} \|u\|_m^m - \frac{1}{2} \int_{\mathbb{R}^d} u (-\Delta)^{-2} u\ dx + \frac{\|w-v+\Delta v\|_2^2}{2} + \|\nabla v\|_2^2 + \frac{\|v\|_2^2}{2} \\
	& \ge \frac{M^{m-2}}{m-1} \|u\|_m^m - \frac{K_m^2}{2} \|u\|_m^{m(d-4)/[d(m-1)]} + \frac{\|w-v+\Delta v\|_2^2}{2} + \|\nabla v\|_2^2 + \frac{\|v\|_2^2}{2},
\end{align*}
that is, introducing
\begin{equation*}
	f_m(z) := \frac{M^{m-2}}{m-1} z - \frac{K_m^2}{2} z^{(d-4)/[d(m-1)]}, \qquad z\in (0,\infty),
\end{equation*}
\begin{equation}
	\mathcal{L}[u,v,w] \ge f_m\big(\|u\|_m^m\big) + \frac{\|w-v+\Delta v\|_2^2}{2} + \|\nabla v\|_2^2 + \frac{\|v\|_2^2}{2}. \label{ph18}
\end{equation}
At this point, we note that
\begin{equation*}
	\frac{d-4}{d(m-1)} = 1 + \frac{m^*-m}{m-1}.
\end{equation*}
Consequently,
\begin{subequations}\label{ph19}
\begin{itemize}
	\item [$\mathbf{(m1)}$] If $m>m^*$, then $(d-4)/[d(m-1)]<1$ and it follows from Young's inequality that
	\begin{equation}
		f_m(z) \ge \delta_m z - C M^{-[(m-2)(d-4)]/[d(m-m^*)]}, \qquad z\in (0,\infty), \label{ph19a}
	\end{equation}
	with $\delta_m := [(m-m^*) M^{m-2}]/(m-1)^2>0$.
	\item [$\mathbf{(m2)}$] If $m=m^*$, then $(d-4)/[d(m-1)]=1$ and 
	\begin{equation}
		f_{m^*}(z) = \left( \frac{M^{m^*-2}}{m^*-1} - \frac{K_{m^*}^2}{2} \right) z = \delta_{m^*} z \ge 0, \qquad z\in (0,\infty), \label{ph19b}
	\end{equation}
	with $\delta_{m^*} := (2M^{m^*-2}-(m^*-1)K_{m^*}^2)/[2(m^*-1)]>0$ provided 
	\begin{equation}
		M^{m^*-2}>\big(M^*\big)^{m^*-2} := (m^*-1)K_{m^*}^2/2. \label{ph19ba}
	\end{equation} 
	Equivalently, recalling that $m^*<2$, the lower bound~\eqref{ph19b} holds true for $M<M^*$.
	\item [$\mathbf{(m3)}$] If $m\in(m_*,m^*)$, then $(d-4)/[d(m-1)]>1$ and $f_m$ has a unique maximum $z_m$ given by
	\begin{equation}
		z_m := \left[ \frac{2dM^{m-2}}{(d-4)K_m^2} \right]^{(m-1)/(m^*-m)}, \quad f_m(z_m) = \frac{d(m^*-m)M^{m-2}}{(d-4)(m-1)}z_m. \label{ph19c}
	\end{equation}
	Therefore, for $\theta\in (0,1)$ and $z\in (0,\theta z_m)$,
	\begin{align*}
		f_m(z) & = \left[ \frac{M^{m-2}}{m-1} - \frac{K_m^2}{2} z_m^{(m^*-m)/(m-1)} \left( \frac{z}{z_m} \right)^{(m^*-m)/(m-1)} \right] z \\
		& \ge \delta_{m,\theta} z
	\end{align*}
	with
	\begin{equation}
		\delta_{m,\theta} := \frac{M^{m-2}}{(d-4)(m-1)} \left[ d-4 - d(m-1) \theta^{(m^*-m)/(m-1)} \right] >0. \label{ph19ca}
	\end{equation}
\end{itemize}
\end{subequations}
We now combine~\eqref{ph18} with the outcome of~\eqref{ph19} to complete the proof of Lemma~\ref{lemds03}.
\end{proof}

Subsequently, we also establish a lower bound on $\mathcal{L}$ in the lower dimensional cases $1\le d\le 4$ for $m>1$.

\begin{lemma}
Let $1\le d \le 4$ and $m>1$.
\begin{itemize}
	\item[\textbf{(m4)}] There are $\delta_m>0$ and $b\ge 0$ depending only on $d$, $m$ and $M$ such that
	\begin{equation}
	\mathcal{L}[u,v,w] \ge \delta_m \left( \|u\|_m^m + \|(I-\Delta)v\|_{2}^2 + \|w-v+\Delta v\|_2^2 \right) - b \label{ph15d}
	\end{equation}
	for all $(u,v,w)\in X_0$.
\end{itemize}
\end{lemma}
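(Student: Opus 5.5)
We need a lower bound for $\mathcal{L}$ when $1\le d\le 4$. Writing $\mathcal{L}$ as in~\eqref{ph08}, the only negative term is $-\int_{\mathbb{R}^d} uv\,dx$. The plan is to absorb it into half of $\|(I-\Delta)v\|_2^2/2$ and part of $\|u\|_m^m$, keeping the remaining halves as the coercive terms.

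\emph{Step 1: bound $v$ in $L^\infty$ or $L^q$.} Since $d\le 4$, the space $W^{2,2}(\mathbb{R}^d)$ embeds continuously in $L^q(\mathbb{R}^d)$ for every $q\in[2,\infty)$, and also in $L^\infty$ when $d\le 3$. Moreover $\|v\|_{W^{2,2}}\le C\|(I-\Delta)v\|_2$, by Fourier transform. When $d\le 3$ this gives
\begin{equation*}
\Big|\int_{\mathbb{R}^d} uv\,dx\Big|\le \|u\|_1\|v\|_\infty \le C\|(I-\Delta)v\|_2 ,
\end{equation*}
since $\|u\|_1=1$. Young's inequality then yields
\begin{equation*}
\Big|\int_{\mathbb{R}^d} uv\,dx\Big|\le \tfrac14\|(I-\Delta)v\|_2^2+C .
\end{equation*}
When $d=4$, pick $q\in(2,\infty)$ so large that its conjugate exponent $q'\in(1,m)$; this is possible because $m>1$. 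H\"older's inequality and interpolation between $L^1$ and $L^m$ give
\begin{equation*}
\Big|\int_{\mathbb{R}^d} uv\,dx\Big|\le \|u\|_{q'}\|v\|_q\le C\|u\|_m^{\alpha}\|(I-\Delta)v\|_2 ,
\end{equation*}
with $\alpha= m(q'-1)/[q'(m-1)]$, using again $\|u\|_1=1$. Young's inequality then bounds this by
\begin{equation*}
\tfrac14\|(I-\Delta)v\|_2^2+C\|u\|_m^{2\alpha}.
\end{equation*}

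\emph{Step 2: absorb the $u$-term.} The key check, and the only delicate point, is that $2\alpha<m$. This holds provided $q'$ is close enough to $1$, since $\alpha\to0$ as $q'\to1$. It is exactly where $d=4$ being critical only for $m=1$ is used, and here $m>1$. Then a further Young inequality gives
\begin{equation*}
C\|u\|_m^{2\alpha}\le \frac{M^{m-2}}{2(m-1)}\|u\|_m^m + C(d,m,M).
\end{equation*}

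\emph{Step 3: conclude.} Collecting the estimates gives
\begin{equation*}
\mathcal{L}\ge \frac{M^{m-2}}{2(m-1)}\|u\|_m^m+\tfrac14\|(I-\Delta)v\|_2^2+\tfrac12\|w-v+\Delta v\|_2^2-b .
\end{equation*}
This is~\eqref{ph15d} with $\delta_m=\min\{M^{m-2}/[2(m-1)],1/4\}$. I expect no real obstacle beyond choosing $q$ in the case $d=4$.
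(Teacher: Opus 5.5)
Your argument is correct, but it takes a different route from the paper's.

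\textbf{The paper's route.} The paper argues by duality. It writes
\[
\bigl|\int_{\mathbb{R}^d} uv\,dx\bigr|\le \|(I-\Delta)^{-1}u\|_2\,\|(I-\Delta)v\|_2 ,
\]
applies Young's inequality, and then bounds
\[
\|(I-\Delta)^{-1}u\|_2^2=\int_{\mathbb{R}^d} u\,(I-\Delta)^{-2}u\,dx\le \|(I-\Delta)^{-2}u\|_{m/(m-1)}\|u\|_m\le C\|u\|_1\|u\|_m .
\]
The last step uses that the Bessel potential $(I-\Delta)^{-2}$ maps $L^1$ into $L^p$ for all $p\le\infty$ when $d\le 3$, and for all $p<\infty$ when $d=4$. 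The resulting bad term is linear in $\|u\|_m$. It is therefore absorbed by $\|u\|_m^m$ for every $m>1$, with no parameter to tune and a single argument covering all $d\le 4$.

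\textbf{Your route.} You put the regularity on $v$ instead, through the embeddings $W^{2,2}\hookrightarrow L^\infty$ for $d\le 3$ and $W^{2,2}\hookrightarrow L^q$, $q<\infty$, for $d=4$. In $d=4$ you make up for the missing $L^\infty$ endpoint by interpolating $u$ between $L^1$ and $L^m$ with $q'$ close to $1$. This yields the bad term $\|u\|_m^{2\alpha}$ with $2\alpha<m$, and your choice of $q$ and the check $2\alpha<m$ are correct.

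\textbf{Comparison.} Both routes exploit the same borderline behaviour: in $d=4$ the kernel of $(I-\Delta)^{-2}$ has only a logarithmic singularity. By duality, your embedding says exactly that $(I-\Delta)^{-1}$ maps $L^{q'}$ into $L^2$ for every $q'\in(1,2]$. Your version needs only textbook Sobolev embeddings and H\"older's inequality, at the price of a case distinction in $d$ and the choice of $q$. The paper's bilinear $L^1\times L^m$ bound is more economical and treats all $d\le 4$ at once.
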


\begin{proof}
Let $1\le d \le 4$ and $m>1$. First notice that it follows from the Sobolev duality  and Young's inequality that
\begin{align*}
\left|\int_{\mathbb{R}^d}uv\,dx\right|
\le\,&\|(I-\Delta)^{-1}u\|_2 \| (I-\Delta)v\|_{2}
\\
\le\,& M \|(I-\Delta)^{-1}u\|_2^2 + \frac{1}{4M} \|(I-\Delta)v\|_{2}^2.
\end{align*}
Next, owing to $m>1$, H\"older's inequality and  the boundedness of the Bessel potential operator $(I-\Delta)^{-2}$ from $L^1(\mathbb{R}^d)$ to~$L^p(\mathbb{R}^d)$ with $p\in[1,\infty]$ if $d\in\{1,2,3\}$ and $p\in[1,\infty)$ if $d=4$ imply that
\begin{align*}
\|(I-\Delta)^{-1}u\|_2^2
=\,&\int_{\mathbb{R}^d} u (I-\Delta)^{-2}u \,dx
\\
\le\,& \|(I-\Delta)^{-2}u\|_{m/(m-1)}\|u\|_m 
\\
\le\,& C\|u\|_1 \|u\|_m \le\,C \|u\|_m.
\end{align*}
Consequently, 
\begin{equation*}
	\mathcal{L}[u,v,w] \ge\, \frac{M^{m-1}}{m-1} \|u\|_m^m + \frac{\|v-\Delta v\|_2^2}{2} + \frac{\|w-v+\Delta v\|_2^2}{2} - CM \|u\|_m - \frac{\|v-\Delta v\|_2^2}{4}  
\end{equation*}
and the desired bound on $\mathcal{L}$ 
\begin{equation*}
	\mathcal{L}[u,v,w]\ge\,\delta_{m}( \|u\|_m^m+\|v-\Delta v\|_2^2+\|w-v+\Delta v\|_2^2 )-b
\end{equation*}
for some $\delta_{m}>0$ and $b>0$ follows from Young's inequality.
\end{proof}

\subsection{$L^m$-estimate on $(u_n)_{n\ge 1}$}\label{sec.2.3}

We now exploit Lemma~\ref{lemds01} and Lemma~\ref{lemds03} to derive a bound on $(u_n,v_n,w_n)_{n\ge 1}$ in $L^m(\mathbb{R}^d)\times W^{2,2}(\mathbb{R}^d)\times L^2(\mathbb{R}^d)$.

\begin{corollary}\label{cords04}
Let $d\ge1$ and $m>1$. 
\begin{subequations}\label{ph20}
\begin{itemize}
	\item [\textbf{(m1)}] If $d\ge 5$ and $m>m^*$, then there is $A>0$ depending only on $d$, $m$, $M$ and $(u_0,v_0,w_0)$ such that
	\begin{equation}
		\|u_n\|_m^m + \|v_n-\Delta v_n\|_2^2 + \|w_n\|_2^2 \le A, \qquad n\ge 1. \label{ph20a}
	\end{equation}
	\item [\textbf{(m2)}] If $d\ge 5$, $m=m^*$ and $M\in \big(0,M^*\big)$, then there is $A>0$ depending only on $d$, $M$ and $(u_0,v_0,w_0)$ such that
	\begin{equation}
		\|u_n\|_{m^*}^{m^*} + \|v_n-\Delta v_n\|_2^2 + \|w_n\|_2^2 \le A, \qquad n\ge 1. \label{ph20b}
	\end{equation}
	\item [\textbf{(m3)}] If $d\ge 5$ and $m\in \big(m_*,m^*\big)$ and $(u_0,v_0,w_0)$ are such that 
	\begin{equation*}
		\|u_0\|_m^m\le \theta z_m \;\;\text{ and }\;\; \mathcal{L}[u_0,v_0,w_0]\le f_m(\theta z_m)
	\end{equation*} 
	for some $\theta\in (0,1)$, then 
	\begin{equation}
		\|u_n\|_m^m \le \theta z_m, \qquad n\ge 1, \label{ph20c}
	\end{equation}
	and there is $A>0$ depending only on $d$, $m$, $M$, $\theta$ and $(u_0,v_0,w_0)$ such that
	\begin{equation}
		\|v_n-\Delta v_n\|_2^2 + \|w_n\|_2^2 \le A, \qquad n\ge 1. \label{ph20d}
	\end{equation}
	\item [\textbf{(m4)}] If $1\le d\le 4$ and $m>1$, then there is $A>0$ depending only on $d$, $m$, $M$ and $(u_0,v_0,w_0)$ such that
		\begin{equation}
		\|u_n\|_m^m + \|v_n-\Delta v_n\|_2^2 + \|w_n\|_2^2 \le A, \qquad n\ge 1. \label{ph20e}
		\end{equation}
\end{itemize}
\end{subequations}
\end{corollary}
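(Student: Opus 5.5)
The plan is to iterate the discrete dissipation inequality~\eqref{ph09} of Lemma~\ref{lemds01}. Since $\mathcal{D}_0,\mathcal{D}_1\ge 0$, induction on $n$ gives
\begin{equation*}
	\mathcal{L}[u_n,v_n,w_n]\le \mathcal{L}[u_0,v_0,w_0]=:L_0<\infty, \qquad n\ge 1.
\end{equation*}
$L_0$ is finite by~\eqref{hyp_CI} and~\eqref{ph01}: $u_0\in L^1\cap L^\infty\subset L^m$ and $v_0\in W^{2,2}$, so $\int u_0v_0\,dx$ is finite. Each $(u_n,v_n,w_n)$ lies in $X_0$ because $u_n\in\mathcal{U}_m\subset \mathcal{P}_2\cap L^m$, $v_n\in W^{4,2}$, $w_n\in W^{2,2}$, and they are non-negative. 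I will then combine this with the lower bounds of Lemma~\ref{lemds03} and~\eqref{ph15d}.

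\textbf{Cases (m1), (m2), (m4).} By~\eqref{ph15a},~\eqref{ph15b} or~\eqref{ph15d} (with $b=0$ in case (m2)),
\begin{equation*}
	\delta\left(\|u_n\|_m^m+\|v_n\|_{W^{1,2}}^2+\|w_n-v_n+\Delta v_n\|_2^2\right)\le L_0+b.
\end{equation*}
Two further controls are needed: one on $\|v_n-\Delta v_n\|_2$ in cases (m1)--(m2), and one on $\|w_n\|_2$ in all three cases. For the first, I would return to~\eqref{ph18} rather than use~\eqref{ph15a} directly. The exact computation preceding~\eqref{ph18} also keeps the nonnegative term $\|\Delta[v-(-\Delta)^{-2}u]\|_2^2/2$. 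Hence
\begin{equation*}
	\|\Delta v_n\|_2\le \|\Delta(-\Delta)^{-2}u_n\|_2+C=\Big(\int u_n(-\Delta)^{-2}u_n\,dx\Big)^{1/2}+C\le K_m\|u_n\|_m^{m(d-4)/[2d(m-1)]}+C,
\end{equation*}
and the right-hand side is already bounded. Together with $\|v_n\|_{W^{1,2}}$, this bounds $\|v_n-\Delta v_n\|_2$. For the second, the triangle inequality gives $\|w_n\|_2\le \|w_n-v_n+\Delta v_n\|_2+\|v_n-\Delta v_n\|_2$. This yields~\eqref{ph20a},~\eqref{ph20b} and~\eqref{ph20e}; in case (m4), $\|(I-\Delta)v_n\|_2$ is controlled directly by~\eqref{ph15d}.

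\textbf{Case (m3).} This is the main obstacle, since the lower bound~\eqref{ph15c} is only available under the a priori constraint $\|u\|_m^m\le\theta z_m$. I would argue by induction that $\|u_n\|_m^m\le\theta z_m$. Assume it holds up to $n-1$. The minimizer $u_n$ lies in $\mathcal{U}_m$, so $\|u_n\|_m^m\le z_m$. Inequality~\eqref{ph18} holds for all $u$, so
\begin{equation*}
	f_m(\|u_n\|_m^m)\le \mathcal{L}[u_n,v_n,w_n]\le \mathcal{L}[u_0,v_0,w_0]\le f_m(\theta z_m).
\end{equation*}
The function $f_m$ is strictly increasing on $(0,z_m]$, because it is concave with its maximum at $z_m$. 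Hence $\|u_n\|_m^m\le\theta z_m$, which is~\eqref{ph20c}. The case $\|u_n\|_m=0$ is excluded, since $u_n$ is a probability density.

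Once~\eqref{ph20c} is known, I apply~\eqref{ph15c} to get bounds on $\|v_n\|_{W^{1,2}}$ and $\|w_n-v_n+\Delta v_n\|_2$. The remaining bounds on $\|\Delta v_n\|_2$ and $\|w_n\|_2$ follow exactly as above from the retained term in~\eqref{ph18} and the triangle inequality, giving~\eqref{ph20d}.
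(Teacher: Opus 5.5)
Your proof is correct and follows the paper's argument. You iterate \eqref{ph09} and invoke the lower bounds of Lemma~\ref{lemds03} and \eqref{ph15d}. In case (m3) you use the monotonicity of $f_m$ on $(0,z_m]$ together with $u_n\in\mathcal{U}_m$, and you recover $\|w_n\|_2$ by the triangle inequality.

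The only deviation is how you bound $\|\Delta v_n\|_2$. The paper writes $\|v_n-\Delta v_n\|_2^2\le 2\mathcal{L}[u_n,v_n,w_n]+2\int_{\mathbb{R}^d}u_nv_n\,dx$ and applies \eqref{ph16}. Strictly, that produces a factor $\|\Delta v_n\|_2$, which the paper's display omits and which then has to be absorbed by Young's inequality. You instead keep the square $\|\Delta[v_n-(-\Delta)^{-2}u_n]\|_2^2/2$ and combine it with \eqref{ph17}, which gives the bound directly with no absorption step.
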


\begin{proof} Let $n\ge 1$. By~\eqref{ph09} in Lemma~\ref{lemds01}, 
\begin{equation}
	\mathcal{L}[u_n,v_n,w_n] \le \mathcal{L}[u_{n-1},v_{n-1},w_{n-1}] \le \mathcal{L}[u_0,v_0,w_0]. \label{ph21}
\end{equation}

\medskip

\noindent\textbf{(m1)} It readily follows from~\eqref{ph15a} and~\eqref{ph21} that
\begin{equation*}
	\|u_n\|_{m}^{m} + \|v_n\|_{W^{1,2}}^2 + \|w_n-v_n+\Delta v_n\|_2^2 \le \frac{\mathcal{L}[u_n,v_n,w_n]+b}{\delta_m} \le \frac{\mathcal{L}[u_0,v_0,w_0]+b}{\delta_m}.
\end{equation*}
In addition, we infer from~\eqref{ph08}, \eqref{ph16} and~\eqref{ph21} that
\begin{align*}
	\|v_n-\Delta v_n\|_2^2 & \le 2 \mathcal{L}[u_n,v_n,w_n] + 2 \int_{\mathbb{R}^d} u_n v_n\ dx \\
	& \le 2 \mathcal{L}[u_0,v_0,w_0] + 2 K_m \|u_n\|_m^{m(d-4)/[2d(m-1)]}, 
\end{align*}
and~\eqref{ph20a} is an immediate consequence of the above two inequalities.

\medskip

\noindent\textbf{(m2)} In that case, owing to the assumption $M\in\big(0,M^*\big)$, the proof of~\eqref{ph20b} relies on a similar argument based on~\eqref{ph15b} instead of~\eqref{ph15a}.

\medskip

\noindent\textbf{(m3)} We pick $\theta \in (0, 1)$ such that
\begin{equation}
	\|u_0\|_m^m\le \theta z_m \;\;\text{and}\;\; \mathcal{L}[u_0,v_0,w_0] \le f(\theta z_m). \label{ph22}
\end{equation} 
Then, by~\eqref{ph18}, \eqref{ph21} and~\eqref{ph22},
\begin{equation*}
	f_m(\|u_n\|_m^m) \le \mathcal{L}[u_n,v_n,w_n] \le \mathcal{L}[u_0,v_0,w_0] < f_m(\theta z_m),
\end{equation*}
from which we deduce~\eqref{ph20c} due to the monotonicity of $f_m$ on $[0,z_m]$, since $u_n\in\mathcal{U}_m$ guarantees that $\|u_n\|_m^m \in [0,z_m]$. We then proceed as in the previous cases with the help of~\eqref{ph15c} to complete the proof of~\eqref{ph20d}.

\medskip

\noindent\textbf{(m4)} The proof of~\eqref{ph20e} is based on~\eqref{ph15d} and is similar to that of~\eqref{ph20a}.
\end{proof}

\subsection{$L^\rho$-estimate on $(u_n)_{n\ge 1}$, $\rho\in (m,\infty)$}\label{sec.2.4}

In the next step, we employ the flow interchange method introduced in \cite{MMS2009} to derive $L^\rho$-bounds on $(u_n)_{n\ge 1}$ for $\rho\in (m,\infty)$ in terms of $(v_n-\Delta v_n)_{n\ge 1}$. To estimate the latter, we take advantage of the just established $L^m$-estimate on $(u_n)_{n\ge 1}$ to derive additional estimates on $(w_n)_{n\ge 1}$ and $(v_n)_{n\ge 1}$ using the discrete heat equations~\eqref{ph04}, the keystone in that direction being Lemma~\ref{lemds02a} stated in the Appendix.

\begin{lemma}\label{lemds04}
Let $d\ge 1$ and $m>1$ satisfying additionally $m>m_*$ when $d\ge 5$ and assume that there is $B>0$ such that
\begin{equation}
	\|u_n\|_m \le B, \qquad n\ge 1. \label{ph23} 
\end{equation}
Then, for any $\rho > \max\{m,(d-2)/2\}$, there are $q_0>1$ 
depending only on $d$, $m$ and $\rho$ and $B_\rho>0$ depending only on $d$, $m$, $(u_0,v_0,w_0)$, $B$ and $\rho$ such that
\begin{equation}
\|u_N\|_\rho^\rho + \tau \sum_{n=1}^N (1+\tau)^{-q_0(N-n)}\big\|\nabla u_n^{(m+\rho-1)/2}\big\|_2^2  \le B_\rho, \qquad N\ge 1. \label{ph24}
\end{equation}
\end{lemma}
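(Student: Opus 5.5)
The plan is to apply the flow interchange method of \cite{MMS2009} to the minimization step~\eqref{ph02c}, with auxiliary functional $\mathcal{H}_\rho[u]:=\|u\|_\rho^\rho/(\rho-1)$. Its Wasserstein gradient flow is the porous medium equation $\partial_s U=\Delta U^\rho$, which is $0$-geodesically convex, and its dissipation along $\mathcal{E}[\cdot,v_n]$ can be computed. Let $S_s u_n$ denote this flow started from $u_n$. Minimality of $u_n$ gives
\begin{equation*}
\mathcal{H}_\rho[u_n]-\mathcal{H}_\rho[u_{n-1}]\le -\tau \liminf_{s\to0}\frac{\mathcal{E}[S_su_n,v_n]-\mathcal{E}[u_n,v_n]}{s}.
\end{equation*}
One point needs care: when $m<m^*$ the admissible set $\mathcal{U}_m$ carries the constraint $\|u\|_m\le z_m^{1/m}$. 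This constraint is preserved by $S_s$, since $L^m$ norms decrease along the porous medium flow, so $S_s u_n$ remains admissible. If needed, one regularizes $u_n$ first, as in \cite{MMS2009}, to justify the computation. Formally the derivative equals
\begin{equation*}
\frac{4m\rho M^{m-1}}{(m+\rho-1)^2}\big\|\nabla u_n^{(m+\rho-1)/2}\big\|_2^2 + M\int u_n^\rho\Delta v_n\,dx.
\end{equation*}
This yields the discrete inequality
\begin{equation*}
\mathcal{H}_\rho[u_n]+c\tau\big\|\nabla u_n^{(m+\rho-1)/2}\big\|_2^2\le \mathcal{H}_\rho[u_{n-1}]+M\tau\int u_n^\rho(-\Delta v_n)_+\,dx .
\end{equation*}

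Next comes the bad term. Write $-\Delta v_n = (v_n-\Delta v_n) - v_n$ and drop $-v_n\le0$, so it suffices to bound $\int u_n^\rho g_n$ with $g_n:=(v_n-\Delta v_n)_+$. Apply H\"older with some exponent $r$: $\int u_n^\rho g_n\le \|u_n\|_{\rho r'}^\rho\|g_n\|_r$. Then interpolate $\|u_n^{(m+\rho-1)/2}\|$ in $L^{2\rho r'/(m+\rho-1)}$ between the $L^{2m/(m+\rho-1)}$ control from~\eqref{ph23} and the Sobolev norm $\|\nabla u_n^{(m+\rho-1)/2}\|_2$. Young's inequality then absorbs the gradient part into the dissipation and leaves a term $C\|g_n\|_r^{\kappa}$ for some power $\kappa$. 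To add the term $-\|u_n\|_\rho^\rho$ needed to close the recursion, use the same interpolation once more: a quantity $\varepsilon\|u_n\|_\rho^\rho$ is bounded by half the dissipation plus a constant, which gives
\begin{equation*}
(1+q_0\tau)\mathcal{H}_\rho[u_n]+\tfrac c2\tau\|\nabla u_n^{(m+\rho-1)/2}\|_2^2\le \mathcal{H}_\rho[u_{n-1}]+C\tau(1+\|g_n\|_r^\kappa).
\end{equation*}
Since $(1+q_0\tau)\ge(1+\tau)^{q_0}$ for $q_0\le1$, or after adjusting constants, we multiply by $(1+\tau)^{q_0 n}$ and sum from $1$ to $N$. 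This produces~\eqref{ph24}, provided that
\begin{equation*}
\tau\sum_{n=1}^N(1+\tau)^{-q_0(N-n)}\|g_n\|_r^\kappa\le C
\end{equation*}
uniformly in $N$.

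The main obstacle is this last weighted estimate on $v_n-\Delta v_n$. It has to come from the discrete heat equations~\eqref{ph04}, driven by the $L^m$ bound~\eqref{ph23} on $u_{n-1}$, via the discrete maximal regularity results of the Appendix (Lemma~\ref{lemds02a}). From~\eqref{ph04b}, discrete maximal regularity gives weighted $\ell^\kappa$-in-time bounds on $w_n$ in $W^{2,m}$, and hence in $L^{p}$ for $p$ up to the Sobolev exponent of $W^{2,m}$. From~\eqref{ph04a}, the same argument bounds $\Delta v_n$ in $W^{2,p}$, hence in $L^r$ with $1/r\ge 1/m-4/d$. The weights $(1+\tau)^{-q_0(N-n)}$ arise because the factor $e^{-t}$ in the resolvent of $I-\Delta$ gives geometric decay, and $q_0$ is chosen below that decay rate. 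We also need the initial data $v_0,w_0\in W^{2,m}$, as assumed in~\eqref{ph01}. Checking the exponents should be the delicate part: the H\"older/Gagliardo--Nirenberg step requires $r$ large enough relative to $\rho$ and $d$, while maximal regularity supplies $r$ only up to $dm/(d-4m)$. The hypotheses $m>m_*$ and $\rho>(d-2)/2$ should be exactly what makes these two ranges compatible. If a single step is insufficient, I would bootstrap by raising $\rho$ gradually and using the newly obtained $L^\rho$ bound on $u_n$ as the new driving integrability.
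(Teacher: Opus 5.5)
Your flow-interchange step (including the admissibility of $S_su_n$ in $\mathcal{U}_m$) and your H\"older--Gagliardo--Nirenberg--Young treatment of $\int u_n^\rho(v_n-\Delta v_n)\,dx$ agree with the paper. The gap is in the step you call the main obstacle. It rests on a wrong derivative count, and once corrected your strategy fails on part of the range $d\ge5$, $m>m_*$.

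\textbf{The derivative count.} Discrete maximal regularity for \eqref{ph04} driven by $u_{n-1}\in L^m$ gives $w_n\in W^{2,m}$ and then $v_n\in W^{4,m}$. Hence $v_n-\Delta v_n\in W^{2,m}\subset L^r$ only for $1/r\ge 1/m-2/d$. It does not place $\Delta v_n$ in $W^{2,p}$, so the gain is two derivatives, not four.

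\textbf{Why the argument then fails.} In your notation ($r$ on $g_n$, $r'$ on $u_n^\rho$) and for $m<d/2$, the best choice is $r=dm/(d-2m)$, $r'=dm/(m(d+2)-d)$. Interpolating $\|u_n\|_{\rho r'}$ between $\|u_n\|_m$ and $\|\nabla u_n^{(m+\rho-1)/2}\|_2$ requires $\rho r'\le d(m+\rho-1)/(d-2)$. This is equivalent to $\rho(d-4m)\le(m-1)(m(d+2)-d)$. For $m<d/4$, a range that contains $[m_*,m^*]$ whenever $d\ge5$, this caps $\rho$. At $m=m_*$ and $\rho=m$ the left side is already twice the right side. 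So for $m$ close to $m_*$ no $\rho>m$ is admissible at all; for instance $d=8$, $m=7/5\in(m_*,m^*)$ forces $\rho\le1$. Neither your one-step argument nor your bootstrap can start there. A pointwise-in-time $L^m$ bound on the source is simply too weak below $m=d/4$. Your route does work when $m\ge d/4$, in particular for $d\le4$.

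\textbf{The missing idea (the paper's Step~4).}
\begin{itemize}
\item Fix $s\in(\max\{d/4,m\},d/2)$ and work with the rescaled sequences $\overline{w}_n=(1+\tau)^nw_n$, $\overline{v}_n=(1+\tau)^nv_n$.
\item Apply the $\ell^{q_0}$ maximal regularity of Lemma~\ref{lemds02a} in $L^s$ to the $w$-equation, and in $L^{ds/(d-2s)}$ to the $v$-equation.
\item Take $q_0=1/(1-\delta_0)$ to be exactly the exponent produced by Young's inequality, so that the weights $(1+\tau)^{q_0n}$ on $\|v_n-\Delta v_n\|^{q_0}$ and on the dissipation coincide.
\item Since $ds/(d-2s)>d/2$, the Sobolev-range constraint above holds for every $\rho$; this is where $s>d/4$ is used. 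The condition $\rho>(d-2)/2$ then allows $s<d(m+\rho-1)/(d-2)$ in the next interpolation.
\end{itemize}
The price is that the source norm $\|u_{n-1}\|_s$, with $s>m$, is not a priori bounded. It is controlled by Gagliardo--Nirenberg as $\|u_n\|_s\le C\|\nabla u_n^{(m+\rho-1)/2}\|_2^{2\theta_4/(m+\rho-1)}B^{1-\theta_4}$. The resulting exponent satisfies $q_0\theta_4/(m+\rho-1)<1$, which is precisely where $m>m_*$ is used. Hence $\sum_n(1+\tau)^{q_0n}\|u_{n-1}\|_s^{q_0}$ can be absorbed into the weighted dissipation on the left-hand side.

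This feedback of the time-integrated dissipation into the source of the heat equations is what produces the threshold $m_*$. The extra $\overline{w}_{n-1}$ and $\overline{v}_{n-1}$ terms created by the rescaling are then handled by interpolation with \eqref{ph50} and with the $L^2$ bound on $v_n-\Delta v_n$ from Corollary~\ref{cords04}, again followed by absorption.
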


\begin{remark}
Time weighted estimates, combined with maximal regularity, are used in \cite[Proposition~3.2]{IsYo2020} as an intermediate step to establish $L^\rho$-boundedness for $u$ for $\rho\in (1,\infty)$. Taking into account that
\begin{equation*}
(1+\tau)^{-q_0 (N-n)}\approx e^{-q_0\tau(N-n)}\quad \text{as}~~\tau \to 0,
\end{equation*}
the second term on the left-hand side of~\eqref{ph24} reads
\begin{align*}
\tau \sum_{n=1}^N (1+\tau)^{-q_0(N-n)}\big\|\nabla u_n^{(m+\rho-1)/2}\big\|_2^2
\approx \int_0^t e^{-q_0(t-s)} \big\|\nabla u(s)^{(m+\rho-1)/2}\big\|_2^2\,ds
\end{align*}
at the continuous level and matches \cite[Equation~(36)]{IsYo2020}. We develop a discrete analogue of this approach in the proof of Lemma~\ref{lemds04}.
\end{remark}

\begin{proof} Let $\rho>m$. We begin with the derivation of an estimate on $\|u_n\|_\rho$ and $\big\|\nabla u_n^{(m+\rho-1)/2}\big\|_2$ in terms of $\|u_{n-1}\|_\rho$ and $\big\|u_n^\rho (v_n-\Delta v_n)\big\|_{1}$. This step is achieved with the help of the flow interchange method \cite{MMS2009}. In a second step, we use a variant of the Gagliardo-Nirenberg inequality to improve the previous estimate to a control of $\|u_n\|_\rho$ and $\big\|\nabla u_n^{(m+\rho-1)/2}\big\|_2$ in terms of $\|u_{n-1}\|_\rho$ and $\|v_n-\Delta v_n\|_{r/(r-1)}$ for suitably chosen values of $r\in (1,\infty)$. We next use Lemma~\ref{lemds02a} to estimate $\|v_n-\Delta v_n\|_{r/(r-1)}$ in terms of suitable norms involving $(u_n)_{n\ge 1}$.
	
\medskip 
	
\noindent\textbf{Step~1.} Let $n\ge 1$ and let $U$ be the unique weak solution to the porous medium equation
\begin{equation}
\begin{split}
	\partial_t U & = \Delta U^\rho \;\;\text{ in }\;\; (0,\infty)\times\mathbb{R}^d, \\
	U(0) & = u_n \;\;\text{ in }\;\; \mathbb{R}^d.
\end{split}\label{ph25}
\end{equation}
Classical properties of~\eqref{ph01} entail that $U(t)\in (\mathcal{P}_2\cap L^m)(\mathbb{R}^d)$ for all $t\ge 0$ and that
\begin{equation}
	\|U(t_2)\|_\rho \le \|U(t_1)\|_\rho, \qquad t_2 \ge t_1 \ge 0. \label{ph26}
\end{equation}
Moreover, since $\rho>1>(d-1)/d$, the porous medium equation~\eqref{ph25} can be interpreted as a gradient flow of the functional
\begin{equation*}
	\mathcal{F}[z] := \frac{\|z\|_\rho^\rho}{\rho-1}, \qquad z\in \mathcal{P}_2(\mathbb{R}^d)\cap L^\rho(\mathbb{R}^d),
\end{equation*}
for the Wasserstein distance $\mathcal{W}_2$ in $\mathcal{P}_2(\mathbb{R}^d)$, see \cite[Theorem~11.2.5]{AGS2005} for instance. Moreover, by \cite[Theorem~11.1.4]{AGS2005}, 
\begin{equation*}
	\frac{1}{2} \left[ \mathcal{W}_2^2(U(t),u_{n-1}) - \mathcal{W}_2^2(u_n,u_{n-1})
\right] \le \int_0^t \big(\mathcal{F}[u_{n-1}] - \mathcal{F}[U(s)]\big)\ ds, \qquad t\ge 0,
\end{equation*}
from which we deduce, in view of the time monotonicity property~\eqref{ph26}, that
\begin{equation}
	\frac{1}{2\tau} \left[ \mathcal{W}_2^2(U(t),u_{n-1}) - \mathcal{W}_2^2(u_n,u_{n-1})
	\right] \le \frac{t}{\tau(\rho-1)} \big(\|u_{n-1}\|_{\rho}^{\rho} - \|U(t)\|_\rho^\rho \big), \qquad t\ge 0. \label{ph27}
\end{equation}
We next infer from the definition~\eqref{ph02d} of $\mathcal{E}$,~\eqref{ph25} and the non-negativity of $U$ and $v_n$ that
\begin{align*}
	\frac{d}{dt} \mathcal{E}[U,v_n] & = \frac{mM^{m-1}}{m-1} \int_{\mathbb{R}^d} U^{m-1} \Delta U^\rho\ dx - M \int_{\mathbb{R}^d} v_n \Delta U^\rho\ dx \\
	& = - \frac{4m\rho M^{m-1}}{(m+\rho-1)^2} \big\|\nabla U^{(m+\rho-1)/2}\big\|_2^2 - M\int_{\mathbb{R}^d} U^\rho \Delta v_n\ dx \\
	& \le - \frac{4m\rho M^{m-1}}{(m+\rho-1)^2} \big\|\nabla U^{(m+\rho-1)/2}\big\|_2^2 + M\int_{\mathbb{R}^d} U^\rho (v_n - \Delta v_n)\ dx.
\end{align*}
Integrating with respect to time gives
\begin{equation}
\begin{split}
	\mathcal{E}[U(t),v_n] & \le \mathcal{E}[u_n,v_n] - \frac{4m\rho M^{m-1}}{(m+\rho-1)^2} \int_0^t \big\|\nabla U^{(m+\rho-1)/2}(s)\big\|_2^2\ ds \\
	& \qquad + M\int_0^t \int_{\mathbb{R}^d} U^\rho(s) (v_n - \Delta v_n)\ dxds, \qquad t\ge 0.
\end{split}\label{ph28}
\end{equation}
Now, $t\mapsto \|U(t)\|_m$ is a non-increasing function, so that $\|U(t)\|_m\le \|u_n\|_m\le B$ and thus $U(t)\in\mathcal{U}_m$ for $t\ge 0$. Therefore, the definition~\eqref{ph02c} of $u_n$ implies that
\begin{equation*}
	\frac{1}{2\tau} \mathcal{W}_2^2(u_n,u_{n-1}) + \mathcal{E}[u_n,v_n] \le \frac{1}{2\tau} \mathcal{W}_2^2(U(t),u_{n-1}) + \mathcal{E}[U(t),v_n], \qquad t\ge 0,
\end{equation*}
and we deduce from~\eqref{ph27}, \eqref{ph28} and the above inequality that, for $t\ge 0$, 
\begin{align*}
	\frac{1}{2\tau} \mathcal{W}_2^2(u_n,u_{n-1}) + \mathcal{E}[u_n,v_n] & \le \frac{1}{2\tau} \mathcal{W}_2^2(u_n,u_{n-1}) + \frac{t}{\tau(\rho-1)} \big(\|u_{n-1}\|_{\rho}^{\rho} - \|U(t)\|_\rho^\rho \big)\\
	& \qquad  + \mathcal{E}[u_n,v_n] - \frac{4m\rho M^{m-1}}{(m+\rho-1)^2} \int_0^t \big\|\nabla U^{(m+\rho-1)/2}(s)\big\|_2^2\ ds \\
	& \qquad + M\int_0^t \int_{\mathbb{R}^d} U^\rho(s) (v_n - \Delta v_n)\ dxds.
\end{align*}
Hence, after dividing by $t$,
\begin{align*}
	\frac{4m\rho M^{m-1}}{(m+\rho-1)^2 t} \int_0^t \big\|\nabla U^{(m+\rho-1)/2}(s)\big\|_2^2\ ds & \le \frac{1}{\tau(\rho-1)} \big(\|u_{n-1}\|_{\rho}^{\rho} - \|U(t)\|_\rho^\rho \big) \\
	& \qquad + \frac{M}{t} \int_0^t \int_{\mathbb{R}^d} U^\rho(s) (v_n - \Delta v_n)\ dxds.
\end{align*}
Since $U\in C\big([0,\infty),L^\rho(\mathbb{R}^d)\big)$, we proceed as in the proof of Step~3 in \cite[Proposition~8]{BlLa2013} to take the limit $t\to 0$ in the above inequality and conclude that
\begin{equation}
	\frac{4m\rho M^{m-1}}{(m+\rho-1)^2} \big\|\nabla u_n^{(m+\rho-1)/2}\big\|_2^2 \le \frac{\|u_{n-1}\|_{\rho}^{\rho} - \|u_n\|_\rho^\rho}{\tau(\rho-1)} + M\int_{\mathbb{R}^d} u_n^\rho (v_n - \Delta v_n)\ dx. \label{ph29}
\end{equation}

\medskip 

\noindent\textbf{Step~2.} Now, let $r\in (1,\infty)$ to be specified later satisfying
\begin{equation}
	m \le \rho r \le \frac{d}{d-2} (m+\rho-1) \;\;\text{ for }\;\; d\ge 3, \quad m\le \rho r \;\;\text{ for }\;\; d\in\{1,2\}, \label{ph30}
\end{equation}
and set $r'=r/(r-1)\in (1,\infty)$. By H\"older's inequality,
\begin{equation*}
	\left| \int_{\mathbb{R}^d} u_n^\rho (v_n - \Delta v_n)\ dx \right| \le \|u_n\|_{r\rho}^\rho \|v_n-\Delta v_n\|_{r'}
\end{equation*} 
and it follows from \cite[Lemma~2.4]{Sug2006} (with $q_1=m$, $q_2=r\rho$ and $r=\rho$), \eqref{ph30} and the above inequality that
\begin{equation}
	\left| \int_{\mathbb{R}^d} u_n^\rho (v_n - \Delta v_n)\ dx \right| \le C(r,\rho) \big\|\nabla u_n^{(m+\rho-1)/2}\big\|_2^{2\delta_0} \|u_n\|_m^{(1-\theta_0)\rho} \|v_n-\Delta v_n\|_{r'}, \label{ph31}
\end{equation}
where $\delta_0 := \theta_0\rho/(m+\rho-1)$ and
\begin{align*}
	\frac{2\theta_0}{m+\rho-1} & = \left( \frac{1}{m} - \frac{1}{r\rho} \right) \left( \frac{1}{d} - \frac{1}{2} + \frac{m+\rho-1}{2m} \right)^{-1} \\
	& = \frac{r\rho-m}{mr\rho} \frac{2md}{2m+d(\rho-1)} = \frac{2d(r\rho - m)}{r\rho [2m + d(\rho-1)]}.
\end{align*}
Since
\begin{equation}
	\frac{1}{q_0} := 1 - \delta_0 = 1 - \frac{\theta_0\rho}{m+\rho-1} = \frac{(2m-d)r+md}{r[2m+d(\rho-1)]} \label{delta0}
\end{equation}
is a decreasing function of $r$, we infer from~\eqref{ph30} that
\begin{align*}
	\frac{1}{q_0} = 1 - \delta_0 & \ge \frac{d(2m-d)(m+\rho-1) + md(d-2)\rho}{d(m+\rho-1)[2m+d(\rho-1)]} \\
	& = \frac{(m-1)[d(\rho-1)+2m]}{(m+\rho-1)[2m+d(\rho-1)]} > 0.
\end{align*}
Therefore, $\delta_0\in (0,1)$ (or, equivalently $q_0\in (1,\infty)$) and we may use Young's inequality in~\eqref{ph31}, along with~\eqref{ph23}, to obtain
\begin{align*}
	\left| \int_{\mathbb{R}^d} u_n^\rho (v_n - \Delta v_n)\ dx \right| & \le \frac{2m\rho M^{m-1}}{(m+\rho-1)^2} \big\|\nabla u_n^{(m+\rho-1)/2}\big\|_2^2 \\
	& \quad + C(r,\rho) \|u_n\|_m^{(1-\theta_0)\rho/(1-\delta_0)} \|v_n-\Delta v_n\|_{r'}^{1/(1-\delta_0)} \\
	& \le \frac{2m\rho M^{m-1}}{(m+\rho-1)^2} \big\|\nabla u_n^{(m+\rho-1)/2}\big\|_2^2 + C(r,\rho,B) \|v_n-\Delta v_n\|_{r'}^{q_0}.
\end{align*}
Inserting the above estimate in~\eqref{ph29} leads us to
\begin{equation*}
	\frac{2m\rho M^{m-1}}{(m+\rho-1)^2} \big\|\nabla u_n^{(m+\rho-1)/2}\big\|_2^2 \le \frac{\|u_{n-1}\|_{\rho}^{\rho} - \|u_n\|_\rho^\rho}{\tau(\rho-1)} + C(r,\rho,B) \|v_n-\Delta v_n\|_{r'}^{q_0},
\end{equation*}
recalling that $q_0=1/(1-\delta_0)\in (1,\infty)$ is defined in~\eqref{delta0}. Multiplying both sides of the above inequality by $(1+\tau)^{q_0 n}$ and summing up with respect to $n\in \{1,\dots,N\}$, $N\ge 1$, we find,  
\begin{align*}
	&	\frac{2m\rho M^{m-1}}{(m+\rho-1)^2}\sum_{n=1}^N (1+\tau)^{q_0 n} \big\|\nabla u_n^{(m+\rho-1)/2}\big\|_2^2\\ 
	\le \,& \sum_{n=1}^N (1+\tau)^{q_0 n} \frac{\|u_{n-1}\|_{\rho}^{\rho} - \|u_n\|_\rho^\rho}{\tau(\rho-1)} + C(r,\rho,B) \sum_{n=1}^N (1+\tau)^{q_0 n} \|v_n-\Delta v_n\|_{r'}^{q_0} \\
	=\,&\frac{1}{\tau(\rho-1)} \left[ \sum_{n=0}^{N-1} (1+\tau)^{q_0 (n+1)} \|u_n\|_\rho^\rho - \sum_{n=1}^N (1+\tau)^{q_0 n} \|u_{n}\|_{\rho}^{\rho}  \right] \\
	&+ C(r,\rho,B) \sum_{n=1}^N (1+\tau)^{q_0 n} \|v_n-\Delta v_n\|_{r'}^{q_0} \\
	=\,&\frac{1}{\tau(\rho-1)}\left[ \big[(1+\tau)^{q_0}-1\big] \sum_{n=1}^{N-1} (1+\tau)^{q_0 n} \|u_{n}\|_\rho^\rho + (1+\tau)^{q_0} \|u_{0}\|_{\rho}^{\rho} - (1+\tau)^{q_0 N} \|u_N\|_\rho^\rho \right]
	\\
	& + C(r,\rho,B) \sum_{n=1}^N (1+\tau)^{q_0 n} \|v_n-\Delta v_n\|_{r'}^{q_0}.
	\eqntag
	\label{ph32}
\end{align*}

\medskip 

\noindent\textbf{Step~3.} Let us first consider the case $m\ge d/2$. In order to use the analysis performed in \textbf{Step~2}, we choose $r=m/(m-1)$ (and thus $r'=m$) in that case and first check that this choice of $r$ implies that~\eqref{ph30} is satisfied. This property is actually obvious in that case, since
\begin{equation*}
	\frac{\rho m}{m-1} - m = \frac{m(\rho-m+1)}{m-1}>0
\end{equation*}
and, when $d\ge 3$,
\begin{equation*} 
	\frac{d(\rho+m-1)}{d-2} - \frac{\rho m}{m-1} = \frac{\rho(2m-d)+d(m-1)^2}{(m-1)(d-2)} > 0
\end{equation*}
due to $\rho> m\ge d/2$ and $m>1$. With this choice of $r$, the parameter $q_0$ defined in~\eqref{delta0} is given by
\begin{equation*}
	q_0 = \frac{2m+d(\rho-1)}{m(d+2)-2d} > 1.
\end{equation*}
We next define
\begin{equation*}
	\overline{u}_n:=(1+\tau)^{n} u_n,\quad \overline{v}_n:=(1+\tau)^{ n} v_n, \quad \overline{w}_n:=(1+\tau)^{ n} w_n, \qquad n\in\mathbb{N}.
\end{equation*}
It follows from~\eqref{ph04a} and~\eqref{ph04b} that, for $n\ge 1$, $\overline{v}_n$ and $\overline{w}_n$ solve
\begin{align*}
	\frac{\overline{v}_n-\overline{v}_{n-1}}{\tau}-\Delta \overline{v}_n+\overline{v}_n
	=\overline{w}_n+\overline{v}_{n-1} \;\;\text{ in }\;\; \mathbb{R}^d
	\eqntag
	\label{eqn;scheme-v}
\end{align*}
and
\begin{align*}
	\frac{\overline{w}_n-\overline{w}_{n-1}}{\tau}-\Delta \overline{w}_n+\overline{w}_n=(1+\tau)\overline{u}_{n-1}+\overline{w}_{n-1} \;\;\text{ in }\;\; \mathbb{R}^d,
	\eqntag
	\label{eqn;scheme-w}
\end{align*}
respectively. Since $r'=m$, we infer from~\eqref{zLs02} (with $z_n=\overline{v}_n$, $f_n=\overline{w}_n+\overline{v}_{n-1}$, $q=q_0$ and $s_0=m$) and~\eqref{eqn;scheme-v} that, for $N\ge 1$,
\begin{align*}
	& \left( \sum_{n=1}^N (1+\tau)^{q_0 n}\|v_n-\Delta v_n\|_{m}^{q_0} \right)^{1/q_0} =\, \left( \sum_{n=1}^N \|\overline{v}_n-\Delta \overline{v}_n\|_{m}^{q_0} \right)^{1/q_0} \\
	\le\, & C_0(q_0,m) \left[ \left( \sum_{n=1}^N \|\overline{w}_n + \overline{v}_{n-1}\|_{m}^{q_0} \right)^{1/q_0} + \tau^{-1/q_0} \big\|v_0 - \Delta v_0\big\|_{m} \right] \\
	\le\, & C(\rho) \left[ \left( \sum_{n=1}^N \|\overline{w}_n\|_m^{q_0} \right)^{1/q_0} + \left( \sum_{n=1}^N \|\overline{v}_{n-1}\|_{m}^{q_0} \right)^{1/q_0} + \tau^{-1/q_0} \big\|v_0 - \Delta v_0\big\|_{m} \right].
\end{align*}
We next deduce from~\eqref{zLs01} (with $z_n=\overline{w}_n$, $f_n = (1+\tau) \overline{u}_{n-1} + \overline{w}_{n-1}$, $q=q_0$ and $s_0=m$) and~\eqref{eqn;scheme-w} that, as $1+\tau\le 2$,
\begin{align*}
	& \left( \sum_{n=1}^N (1+\tau)^{q_0 n}\|v_n-\Delta v_n\|_{m}^{q_0} \right)^{1/q_0} \\
	\le\,& C(\rho) \left[ \left( \sum_{n=1}^N \|(1+\tau)\overline{u}_{n-1} + \overline{w}_{n-1}\|_{m}^{q_0} \right)^{1/q_0} + \tau^{-1/q_0} \|w_0\|_m \right] \\
	& + C(\rho) \left[ \left( \sum_{n=1}^N \|\overline{v}_{n-1}\|_{m}^{q_0} \right)^{1/q_0} + \tau^{-1/q_0} \left(\big\|v_0 - \Delta v_0\big\|_{m} \right) \right] \\ 
	\le\, & C(\rho) \left[ 2 \left( \sum_{n=1}^N \|\overline{u}_{n-1}\|_{m}^{q_0} \right)^{1/q_0} + \left( \sum_{n=1}^N \|\overline{w}_{n-1}\|_{m}^{q_0} \right)^{1/q_0} + \left( \sum_{n=1}^N \|\overline{v}_{n-1}\|_{m}^{q_0} \right)^{1/q_0}\right] \\
	& + C(\rho) \left[ \tau^{-1/q_0} \|w_0\|_m + \tau^{-1/q_0} \left(\big\|v_0 - \Delta v_0\big\|_{m} \right) \right] \\ 
	\le\,& C(\rho) \left[ \left(\sum_{n=1}^N (1+\tau)^{q_0(n-1)}  \|u_{n-1}\|_{m}^{q_0} \right)^{1/q_0} + \left(\sum_{n=1}^N (1+\tau)^{q_0(n-1)} \|w_{n-1}\|_{m}^{q_0} \right)^{1/q_0} \right] \\
	& + C(\rho) \left[ \left(\sum_{n=1}^N (1+\tau)^{q_0(n-1)} \|v_{n-1}\|_{m}^{q_0} \right)^{1/q_0} +\tau^{-1/q_0} \right].
	\eqntag
	\label{eqn;mr1}
\end{align*}	
At this point, it follows from~\eqref{ph23} and~\eqref{zLs03} (applied to $z_n=w_n$, $f_n=u_n-1$ and $z_n=v_n$, $f_n=w_n$ with $s_0=m$) that, for $n\ge 1$,
\begin{equation*}
	\|u_{n-1}\|_m \le B, \quad \|w_{n-1}\|_m \le \max\big\{ \|w_0\|_m , B \big\}
\end{equation*}
and
\begin{equation*}
	\|v_{n-1}\|_m \le \max\big\{ \|v_0\|_m , \max_{0\le j\le n-1} \|w_j\|_m \big\} \le \max\big\{ \|v_0\|_m, \|w_0\|_m, B \big\}.
\end{equation*}
Consequently, we infer from~\eqref{eqn;mr1} that
\begin{align*}
	\left( \sum_{n=1}^N (1+\tau)^{q_0 n}\|v_n-\Delta v_n\|_{m}^{q_0} \right)^{1/q_0} & \le C(\rho,B) \left[ \left( \sum_{n=1}^N (1+\tau)^{q_0(n-1)} \right)^{1/q_0} + \tau^{-1/q_0} \right] \\
	& = C(\rho,B) \left[ \left( \frac{(1+\tau)^{q_0 N}- 1}{(1+\tau)^{q_0}-1} \right)^{1/q_0} + \tau^{-1/q_0} \right] \\
	& \le C(\rho,B) \left[ (1+\tau)^N + 1 \right] \tau^{-1/q_0},
	\eqntag
	\label{eqn;mr2}
\end{align*}
after using the elementary inequality $(1+\tau)^{q_0} - 1\ge q_0 \tau > \tau$. Recalling that~\eqref{ph30} is satisfied, we now combine~\eqref{ph32} and~\eqref{eqn;mr2} to obtain
\begin{align*}
	& \frac{2m\rho M^{m-1}}{(m+\rho-1)^2}\sum_{n=1}^N (1+\tau)^{q_0 n} \big\|\nabla u_n^{(m+\rho-1)/2}\big\|_2^2 \\
	\le \,&\frac{1}{\tau(\rho-1)}\left[ \big[(1+\tau)^{q_0}-1\big] \sum_{n=1}^{N-1} (1+\tau)^{q_0 n} \|u_n\|_\rho^\rho + (1+\tau)^{q_0}\|u_{0}\|_{\rho}^{\rho} - (1+\tau)^{q_0 N} \|u_N\|_\rho^\rho \right]
	\\
	&+ \frac{C(\rho,B)}{\tau} \left[ (1+\tau)^N + 1 \right]^{q_0},
\end{align*}	
whence
\begin{align*}
(1+\tau)^{q_0 N} \|u_N\|_\rho^\rho &+ \frac{2m\rho(\rho-1) M^{m-1}}{(m+\rho-1)^2} \tau \sum_{n=1}^N (1+\tau)^{q_0 n}\big\|\nabla u_n^{(m+\rho-1)/2}\big\|_2^2 
\\
\le\,& \big[(1+\tau)^{q_0}-1\big] \sum_{n=1}^{N-1} (1+\tau)^{q_0 n} \|u_n\|_\rho^\rho + (1+\tau)^{q_0} \|u_{0}\|_{\rho}^{\rho} 
+ C(\rho,B) (1+\tau)^{q_0 N}.
\eqntag
\label{ph33} 
\end{align*}
We are left with exploiting the structure of~\eqref{ph33} to derive a bound on $\|u_N\|_\rho$ which does not depend on $N\ge 1$. For that purpose, since
\begin{equation*}
	m < \rho \le \frac{d(m+\rho-1)}{d-2}\quad(d\ge3),
	\quad m < \rho\quad(d\in\{1,2\})
\end{equation*}
we may apply \cite[Lemma~2.4]{Sug2006} (with $q_1=m$, $q_2=r=\rho$) to obtain
\begin{equation}
	\|u_n\|_\rho^\rho\le\,C(\rho) \|u_n\|_m^{\rho(1-\theta_1)}\|\nabla u_n^{(\rho+m-1)/2} \|_2^{2\delta_1}, \label{ph33b}
\end{equation}
where $\delta_1:=\theta_1\rho/(\rho+m-1)$ and
\begin{align*}
	\frac{2\theta_1}{\rho+m-1}:=\,& \left(\frac1m-\frac1{\rho}\right) \left(\frac{1}{ d}-\frac12+\frac{\rho+m-1}{2m}\right)^{-1}
	\\
	=\,&\frac{\rho-m}{m\rho} \frac{2md}{2m+d(\rho-1)}=\,\frac{2d(\rho-m)}{\rho[2m+d(\rho-1)]} >0.
\end{align*}
Observing that
\begin{equation*}
	1-\delta_1= 1-\frac{\theta_1 \rho}{\rho+m-1} = \frac{2m+d(m-1)}{2m+d(\rho-1)}>0,
\end{equation*}
it follows from~\eqref{ph23}, \eqref{ph33b} and Young's inequality that
\begin{align*}
	&\big[(1+\tau)^{q_0}-1\big] \sum_{n=1}^{N-1} (1+\tau)^{q_0 n} \|u_n\|_\rho^\rho
	\\
	\le\,& \big[(1+\tau)^{q_0}-1\big] C(\rho) B^{\rho(1-\theta_1)} \sum_{n=1}^{N} (1+\tau)^{q_0 n} \|\nabla u_n^{(\rho+m-1)/2}\|_2^{2\delta_1} \\
	\le\,& \frac{m\rho(\rho-1) M^{m-1}}{(m+\rho-1)^2} \tau \sum_{n=1}^{N} (1+\tau)^{q_0 n} \|\nabla u_n^{(\rho+m-1)/2}\|_2^2 \\
	& + C(\rho,B) \big[(1+\tau)^{q_0}-1\big]^{1/(1-\delta_1)} \tau^{-\delta_1/(1-\delta_1)} \sum_{n=1}^{N-1} (1+\tau)^{q_0 n} \\
	\le\,& \frac{m\rho(\rho-1) M^{m-1}}{(m+\rho-1)^2} \tau \sum_{n=1}^{N} (1+\tau)^{q_0 n} \|\nabla u_n^{(\rho+m-1)/2}\|_2^2 \\
	& + C(\rho,B) \left[ \frac{(1+\tau)^{q_0}-1}{\tau} \right]^{\delta_1/(1-\delta_1)} \big[ (1+\tau)^{q_0 N} - 1\big].
\end{align*}
Since $(1+\tau)^{q_0}-1 \le q_0 (1+\tau)^{q_0-1} \tau \le q_0 2^{q_0-1} \tau$, we conclude that 
\begin{align*}
	\big[(1+\tau)^{q_0}-1\big] \sum_{n=1}^{N-1} (1+\tau)^{q_0 n} \|u_n\|_\rho^\rho
	\le\,& \frac{m\rho(\rho-1) M^{m-1}}{(m+\rho-1)^2} \tau \sum_{n=1}^{N} (1+\tau)^{q_0 n} \|\nabla u_n^{(\rho+m-1)/2}\|_2^2 \\
	& + C(\rho,B) (1+\tau)^{q_0 N}.
	\eqntag
	\label{eqn;unrho}
\end{align*}
Combining~\eqref{ph33} and~\eqref{eqn;unrho} leads us to
\begin{align*}
	(1+\tau)^{q_0 N} \|u_N\|_\rho^\rho &+ \frac{m\rho(\rho-1) M^{m-1}}{(m+\rho-1)^2} \tau \sum_{n=1}^N (1+\tau)^{q_0 n}\big\|\nabla u_n^{(m+\rho-1)/2}\big\|_2^2 
	\\
	\le\,& (1+\tau)^{q_0} \|u_{0}\|_{\rho}^{\rho}  + C(\rho,B) (1+\tau)^{q_0 N},
\end{align*}
whence
\begin{align*}
	\|u_N\|_\rho^\rho +\, & \frac{m\rho(\rho-1) M^{m-1}}{(m+\rho-1)^2} \tau \sum_{n=1}^N (1+\tau)^{-q_0(N-n)} \big\|\nabla u_n^{(m+\rho-1)/2}\big\|_2^2 \\
	& \le\, (1+\tau)^{-q_0(N-1)} \|u_{0}\|_{\rho}^{\rho} +C(\rho,B) \le\, C(\rho,B),
\end{align*}
and we have proved~\eqref{ph24} in that case.

\medskip 

\noindent\textbf{Step~4.} We finally turn to the case $m\in (1,d/2)$ when $d\in\{3,4\}$ and $m\in (m_*,d/2)$ when $d\ge 5$. In this step only, we assume additionally that $\rho> \max\{m,(d-2)/2\}$ and pick $s\in (1,d/2)$ such that
\begin{equation}
	\max\left\{ \frac{d}{4}, m \right\} < s < \frac{d}{2} \le \frac{d(m+\rho-1)}{d-2}, \label{ph34}
\end{equation}
which is possible in view of the constraint $\rho>(d-2)/2$. We then choose 
\begin{equation}
	1 < r = \frac{ds}{s(d+2)-d} \le r'=\frac{ds}{d-2s}, \label{rs}
\end{equation}
the ordering $r'>r$ being due to $s>m>m_*$ when $d\ge 5$ and to $s>m>1$ when $d\in\{3,4\}$. With this choice of $r$, the parameter $q_0$ defined in~\eqref{delta0} is given by
\begin{equation*}
	q_0 = \frac{s[2m+d(\rho-1)]}{s[m(d+4)-d]-md}>1.
\end{equation*}
As in the derivation of~\eqref{eqn;mr1}, we first infer from~\eqref{eqn;scheme-v} and~\eqref{zLs02} (with $z_n=\overline{v}_n$, $f_n=\overline{w}_n + \overline{v}_{n-1}$, $q=q_0$ and $s_0=s$) that, for $N\ge 1$,
\begin{align*}
	& \left( \sum_{n=1}^N (1+\tau)^{q_0 n} \|v_n-\Delta v_n\|_{r'}^{q_0} \right)^{1/q_0} \\
	&\le\, C_0(q_0,r') \left[ \left( \sum_{n=1}^N (1+\tau)^{q_0 n} \|w_n\|_{r'}^{q_0} \right)^{1/q_0} + \left( \sum_{n=1}^N (1+\tau)^{q_0 n} \|v_{n-1}\|_{r'}^{q_0} \right)^{1/q_0} \right] \\
	& \quad + C_0(q_0,r') \tau^{-1/q_0} \|v_0-\Delta v_0\|_{r'}.
\end{align*}
Owing to the continuous embedding of $W^{2,s}(\mathbb{R}^d)$ in $L^{r'}(\mathbb{R}^d)$, there is a positive constant $C(s)$ depending only on $d$ and $s$ such that
\begin{equation*}
	\|w_n\|_{r'} \le C(s) \|w_n-\Delta w_n\|_{s}, \qquad n\ge 1,
\end{equation*}
and it follows from~\eqref{eqn;scheme-w} and~\eqref{zLs02} (with $z_n=\overline{w}_n$, $f_n= (1+\tau) \overline{u}_{n-1} + \overline{w}_{n-1}$, $q=q_0$ and $s_0=s$) that, for $N\ge 1$,
\begin{align*}
	& \left( \sum_{n=1}^N (1+\tau)^{q_0 n} \|w_n\|_{r'}^{q_0} \right)^{1/q_0} \le\, C(s) \left( \sum_{n=1}^N (1+\tau)^{q_0 n} \|w_n-\Delta w_n\|_{s}^{q_0} \right)^{1/q_0} \\
	&\le\, C(s) C_0(q_0,s) \left[ \left( \sum_{n=1}^N (1+\tau)^{q_0 n} \|u_{n-1}\|_{s}^{q_0} \right)^{1/q_0} + \left( \sum_{n=1}^N (1+\tau)^{q_0 (n-1)} \|w_{n-1}\|_{s}^{q_0} \right)^{1/q_0} \right] \\
	& \quad + C(s) C_0(q_0,s) \tau^{-1/q_0} \|w_0-\Delta w_0\|_{s}.
\end{align*}
Combining the previous two inequalities, we obtain 
\begin{align*}
	& \left( \sum_{n=1}^N (1+\tau)^{q_0 n} \|v_n-\Delta v_n\|_{r'}^{q_0} \right)^{1/q_0} \\
	&\le\, C(\rho,s) \left[ \left( \sum_{n=1}^N (1+\tau)^{q_0 (n-1)} \|u_{n-1}\|_{s}^{q_0} \right)^{1/q_0} + \left( \sum_{n=1}^N (1+\tau)^{q_0 (n-1)} \|w_{n-1}\|_{s}^{q_0} \right)^{1/q_0} \right] \\
	& \quad + C(\rho,s) \left[ \left( \sum_{n=1}^N (1+\tau)^{q_0 (n-1)} \|v_{n-1}\|_{r'}^{q_0} \right)^{1/q_0} + \tau^{-1/q_0} \right].
\eqntag
\label{eqn;Ls}
\end{align*}
Let us first estimate the sum involving $(w_{n-1})_{1\le n\le N}$ in~\eqref{eqn;Ls}. Since $m<s<r'=ds/(d-2s)$ and $W^{2,s}(\mathbb{R}^d)$ embeds continuously in $L^{r'}(\mathbb{R}^d)$, we infer from H\"older's and Sobolev's inequalities that, setting $\theta_2 := [r'(s-m)]/[s(r'-m)]\in (0,1)$, 
\begin{align*}
	\sum_{n=1}^N (1+\tau)^{q_0 (n-1)}\|w_{n-1}\|_s^{q_0}\le\,&\sum_{n=1}^N (1+\tau)^{q_0 (n-1)}\|w_{n-1}\|_m^{q_0(1-\theta_2)} \| w_{n-1}\|_{r'}^{q_0 \theta_2}
	\\
	\le\,& C(\rho,s) \sum_{n=1}^{N} (1+\tau)^{q_0 (n-1)} \|w_{n-1}\|_{m}^{q_0(1-\theta_2)} \|w_{n-1}-\Delta w_{n-1}\|_{s}^{q\theta_2}. 
	\eqntag
	\label{eqn;Ls-w0}
\end{align*}
On the one hand, we deduce from~\eqref{ph04b}, \eqref{ph23} and~\eqref{zLs03} (with $z_n=w_n$, $f_n=u_{n-1}$ and $s_0=m$) that
\begin{equation}
	\|w_{n-1}\|_m \le \max\{\|w_0\|_m,B\}, \qquad 1\le n \le N. \label{ph50}
\end{equation}
On the other hand, it follows from~\eqref{eqn;scheme-w} and~\eqref{zLs02} (with $z_n=\overline{w}_n$, $f_n=(1+\tau) \overline{u}_{n-1} + \overline{w}_{n-1}$, $q=q_0$ and $s_0=s$) that
\begin{align*}
	& \left( \sum_{n=1}^N (1+\tau)^{q_0 (n-1)} \|w_{n-1}-\Delta w_{n-1}\|_{s}^{q_0} \right)^{1/q_0} \\
	\le\, & \|w_0-\Delta w_0\|_s + \left( \sum_{n=1}^{N-1}(1+\tau)^{q_0 n} \|w_{n}-\Delta w_{n}\|_{s}^{q_0} \right)^{1/q_0} \\
	\le\, & \|w_0-\Delta w_0\|_s + C_0(q_0,s) \left[ \left( \sum_{n=1}^N \big\|(1+\tau) \overline{u}_{n-1} + \overline{w}_{n-1} \big\|_{s}^{q_0} \right)^{1/q_0} + \tau^{-1/q_0} \|w_0-\Delta w_0\|_s \right] \\
	\le\, & C(\rho,s) \left[ \left( \sum_{n=1}^N (1+\tau)^{q_0 (n-1)} \|u_{n-1}\|_{s}^{q_0} \right)^{1/q_0} + \left( \sum_{n=1}^N (1+\tau)^{q_0 (n-1)} \|w_{n-1}\|_{s}^{q_0} \right)^{1/q_0} + \tau^{-1/q_0} \right].
	\eqntag
	\label{ph51}
\end{align*}
Combining~\eqref{eqn;Ls-w0}, \eqref{ph50} and Young's inequality gives, for $\varepsilon>0$ yet to be determined, 
\begin{align*}
	\sum_{n=1}^N (1+\tau)^{q_0 (n-1)}\|w_{n-1}\|_s^{q_0}\le\,& C(\rho,s,B) \sum_{n=1}^N (1+\tau)^{q_0 (n-1)} \| w_{n-1} - \Delta w_{n-1}\|_{s}^{q_0 \theta_2} \\
	\le\,& \varepsilon^{q_0} \sum_{n=1}^N (1+\tau)^{q_0 (n-1)} \| w_{n-1} - \Delta w_{n-1}\|_{s}^{q_0} \\
	& + C(\rho,s,B,\varepsilon) \sum_{n=1}^N (1+\tau)^{q_0 (n-1)},
\end{align*}
whence, using now~\eqref{ph51},
\begin{align*}
	\left( \sum_{n=1}^N (1+\tau)^{q_0 (n-1)}\|w_{n-1}\|_s^{q_0} \right)^{1/q_0} \le\,& \varepsilon C(\rho,s) \left( \sum_{n=1}^N (1+\tau)^{q_0 (n-1)} \|u_{n-1}\|_{s}^{q_0} \right)^{1/q_0} \\
	& + \varepsilon C(\rho,s) \left( \sum_{n=1}^N (1+\tau)^{q_0 (n-1)} \|w_{n-1}\|_{s}^{q_0} \right)^{1/q_0} \\
	& + \varepsilon C(\rho,s) \tau^{-1/q_0} + C(\rho,s,B,\varepsilon) \left( \frac{(1+\tau)^{q_0 N}-1}{(1+\tau)^{q_0}-1} \right)^{1/q_0} \\
	\le\,& \varepsilon C(\rho,s) \left( \sum_{n=1}^N (1+\tau)^{q_0 (n-1)} \|u_{n-1}\|_{s}^{q_0} \right)^{1/q_0} \\
	& + \varepsilon C(\rho,s) \left( \sum_{n=1}^N (1+\tau)^{q_0 (n-1)} \|w_{n-1}\|_{s}^{q_0} \right)^{1/q_0} \\
	& + C(\rho,s,B,\varepsilon) (1+\tau)^N \tau^{-1/q_0}.
\end{align*}
Choosing $\varepsilon=1/[2C(\rho,s)]$ in the above inequality yields
\begin{equation}
\begin{split}
	\left( \sum_{n=1}^N (1+\tau)^{q_0 (n-1)}\|w_{n-1}\|_s^{q_0} \right)^{1/q_0} \le\,& \left( \sum_{n=1}^N (1+\tau)^{q_0 (n-1)} \|u_{n-1}\|_{s}^{q_0} \right)^{1/q_0} \\
	& + C(\rho,s,B) (1+\tau)^N \tau^{-1/q_0}. 
\end{split} \label{eqn;Ls-w}
\end{equation}

We now turn to the sum involving $(v_{n-1})_{1\le n\le N}$ in~\eqref{eqn;Ls} and first notice that the constraints~\eqref{ph34} on $s$ imply that $s>2d/(d+4)$ and thus $r'>2$. We split the analysis into two cases and first consider the case $s>2$. Then $s\in (2,r')$ and it follows from the continuous embedding of $W^{2,s}(\mathbb{R}^d)$ in $L^{r'}(\mathbb{R}^d)$, Corollary~\ref{cords04} and H\"older's and Young's inequalities that, for $\varepsilon>0$ yet to be determined and 
\begin{equation*}
	\theta_3 := \frac{r'(s-2)}{s(r'-2)} = \frac{d(s-2)}{s(d+4)-2d} \in (0,1),
\end{equation*}
\begin{align*}
	& \left( \sum_{n=1}^N (1+\tau)^{q_0(n-1)} \|v_{n-1}\|_{r'}^{q_0} \right)^{1/q_0} \\
	\le\; & C(s) \left( \sum_{n=1}^N (1+\tau)^{q_0(n-1)} \|v_{n-1}-\Delta v_{n-1}\|_{s}^{q_0} \right)^{1/q_0} \\
	\le\; & C(s) \left( \sum_{n=1}^N (1+\tau)^{q_0(n-1)} \|v_{n-1}-\Delta v_{n-1}\|_{2}^{q_0(1-\theta_3)} \|v_{n-1}-\Delta v_{n-1}\|_{r'}^{q_0\theta_3} \right)^{1/q_0} \\
	\le\; & C(\rho,s) \left( \sum_{n=1}^N (1+\tau)^{q_0(n-1)} \|v_{n-1}-\Delta v_{n-1}\|_{r'}^{q_0\theta_3} \right)^{1/q_0} \\
	\le\; & C(\rho,s) \left[ \|v_0-\Delta v_0\|_s + \left( \sum_{n=1}^{N-1} (1+\tau)^{q_0 n} \|v_{n}-\Delta v_{n}\|_{r'}^{q_0\theta_3} \right)^{1/q_0} \right] \\
	\le\; & C(\rho,s) \left[ 1 + \left( \varepsilon^{q_0} \sum_{n=1}^{N-1} (1+\tau)^{q_0 n} \|v_{n}-\Delta v_{n}\|_{r'}^{q_0} + C(\rho,s,\varepsilon) \sum_{n=1}^{N-1} (1+\tau)^{q_0 n} \right)^{1/q_0} \right] \\
	\le\; & \varepsilon C(\rho,s) \left( \sum_{n=1}^{N-1} (1+\tau)^{q_0 n} \|v_{n}-\Delta v_{n}\|_{r'}^{q_0} \right)^{1/q_0} + C(\rho,s,\varepsilon) \left[ 1 + \left( \frac{(1+\tau)^{q_0 N} - 1}{(1+\tau)^{q_0}-1} \right)^{1/q_0} \right] \\
	\le\; & \varepsilon C(\rho,s) \left( \sum_{n=1}^{N-1} (1+\tau)^{q_0 n} \|v_{n}-\Delta v_{n}\|_{r'}^{q_0} \right)^{1/q_0} + C(\rho,s,\varepsilon) (1+\tau)^N \tau^{-1/q_0}.
	\eqntag
	\label{eqn;Ls-v1}
\end{align*}
Next, if $s<2$, then $r'\in (2,2d/(d-4))$ for $d\ge 5$ and $r'\in (2,\infty)$ for $d\in\{3,4\}$, from which we deduce that $W^{2,2}(\mathbb{R}^d)$ is continuously embedded in $L^{r'}(\mathbb{R}^d)$. An immediate consequence of Corollary~\ref{cords04} is then that
\begin{align*}
	\left( \sum_{n=1}^N (1+\tau)^{q_0(n-1)} \|v_{n-1}\|_{r'}^{q_0} \right)^{1/q_0} \le\; & C(s) \left( \sum_{n=1}^N (1+\tau)^{q_0(n-1)} \|v_{n-1}-\Delta v_{n-1}\|_{2}^{q_0} \right)^{1/q_0} \\
	\le\; & C(\rho,s) \left( \sum_{n=1}^N (1+\tau)^{q_0(n-1)} \right)^{1/q_0} \\
	\le\; & C(\rho,s) (1+\tau)^N \tau^{-1/q_0}.
	\eqntag
	\label{eqn;Ls-v2}
\end{align*}
In view of~\eqref{eqn;Ls-v1} and~\eqref{eqn;Ls-v2}, we have shown that, for $\varepsilon>0$, 
\begin{align*}
	\left( \sum_{n=1}^N (1+\tau)^{q_0(n-1)} \|v_{n-1}\|_{r'}^{q_0} \right)^{1/q_0} \le\; & \varepsilon C(\rho,s) \left( \sum_{n=1}^{N-1} (1+\tau)^{q_0 n} \|v_{n}-\Delta v_{n}\|_{r'}^{q_0} \right)^{1/q_0} \\
	& + C(\rho,s,\varepsilon) (1+\tau)^N \tau^{-1/q_0}.
	\eqntag
	\label{eqn;Ls-v}
\end{align*}
Now, \eqref{eqn;Ls} is estimated by combining~\eqref{eqn;Ls-w} and~\eqref{eqn;Ls-v} as 
\begin{align*}
	\left( \sum_{n=1}^N (1+\tau)^{q_0 n} \|v_n-\Delta v_n\|_{r'}^{q_0} \right)^{1/q_0} 
	\le\; & \varepsilon C(\rho,s) \left( \sum_{n=1}^{N-1} (1+\tau)^{q_0n} \|v_n-\Delta v_n\|_{r'}^{q_0} \right)^{1/q_0} \\
	& + C(\rho,s) \left( \sum_{n=1}^N (1+\tau)^{q_0 n} \|u_{n-1}\|_{s}^{q_0} \right) \\
	& + C(\rho,s,B,\varepsilon) (1+\tau)^N \tau^{-1/q_0},
\end{align*}
and the choice $\varepsilon=1/[2C(\rho,s)]$ leads us to the estimate
\begin{align*}
	\left( \sum_{n=1}^N (1+\tau)^{q_0 n} \|v_n-\Delta v_n\|_{r'}^{q_0} \right)^{1/q_0} 
	\le\; & C(\rho,s) \left( \sum_{n=1}^N (1+\tau)^{q_0 n} \|u_{n-1}\|_{s}^{q_0} \right) \\
	& + C(\rho,s,B) (1+\tau)^N \tau^{-1/q_0}.
	\eqntag
	\label{eqn;Ls-u}
\end{align*}
We now aim at combining~\eqref{eqn;Ls-u} with~\eqref{ph32} to proceed further. Using the latter however requires to check that the choice~\eqref{rs} of $r$, along with~\eqref{ph34}, ensures the validity of~\eqref{ph30}. In terms of $s$, we need to guarantee the positivity of 
\begin{equation*}
	\frac{ds\rho}{s(d+2)-d - m} \;\;\text{ and }\;\; \frac{d}{d-2}(m+\rho-1) - \frac{\rho d s}{s(d+2)-d - m}.
\end{equation*}
To this end, we notice that the constraints $\rho>m$ and $s<d/2$ imply that
\begin{equation*}
	\frac{ds\rho}{s(d+2)-d} - m = \frac{sd(\rho-m)+m(d-2s)}{s(d+2)-d} \ge \frac{m(d-2s)}{s(d+2)-d}>0,
\end{equation*}
while it follows from the constraint $s>d/4$ that
\begin{align*}
	\frac{d(m+\rho-1)}{d-2} - \frac{ds\rho}{s(d+2)-d} & = \frac{d}{d-2} \frac{(m+\rho-1)[s(d+2)-d] -s\rho(d-2)}{s(d+2)-d} \\
	& = \frac{d}{d-2} \frac{(m-1)[s(d+2)-d] + \rho (4s-d)}{s(d+2)-d} > 0.
\end{align*}
Consequently, the condition~\eqref{ph30} is verified and, plugging~\eqref{eqn;Ls-u} in~\eqref{ph32}, we obtain
\begin{align*}	
	& \frac{2m\rho M^{m-1}}{(m+\rho-1)^2}\sum_{n=1}^N (1+\tau)^{q_0 n} \big\|\nabla u_n^{(m+\rho-1)/2}\big\|_2^2 \\
	\le \,& \frac{1}{\tau(\rho-1)} \left[ \big[(1+\tau)^{q_0}-1\big] \sum_{n=1}^{N-1} (1+\tau)^{q_0 n} \|u_{n}\|_\rho^\rho + (1+\tau)^{q_0} \|u_{0}\|_{\rho}^{\rho} - (1+\tau)^{q_0 N} \|u_N\|_\rho^\rho \right] \\
	&+ C(\rho,s,B) \left( \sum_{n=1}^{N-1} (1+\tau)^{q_0 n} \|u_{n}\|_{s}^{q_0} + (1+\tau)^{q_0 N} \tau^{-1}  \right).
	\eqntag
	\label{ph35}
\end{align*}
To estimate the right-hand side of~\eqref{ph35}, we use once more \cite[Lemma~2.4]{Sug2006} (with $q_1=m$, $q_2=s$ and $r=\rho$), which can be applied owing to~\eqref{ph34}, and obtain
\begin{equation}
	\|u_n\|_s \le C(\rho,s) \big\|\nabla u_n^{(m+\rho-1)/2}\big\|_2^{2\theta_4/(m+\rho-1)} \|u_n\|_m^{1-\theta_4}, \label{ph36}
\end{equation}
with $\theta_4\in (0,1)$ given by
\begin{align*}
	\frac{2\theta_4}{m+\rho-1} & := \left( \frac{1}{m} - \frac{1}{s} \right) \left( \frac{1}{d} - \frac{1}{2} + \frac{m+\rho-1}{2m} \right)^{-1} \\
	& = \frac{s-m}{ms} \frac{2md}{2m+d(\rho-1)} = \frac{2d(s-m)}{s[2m+d(\rho-1)]}.
\end{align*}
It then follows from~\eqref{ph23}, \eqref{ph35} and~\eqref{ph36} that
\begin{align*}
	& \frac{2m\rho M^{m-1}}{(m+\rho-1)^2} \sum_{n=1}^N (1+\tau)^{q_0 n} \big\|\nabla u_n^{(m+\rho-1)/2}\big\|_2^2\\
	\le\; & \frac{1}{\tau(\rho-1)}\left[ \big[(1+\tau)^{q_0}-1\big] \sum_{n=1}^{N-1} (1+\tau)^{q_0 n} \|u_{n}\|_\rho^\rho + (1+\tau)^{q_0} \|u_{0}\|_{\rho}^{\rho} - (1+\tau)^{q_0 N} \|u_N\|_\rho^\rho \right]\\
	&  + C(\rho,s,B) \left[ \sum_{n=1}^{N-1} (1+\tau)^{q_0 n} \big\|\nabla u_n^{(m+\rho-1)/2}\big\|_2^{2\theta_4 q_0/(m+\rho-1)} +(1+\tau)^{q_0 N} \tau^{-1} \right].
\end{align*}
By~\eqref{delta0} (with $r=ds/[s(d+2)-d]$) and~\eqref{rs}, 
\begin{align*}
	1 - \frac{q_0\theta_4}{m+\rho-1} =\; & 1 - \frac{d(s-m)}{s[m(d+4)-d]-md} = \frac{s(d+4)(m-m_*)}{(d+4)ms - d(m+s)}>0,
\end{align*}
so that another application of Young's inequality gives
\begin{align*}
	&\frac{2m\rho M^{m-1}}{(m+\rho-1)^2} \sum_{n=1}^N (1+\tau)^{q_0 n} \big\|\nabla u_n^{(m+\rho-1)/2}\big\|_2^2\\
	\le\; &\frac{1}{\tau(\rho-1)}\left[ \big[(1+\tau)^{q_0}-1\big] \sum_{n=1}^{N-1} (1+\tau)^{q_0 n} \|u_{n}\|_\rho^\rho + (1+\tau)^{q_0} \|u_{0}\|_{\rho}^{\rho} - (1+\tau)^{q_0 N} \|u_N\|_\rho^\rho \right]\\
	& + \frac{m\rho M^{m-1}}{2(m+\rho-1)^2} \sum_{n=1}^{N-1} (1+\tau)^{q_0 n} \big\|\nabla u_n^{(m+\rho-1)/2}\big\|_2^{2} \\
	& + C(\rho,s,B) \left( \sum_{n=1}^{N-1} (1+\tau)^{q_0 n} + (1+\tau)^{q_0 N} \tau^{-1} \right).
\end{align*}
Hence,
\begin{align*}
	&\frac{3m\rho M^{m-1}}{2(m+\rho-1)^2} \sum_{n=1}^N (1+\tau)^{q_0 n} \big\|\nabla u_n^{(m+\rho-1)/2}\big\|_2^2\\
	\le\; &\frac{1}{\tau(\rho-1)}\left[ \big[(1+\tau)^{q_0}-1\big] \sum_{n=1}^{N-1} (1+\tau)^{q_0 n} \|u_{n}\|_\rho^\rho + (1+\tau)^{q_0} \|u_{0}\|_{\rho}^{\rho} - (1+\tau)^{q_0 N} \|u_N\|_\rho^\rho \right]\\
	& + C(\rho,s,B) (1+\tau)^{q_0 N} \tau^{-1}.
\end{align*}
Consequently, recalling~\eqref{eqn;unrho},
\begin{align*}
	(1+\tau)^{q_0 N} \|u_N\|_\rho^\rho +\; & \frac{3m\rho(\rho-1) M^{m-1}}{2(m+\rho-1)^2} \tau \sum_{n=1}^N (1+\tau)^{q_0 n} \big\|\nabla u_n^{(m+\rho-1)/2}\big\|_2^2 \\
	\le\;& (1+\tau)^{q_0} \|u_0\|_\rho^\rho + \big[(1+\tau)^{q_0}-1\big]n\sum_{n=1}^{N-1} (1+\tau)^{q_0 n} \|u_{n}\|_\rho^\rho \\
	& + C(\rho,s,B) (1+\tau)^{q_0 N} \\
	\le\; & (1+\tau)^{q_0} \|u_0\|_\rho^\rho +\frac{m\rho(\rho-1) M^{m-1}}{(m+\rho-1)^2} \tau \sum_{n=1}^N (1+\tau)^{q_0 n} \big\|\nabla u_n^{(m+\rho-1)/2}\big\|_2^2 \\
	& + C(\rho,s,B) (1+\tau)^{q_0 N},
\end{align*}
from which~\eqref{ph24} follows. We have thus completed the proof of Lemma~\ref{lemds04}.
\end{proof}

\subsection{$L^\infty$-estimate on $(u_n)_{n\ge 1}$}\label{sec.2.5}

Having derived $L^\rho$-bounds on $(u_n)_{n\ge 1}$ for $\rho\in (1,\infty)$ large enough, we improve the regularity of $(w_n)_{n\ge 1}$ and $(v_n)_{n\ge 1}$ with the help of discrete analogues of classical smoothing effects of the linear heat equation, which we establish in Lemma~\ref{lemA3}. Specifically, we first show that the $L^{\rho}$-estimate on $(u_n)_{n\ge 1}$ for $\rho>d$ provided by Lemma~\ref{lemds04} implies the boundedness of $(w_n)_{n\ge 1}$ in $W^{1,\infty}(\mathbb{R}^d)$, which in turn entails that of $(v_n-\Delta v_n)_{n\ge 1}$ in $L^\infty(\mathbb{R}^d)$. 

\begin{lemma}\label{lemds05}
Let $d\ge 1$ and $m>1$ satisfying additionally $m>m_*$ when $d\ge 5$ and assume that there is $B>0$ such that~\eqref{ph23} holds true. Then there is $C(B)>0$ depending on $d$, $m$ and $B$ such that
\begin{equation}
	\|w_n\|_{W^{1,\infty}} + \|v_n-\Delta v_n\|_{\infty} \le C(B), \qquad n\ge 1. \label{ph42}
\end{equation}
\end{lemma}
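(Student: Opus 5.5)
The plan is to combine Lemma~\ref{lemds04}, which gives an $L^\rho$-bound on $(u_n)_{n\ge 1}$ for one $\rho>d$, with the discrete smoothing estimates of Lemma~\ref{lemA3} in the Appendix, first for \eqref{ph04b} and then for \eqref{ph04a}.

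\textbf{Step~1: an $L^\rho$-bound with $\rho>d$.} Fix $\rho>\max\{m,d,(d-2)/2\}$. Lemma~\ref{lemds04} applies under the standing assumption~\eqref{ph23}, and~\eqref{ph24} yields
\begin{equation*}
	\|u_n\|_\rho\le B_\rho^{1/\rho}, \qquad n\ge 0,
\end{equation*}
where the case $n=0$ follows from~\eqref{ph01}. Together with $\|u_n\|_1=1$ and interpolation, this bounds $(u_n)_{n\ge 0}$ in $L^p(\mathbb{R}^d)$ uniformly for every $p\in[1,\rho]$. This is the only place where the constraint $m>m_*$ (when $d\ge5$) enters.

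\textbf{Step~2: $W^{1,\infty}$-bound on $(w_n)_{n\ge1}$.} I would iterate~\eqref{ph04b} as
\begin{equation*}
	w_n=(1+\tau)^{-n}\big(I-\tfrac{\tau}{1+\tau}\Delta\big)^{-n}w_0+\frac{\tau}{1+\tau}\sum_{k=1}^{n}(1+\tau)^{-(n-k)}\big(I-\tfrac{\tau}{1+\tau}\Delta\big)^{-(n-k+1)}u_{k-1}.
\end{equation*}
The powers of the resolvent are represented as subordinated heat semigroups, via the Gamma-distributed average of $e^{t\Delta}$. This gives the discrete analogues of the $L^p$--$L^\infty$ smoothing estimates for $e^{t\Delta}$ and $\nabla e^{t\Delta}$, and I expect this to be the content of Lemma~\ref{lemA3}: a decay factor of order $(j\tau)^{-d/(2\rho)}$ for $w$ and $(j\tau)^{-1/2-d/(2\rho)}$ for $\nabla w$ after $j$ steps.

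The exponent $1/2+d/(2\rho)$ is less than $1$ precisely because $\rho>d$. The factor $(1+\tau)^{-j}$ then makes the sum a Riemann sum of $\int_0^\infty e^{-s}s^{-1/2-d/(2\rho)}\,ds<\infty$, uniformly in $\tau\in(0,1)$. The contribution of $w_0$ is controlled by $\|w_0\|_{W^{1,\infty}}$, using~\eqref{ph01} and the contractivity of the resolvent in $L^\infty$. Altogether this gives $\|w_n\|_{W^{1,\infty}}\le C(B)$.

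\textbf{Step~3: $L^\infty$-bound on $(v_n-\Delta v_n)_{n\ge 1}$.} Set $z_n:=v_n-\Delta v_n$. Since $I-\Delta$ commutes with the scheme, applying it to~\eqref{ph04a} gives
\begin{equation*}
	\frac{z_n-z_{n-1}}{\tau}-\Delta z_n+z_n=w_n-\Delta w_n.
\end{equation*}
Writing $-\Delta w_n=-\mathrm{div}\,\nabla w_n$, the source is $w_n$ plus the divergence of a field bounded in $L^\infty$ by Step~2. The same subordinated representation applies: the $w_n$ term is handled by $L^\infty$-contractivity together with $\sum_j\tau(1+\tau)^{-j}\le C$. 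For the divergence term, I move one derivative onto the kernel, which costs $(j\tau)^{-1/2}$. That singularity is integrable, so $\|z_n\|_\infty\le \|v_0-\Delta v_0\|_\infty+C\sup_k\|w_k\|_{W^{1,\infty}}$, with the initial term bounded thanks to $v_0\in W^{2,\infty}(\mathbb{R}^d)$ by~\eqref{ph01}.

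\textbf{Main obstacle.} The hard step is making the discrete smoothing estimates uniform in $\tau$. In particular, the first few terms $j\sim1$ must be handled so that no negative power of $\tau$ survives. The subordination formula should take care of this, since the resolvent power $(I-\tfrac{\tau}{1+\tau}\Delta)^{-j}$ is an average of $e^{t\Delta}$ over $t$ distributed around $j\tau$ with an integrable weight near $t=0$. If instead Lemma~\ref{lemA3} gives the estimates directly, Steps~2 and~3 reduce to bookkeeping.
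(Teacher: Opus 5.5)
Your proposal is correct and follows the paper's route. The paper fixes $p>d$, takes the $L^p$-bound on $(u_n)$ from Lemma~\ref{lemds04}, and applies Lemma~\ref{lemA3} twice: first to \eqref{ph04b} with $f_n=u_{n-1}$, then to \eqref{ph04a} with $f_n=w_n$. Lemma~\ref{lemA3} is proved as you sketch, via the Gamma-weighted heat-kernel representation of resolvent powers, Gautschi's inequality to sum the kernel bounds uniformly in $\tau$, and, for $v_n-\Delta v_n$, moving one derivative from $\Delta w_n$ onto the kernel. Your choice $\rho>\max\{m,d,(d-2)/2\}$ also makes explicit the hypotheses of Lemma~\ref{lemds04}, which the paper leaves implicit when it simply takes $p\in(d,\infty)$.
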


\begin{proof}
Fix $p\in (d,\infty)$. By Lemma~\ref{lemds04}, $\|u_{n-1}\|_p \le \max\{\|u_0\|_p,B_p\}$ for $n\ge 1$. Owing to this property and~\eqref{ph04b}, we are in a position to apply the first statement in Lemma~\ref{lemA3} (with $z_n=w_n$ and $f_n = u_{n-1}$) to obtain
\begin{equation}
	\|w_n\|_{W^{1,\infty}} \le C_1(p) \left( \|w_0\|_{W^{1,\infty}} +  \max\{\|u_0\|_p,B_p\} \right), \qquad n\ge 1. \label{ph60}
\end{equation}
In view of~\eqref{ph04a} and~\eqref{ph60}, we may apply the second statement in Lemma~\ref{lemA3} (with $z_n=v_n$ and $f_n = w_n$) and conclude that
\begin{equation*}
	\|v_n-\Delta v_n\|_\infty \le C_2 \left[ \|z_0-\Delta z_0\|_\infty + C_1(p) \left( \|w_0\|_{W^{1,\infty}} +  \max\{\|u_0\|_p,B_p\} \right) \right], \qquad n\ge 1,
\end{equation*}
and the proof is complete.
\end{proof}

We are now in a position to derive a uniform bound on $(u_n)_{n\ge 1}$ by a Moser iteration technique \cite{Ali1979}.
	
\begin{lemma}\label{lemds06}
Let $d\ge 1$ and $m>1$ satisfying additionally $m>m_*$ when $d\ge 5$ and assume that there is $B>0$ such that~\eqref{ph23} holds true. Then, there is $C(B)>0$ depending on $d$, $m$ and $B$ such that
\begin{equation*}
	\|u_n\|_{\infty} \le C(B), \qquad n\ge1. 
\end{equation*}
\end{lemma}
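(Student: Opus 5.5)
We run a Moser iteration built on the discrete energy inequality~\eqref{ph29} from Step~1 of the proof of Lemma~\ref{lemds04}. The one new input is Lemma~\ref{lemds05}, which makes the drift coefficient bounded: $\|v_n-\Delta v_n\|_\infty\le C(B)$. For $\rho\ge\rho_0$, with $\rho_0>\max\{m,(d-2)/2,d\}$ fixed, \eqref{ph29} then reads
\begin{equation*}
	\frac{4m\rho M^{m-1}}{(m+\rho-1)^2} \big\|\nabla u_n^{(m+\rho-1)/2}\big\|_2^2 + \frac{\|u_n\|_\rho^\rho-\|u_{n-1}\|_\rho^\rho}{\tau(\rho-1)} \le M C(B) \|u_n\|_\rho^\rho .
\end{equation*}
Lemma~\ref{lemds04} provides the starting bound $\sup_n\|u_n\|_{\rho_0}\le C$. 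We then iterate $\rho_k=2^k\rho_0$ (or $\rho_{k+1}=2\rho_k+1-m$) and track constants that grow at most geometrically in $k$.

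\textbf{Step 1: the one-step recursion.} Set $\rho=\rho_{k+1}$ and $\phi_n=u_n^{(m+\rho-1)/2}$. The right-hand side $\|u_n\|_\rho^\rho=\|\phi_n\|_{2\rho/(m+\rho-1)}^{2\rho/(m+\rho-1)}$ is interpolated between $\|\nabla\phi_n\|_2$ and the lower norm $\|u_n\|_{\rho_k}$ (with $\rho_k\approx\rho/2$). This uses the Gagliardo--Nirenberg variant \cite[Lemma~2.4]{Sug2006} with $q_1=\rho_k$, $q_2=r=\rho$, exactly as in~\eqref{ph33b}. By Young's inequality,
\begin{equation*}
	(M C(B)+2)\|u_n\|_\rho^\rho \le \frac{2m\rho M^{m-1}}{(m+\rho-1)^2}\|\nabla\phi_n\|_2^2 + C\, b^{k}\,\big(\sup_j\|u_j\|_{\rho_k}\big)^{\gamma_k\rho},
\end{equation*}
where $b>1$ is a fixed base, $\gamma_k\to 1$, and the product of the $\gamma_k$ converges. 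The extra coefficient $+2$ keeps a damping term $2\|u_n\|_\rho^\rho$ available on the left.

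\textbf{Step 2: summing in time.} This yields
\begin{equation*}
	\frac{\|u_n\|_\rho^\rho-\|u_{n-1}\|_\rho^\rho}{\tau}+(\rho-1)\|u_n\|_\rho^\rho\le C\rho\, b^k S_k^{\gamma_k\rho},
\end{equation*}
where $S_k=\max\{\|u_0\|_\infty,\sup_j\|u_j\|_{\rho_k}\}$. A discrete Gronwall argument (the same summation with weights $(1+\tau(\rho-1))^{n}$ as in~\eqref{ph32}) then gives
\begin{equation*}
	\|u_N\|_\rho^\rho\le \|u_0\|_\rho^\rho+ C b^k S_k^{\gamma_k\rho}.
\end{equation*}
The bound $\|u_0\|_\rho\le\|u_0\|_1^{1/\rho}\|u_0\|_\infty^{1-1/\rho}$ uses~\eqref{ph01}. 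Taking $\rho$-th roots gives $S_{k+1}\le (C b^k)^{1/\rho_{k+1}} S_k^{\gamma_k}$. Since $\sum_k k/\rho_k<\infty$, the sequence $S_k$ stays bounded, and letting $k\to\infty$ gives $\|u_n\|_\infty\le C(B)$ uniformly in $n$ and $\tau$.

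\textbf{Main obstacle.} The hard part is the constant bookkeeping in Step~1. One must check that the Sugiyama constants and the Young exponents depend on $\rho$ at most polynomially, with Gagliardo--Nirenberg exponents uniform as $\rho_k\to\infty$. One must also check that the damping coefficient $(\rho-1)$ produced by the time-discrete Gronwall step does not degenerate in $\tau$. The latter holds because $(1+\tau(\rho-1))^{-1}$ summed against $\tau$ is bounded by $1/(\rho-1)$. If the exact Sugiyama form is awkward, the fallback is to use the standard inequality $\|\phi\|_2^2\le C\|\nabla\phi\|_2^{2a}\|\phi\|_1^{2(1-a)}$ with $\|\phi_n\|_1=\|u_n\|_{(m+\rho-1)/2}^{(m+\rho-1)/2}$, following Alikakos \cite{Ali1979} directly.
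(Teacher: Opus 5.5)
Your proposal is correct and follows essentially the paper's route. It runs a Moser--Alikakos iteration on \eqref{ph29}, with the drift bounded via Lemma~\ref{lemds05}, interpolation of $\|u_n\|_\rho$ between $\|\nabla u_n^{(m+\rho-1)/2}\|_2$ and a lower-order Lebesgue norm, a damped discrete Gronwall step, and a geometrically growing sequence of exponents. The paper avoids your constant-tracking concern exactly as in your fallback: it uses H\"older plus Sobolev for $d\ge 3$, and a fixed Gagliardo--Nirenberg inequality with $\rho_{j+1}=\frac{d+2}{d}\rho_j+1-m$ for $d\in\{1,2\}$. It then closes the recursion with \cite[Lemma~A.1]{Lau1994}. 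One small fix: define $S_k$ with an extra $\max\{1,\cdot\}$, since $\|u_0\|_\infty$ and $\sup_j\|u_j\|_{\rho_k}$ may lie below $1$.
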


\begin{proof}
Let $\rho> \max\{m,d\}$. The starting point of the proof is the inequality~\eqref{ph29} and the estimate~\eqref{ph42}, from which we deduce that, for $n\ge 1$,
\begin{align*}
	\frac{4m\rho(\rho-1)M^{m-1}}{(m+\rho-1)^2} \big\|\nabla u_n^{(m+\rho-1)/2}\big\|_2^2 & \le \frac{\|u_{n-1}\|_{\rho}^{\rho} - \|u_n\|_\rho^\rho}{\tau} + M(\rho-1) \|v_n - \Delta v_n\|_\infty \|u_n\|_\rho^\rho \\
	& \le \frac{\|u_{n-1}\|_{\rho}^{\rho} - \|u_n\|_\rho^\rho}{\tau} - \|u_n\|_\rho^\rho + \rho C(B) \|u_n\|_\rho^\rho.
\end{align*}
Owing to the elementary inequalities 
\begin{equation*}
	\frac{\rho}{m+\rho-1} \ge \frac{m}{2m-1} \ge \frac{1}{2} \;\;\text{ and }\;\; \frac{\rho-1}{m+\rho-1} \ge \frac{m-1}{2m-1} \ge \frac{1}{2m}
\end{equation*}
 for $\rho\ge m$, we realize that $4\rho(\rho-1)/(m+\rho-1)^2 \ge (m-1)/m$ for $\rho\ge m$ and we further obtain
\begin{equation}
	(m-1)M^{m-1} \big\|\nabla u_n^{(m+\rho-1)/2}\big\|_2^2 \le \frac{\|u_{n-1}\|_{\rho}^{\rho} - \|u_n\|_\rho^\rho}{\tau} - \|u_n\|_\rho^\rho + \rho C_1(B)\|u_n\|_\rho^\rho. \label{ph44}
\end{equation}

We next split the analysis according to the space dimension and begin with higher space dimensions $d\ge 3$, for which Sobolev's inequality is available.

\medskip

\noindent $\triangleright$ $d\ge 3$: By H\"older's and Sobolev's inequalities,
\begin{align*}
	\|u_n\|_\rho^\rho & = \left\| u_n^{(m+\rho-1)/2} \right\|_{2\rho/(m+\rho-1)}^{2\rho/(m+\rho-1)} \le \left\| u_n^{(m+\rho-1)/2} \right\|_{2d/(d-2)}^{2\rho\theta/(m+\rho-1)} \left\| u_n^{(m+\rho-1)/2} \right\|_{\rho/(m+\rho-1)}^{2\rho(1-\theta)/(m+\rho-1)} \\
	& \le \left( S_d^2 \left\| \nabla u_n^{(m+\rho-1)/2} \right\|_{2}^2 \right)^{\rho\theta/(m+\rho-1)} \|u_n\|_{\rho/2}^{\rho(1-\theta)} 
\end{align*}
with
\begin{equation*}
	\theta := \frac{d(m+\rho-1)}{\rho(d+2)+2d(m-1)}>\frac{1}{2}, \qquad 1-\theta = \frac{2\rho+d(m-1)}{\rho(d+2)+2d(m-1)}.
\end{equation*}
Observing that
\begin{equation*}
	1 - \frac{\rho\theta}{m+\rho-1} = \frac{2\rho +2d(m-1)}{\rho(d+2)+2d(m-1)}>0,
\end{equation*}
we infer from Young's inequality that
\begin{align*}
	\rho C_1(B) \|u_n\|_\rho^\rho & \le \frac{\rho\theta}{m+\rho-1} (m-1)M^{m-1} \left\| \nabla u_n^{(m+\rho-1)/2} \right\|_{2}^2 \\
	& \qquad + \frac{\rho(1-\theta)+m-1}{m+\rho-1} \left( \frac{S_d^2}{(m-1)M^{m-1}} \right)^{\rho\theta/[\rho(1-\theta)+m-1]} \\
	& \qquad\qquad \times \left( \rho C_{2}(B) \|u_n\|_{\rho/2}^{\rho(1-\theta)} \right)^{(m+\rho-1)/[\rho(1-\theta)+m-1]}.
\end{align*}
At this point, we notice that
\begin{align*}
	\frac{\rho\theta}{m+\rho-1} & = \frac{\rho d}{\rho(d+2)+2d(m-1)} \le \frac{d}{d+2}, \\
	\rho(1-\theta) + m - 1 & = \frac{2(m+\rho-1)[\rho+d(m-1)]}{\rho(d+2)+2d(m-1)}, \\
	\frac{\rho\theta}{\rho(1-\theta) + m - 1} & = \frac{\rho d}{2\rho+2d(m-1)} \le \frac{d}{2}, \\
	\frac{m+\rho-1}{\rho(1-\theta) + m - 1} & = \frac{\rho(d+2)+2d(m-1)}{2\rho+2d(m-1)} \le \frac{d+2}{2}, \\
	\frac{\rho(1-\theta)(m+\rho-1)}{\rho(1-\theta) + m - 1} & = \frac{\rho}{2} \frac{2\rho+d(m-1)}{\rho+d(m-1)} \le \rho, 
\end{align*}
and end up with
\begin{align*}
	\rho C_{1}(B) \|u_n\|_\rho^\rho & \le \frac{d(m-1)M^{m-1}}{d+2} \left\| \nabla u_n^{(m+\rho-1)/2} \right\|_{2}^2 \\
	& \qquad + C_{2}(B) \rho^{(d+2)/2} \left( \max\{1,\|u_n\|_{\rho/2}^{\rho/2}\} \right)^2.
\eqntag
\label{eqn;un_Lrho}
\end{align*}
Combining~\eqref{ph44} and~\eqref{eqn;un_Lrho} gives
\begin{align*}
	\frac{2(m-1)M^{m-1}}{d+2} \big\|\nabla u_n^{(m+\rho-1)/2}\big\|_2^2 & \le \frac{\|u_{n-1}\|_{\rho}^{\rho} - \|u_n\|_\rho^\rho}{\tau} - \|u_n\|_\rho^\rho \\
	& + C_{2}(B) \rho^{(d+2)/2} \left( \max\{1,\|u_n\|_{\rho/2}^{\rho/2}\} \right)^2,
\end{align*}
whence
\begin{equation*}
	\|u_n\|_\rho^\rho \le \frac{1}{1+\tau} \|u_{n-1}\|_\rho^\rho + \frac{\tau}{1+\tau} C(B) \rho^{(d+2)/2} \left( \max\{1,\|u_n\|_{\rho/2}^{\rho/2}\} \right)^2, \qquad n\ge 1.
\end{equation*}
Setting $U_\rho := \sup_{n\ge 0} \left( \max\{1,\|u_n\|_{\rho}^{\rho}\} \right)$, which is finite according to Lemma~\ref{lemds04}, we iterate the above formula to find, for $n\ge 1$,
\begin{align*}
	\|u_n\|_\rho^\rho \le\; & \frac{1}{(1+\tau)^n} \|u_0\|_\rho^\rho + \tau C(B) \rho^{(d+2)/2} \sum_{j=1}^n \frac{1}{(1+\tau)^{n+1-j}} \left( \max\{1,\|u_j\|_{\rho/2}^{\rho/2}\} \right)^2 \\
	\le\; & \frac{1}{(1+\tau)^n} \|u_0\|_{m}^{m} \|u_0\|_{\infty}^{\rho-m} + \tau C(B) \rho^{(d+2)/2} U_{\rho/2}^2 \sum_{j=1}^n \frac{1}{(1+\tau)^{j}} \\
	\le\; & \frac{\|u_0\|_{m}^{m}}{(1+\tau)^n} (1+\|u_0\|_{\infty})^{\rho} + C(B) \rho^{(d+2)/2} U_{\rho/2}^2 \left[ 1 - \frac{1}{(1+\tau)^n} \right] \\
	\le\; & \max\left\{ \|u_0\|_m^{m} (1+\|u_0\|_{\infty})^{\rho} , C(B) \rho^{(d+2)/2} U_{\rho/2}^2 \right\}.
\eqntag
\label{ph45}
\end{align*}
At this point, we define $\rho_j := 2^j (1+\max\{m,d\}) > \max\{m,d\}$ for $j\ge 0$ and set $U_{j} := U_{\rho_j}$ for $j\ge 1$. 
It follows from~\eqref{ph45} with $\rho=\rho_{j+1}$ that
\begin{equation*}
	\|u_n\|_{\rho_{j+1}}^{\rho_{j+1}} \le C(B) \max\left\{ (1+\|u_0\|_\infty)^{\rho_{j+1}} , \rho_{j+1}^{(d+2)/2} U_{j}^2 \right\}, \qquad j\ge 1.
\end{equation*}
Consequently,
\begin{align*}
	U_{j+1} & \le C(B) \max\left\{ \big( 1 + \|u_0\|_\infty \big)^{\rho_{j+1}} , \rho_{j+1}^{(d+2)/2} U_{j}^2 \right\}\\
	&  \le  C(B) \rho_{j+1}^{(d+2)/2}  \max\left\{ \big( 1 + \|u_0\|_\infty \big)^{\rho_{j+1}} , U_{j}^2\right\}.
\end{align*}
We are then in a position to use \cite[Lemma~A.1]{Lau1994} (with $a=2$, $b=(d+2)/2$ and $c=0$) and conclude that
\begin{equation*}
	U_{j}^{1/\rho_{j+1}} \le C(B), \qquad j\ge 1. 
\end{equation*}
Letting $j\to\infty$ in the above inequality completes the proof.

\medskip

\noindent $\triangleright$ $d\in\{1,2\}$: Since
\begin{equation*}
	\frac{2m}{m+\rho-1} < \frac{2\rho}{m+\rho-1}<2,
\end{equation*}
we infer from H\"older's inequality that , for $n\ge 1$,
\begin{align*}
	\|u_n\|_\rho^\rho & = \left\| u_n^{(m+\rho-1)/2} \right\|_{2\rho/(m+\rho-1)}^{2\rho/(m+\rho-1)} \\
	& \le \left\| u_n^{(m+\rho-1)/2} \right\|_{2m/(m+\rho-1)}^{[2m(m-1)]/[(\rho-1)(m+\rho-1)]} \left\| u_n^{(m+\rho-1)/2} \right\|_{2}^{2(\rho-m)/(\rho-1)} \\
	& = \|u_n\|_m^{(m-1)/(\rho-1)} \left\|u_n^{(m+\rho-1)/2} \right\|_{2}^{2(\rho-m)/(\rho-1)}. 
\end{align*}
We next use the Gagliardo-Nirenberg inequality
\begin{equation*}
	\|z\|_2 \le C_{GN} \|\nabla z\|_2^{1/2} \|z\|_{2d/(d+2)}^{1/2}, \qquad z\in H^1(\mathbb{R}^d), 
\end{equation*}
and~\eqref{ph23} to obtain
\begin{align*}
	\|u_n\|_\rho^\rho & \le B^{(m-1)/(\rho-1)} C_{GN}^{2(\rho-m)/(\rho-1)} \left\| \nabla u_n^{(m+\rho-1)/2} \right\|_2^{(\rho-m)/(\rho-1)} \\
	& \hspace{2cm} \times\left[ \|u_n\|_{d(m+\rho-1)/(d+2)}^{d(m+\rho-1)/(d+2)} \right]^{[(d+2)(\rho-m)]/[2d(\rho-1)]} \\
	& \le (1+B) (1+C_{GN}^2) \left\| \nabla u_n^{(m+\rho-1)/2} \right\|_2^{(\rho-m)/(\rho-1)} \\
	& \hspace{2cm} \times\left[ \|u_n\|_{d(m+\rho-1)/(d+2)}^{d(m+\rho-1)/(d+2)} \right]^{[(d+2)(\rho-m)]/[2d(\rho-1)]}. 
\end{align*}
As $(\rho-m)/[2(\rho-1)]<1/2<1$, it follows from the above inequality and Young's inequality that
\begin{align*}
	\rho C_{1}(B) \|u_n\|_\rho^\rho & \le \frac{\rho-m}{2(\rho-1)} (m-1) M^{m-1} \left\| \nabla u_n^{(m+\rho-1)/2} \right\|_2^2 \\
	& \quad + \frac{m+\rho-2}{2(\rho-1)} \left( \frac{1}{(m-1)M^{m-1}} \right)^{(\rho-m)/(m+\rho-2)} (\rho C_2(B))^{[2(\rho-1)]/(m+\rho-2)} \\
	& \hspace{3cm} \times \left[ \|u_n\|_{d(m+\rho-1)/(d+2)}^{d(m+\rho-1)/(d+2)} \right]^{[(d+2)(\rho-m)]/[d(m+\rho-2)]}.
\end{align*}
As 
\begin{equation*}
	\frac{\rho-m}{m+\rho-2} < \frac{\rho-1}{m+\rho-2} < 1,
\end{equation*} 
we further obtain
\begin{align*}
	\rho C_{1}(B) \|u_n\|_\rho^\rho & \le \frac{(m-1) M^{m-1}}{2} \left\| \nabla u_n^{(m+\rho-1)/2} \right\|_2^2 \\
	& \quad + C_2(B) \rho^{2} \left[ \max\left\{ 1, \|u_n\|_{d(m+\rho-1)/(d+2)}^{d(m+\rho-1)/(d+2)} \right\} \right]^{(d+2)/d}.
\end{align*}
Combining the above inequality with~\eqref{ph44} leads us to 
\begin{equation}
\begin{split}
	\frac{(m-1) M^{m-1}}{2} \left\| \nabla u_n^{(m+\rho-1)/2} \right\|_2^2 & \le \frac{\|u_{n-1}\|_{\rho}^{\rho} - \|u_n\|_\rho^\rho}{\tau} - \|u_n\|_\rho^\rho \\
	& \quad + C(B) \rho^2 \left[ \max\left\{ 1, \|u_n\|_{d(m+\rho-1)/(d+2)}^{d(m+\rho-1)/(d+2)} \right\} \right]^{(d+2)/d}.
\end{split}\label{ph54}
\end{equation}
At this point, we introduce the sequence $(\rho_j)_{j\in\mathbb{N}}$ defined by
\begin{equation*}
	\rho_{j+1} = \frac{d+2}{d} \rho_j + 1 - m, \quad j\in\mathbb{N}, \qquad \rho_0 = m+d > \max\{m,d\}.
\end{equation*}
Then,
\begin{equation*}
	\rho_j = \frac{d(m-1)}{2} + \left( \frac{d+2}{d} \right)^j \left( \rho_0 - \frac{d(m-1)}{2} \right), \qquad j\in\mathbb{N},
\end{equation*}
and set
\begin{equation*}
	U_j := \sup_{n\ge 0} \max\big\{1,\|u_n\|_{\rho_j}^{\rho_j}\big\}, \qquad j\in\mathbb{N}.
\end{equation*}
With this notation, we deduce from~\eqref{ph54} with $\rho=\rho_{j+1}$, $j\in\mathbb{N}$, that
\begin{equation*}
	\|u_n\|_{\rho_{j+1}}^{\rho_{j+1}} \le \frac{1}{1+\tau} \|u_{n-1}\|_{\rho_{j+1}}^{\rho_{j+1}} + \frac{\tau}{1+\tau} C(B) \rho_{j+1}^2 U_j^{(d+2)/d}, \qquad n\ge 1.
\end{equation*}
Iterating the above formula, we find
\begin{align*}
	\|u_n\|_{\rho_{j+1}}^{\rho_{j+1}} & \le \frac{1}{(1+\tau)^n} \|u_{0}\|_{\rho_{j+1}}^{\rho_{j+1}} + \left( 1 - \frac{1}{(1+\tau)^n} \right) C(B) \rho_{j+1}^2 U_j^{(d+2)/d} \\
	& \le \frac{\|u_0\|_m^m}{(1+\tau)^n} \|u_{0}\|_{\infty}^{\rho_{j+1}-m} + \left( 1 - \frac{1}{(1+\tau)^n} \right) C(B) \rho_{j+1}^2 U_j^{(d+2)/d} \\
	& \le C(B) \max\left\{ \big(1+\|u_0\|_\infty\big)^{\rho_{j+1}} , \rho_{j+1}^2 U_j^{(d+2)/d} \right\}.
\end{align*}
Therefore, 
\begin{equation*}
	U_{j+1} \le C(B) \rho_{j+1}^2 \max\left\{ \big(1+\|u_0\|_\infty\big)^{\rho_{j+1}} , U_j^{(d+2)/d} \right\}, \qquad j\in\mathbb{N}.
\end{equation*}
We are then again in a position to apply \cite[Lemma~A.1]{Lau1994} (with $a=(d+2)/d$, $b=2$ and $c=1-m$) to conclude that
\begin{equation*}
	U_{j}^{1/\rho_{j}} \le C(B), \qquad j\in\mathbb{N}.
\end{equation*}
Letting $j\to\infty$ completes the proof, since $\rho_j\to\infty$ as $j\to\infty$.
\end{proof}

We conclude Section~\ref{sec.2} by collecting the main discrete estimates established above. As mentioned in the introduction, these discrete estimates for the minimizing movement scheme constitute the core of the paper and provide the key ingredients for the simultaneous proof of the global existence and boundedness of weak solutions.

\begin{theorem}\label{thm.est}
Let $m>1$, $M>0$ and consider initial data $(u_0,v_0,w_0)$ satisfying~\eqref{hyp_CI} and~\eqref{ph01}. There is a positive constant $C_*>0$ depending only on $d$, $m$, $M$ and $(u_0,v_0,w_0)$ such that, for any $\tau\in (0,1)$, the solution $(u_n,v_n,w_n)_{n\ge 1}$ to the time discrete scheme~\eqref{ph02} satisfies
\begin{equation*}
	\|u_n\|_m + \|u_n\|_\infty + \|w_n\|_2 + \|w_n\|_{W^{1,\infty}} + \|v_n-\Delta v_n\|_2 + \|v_n-\Delta v_n\|_\infty \le C_*, \qquad n\ge 1, 
\end{equation*}
in the following cases:
\begin{itemize}
	\item [\textbf{(m1)}] $d\ge 5$ and $m>m^* = (2d-4)/d$ or $1\le d\le 4$;
	\item [\textbf{(m2)}] $d\ge 5$, $m=m^*$ and $M\in \big(0,M^*\big)$, the threshold mass $M^*$ being defined in~\eqref{ph19ba}; 
	\item [\textbf{(m3)}] $d\ge 5$, $m\in (m_*,m^*)$ and $\|u_0\|_m^m < z_m$, where $z_m>0$ is defined by~\eqref{ph19c}. 
\end{itemize}
\end{theorem}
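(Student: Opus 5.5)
The plan is to assemble Corollary~\ref{cords04}, Lemma~\ref{lemds05} and Lemma~\ref{lemds06}. The only real work is to produce a uniform $L^m$-bound $\|u_n\|_m\le B$ (hypothesis~\eqref{ph23}) in each case, with $B$ independent of $\tau\in(0,1)$ and $n$. In cases \textbf{(m1)} (with $d\ge 5$, $m>m^*$) and \textbf{(m2)}, estimates~\eqref{ph20a} and~\eqref{ph20b} give $\|u_n\|_m^m+\|v_n-\Delta v_n\|_2^2+\|w_n\|_2^2\le A$. For $1\le d\le 4$, estimate~\eqref{ph20e} gives the same bound. The constant $A$ depends only on $d$, $m$, $M$ and $\mathcal{L}[u_0,v_0,w_0]$, which is finite under~\eqref{hyp_CI}--\eqref{ph01}. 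In case \textbf{(m3)}, estimate~\eqref{ph20c} gives $\|u_n\|_m^m\le \theta z_m$ and~\eqref{ph20d} controls $v_n$ and $w_n$, so one can take $B=z_m^{1/m}$. Note that $m^*>m_*$ when $d\ge5$, so in cases \textbf{(m1)}--\textbf{(m2)} we also have $m>m_*$, which is the standing assumption of Lemmas~\ref{lemds04}--\ref{lemds06}.

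With~\eqref{ph23} established, Lemma~\ref{lemds05} bounds $\|w_n\|_{W^{1,\infty}}+\|v_n-\Delta v_n\|_\infty$ and Lemma~\ref{lemds06} bounds $\|u_n\|_\infty$. Both bounds depend only on $d$, $m$, $B$ and the initial data, not on $\tau$, since all earlier constants were $\tau$-uniform. Summing these with the $L^m$ and $L^2$ bounds yields $C_*$.

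The main obstacle is case \textbf{(m3)}. Corollary~\ref{cords04} requires $\|u_0\|_m^m\le\theta z_m$ \emph{and} $\mathcal{L}[u_0,v_0,w_0]\le f_m(\theta z_m)$ for some $\theta\in(0,1)$, whereas the theorem only assumes $\|u_0\|_m^m<z_m$. There are two ways to handle this.

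\begin{itemize}
	\item[(a)] Read the hypothesis of \textbf{(m3)} as including the energy condition, as in Corollary~\ref{cords04}, and simply invoke it. This is presumably the intended reading, with $z_m$ understood from~\eqref{ph19c}.
	\item[(b)] Avoid the energy condition by exploiting the constraint set $\mathcal{U}_m$ built into~\eqref{ph02c}. Every $u_n\in\mathcal{U}_m$ satisfies $\|u_n\|_m^m\le z_m$ automatically, so~\eqref{ph23} holds with $B=z_m^{1/m}$ regardless of $\theta$.
\end{itemize}

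Route (b) suffices for Lemmas~\ref{lemds04}--\ref{lemds06}, but they need more than the bound itself. Step~1 of Lemma~\ref{lemds04} requires the porous-medium flow $U(t)$ started at $u_n$ to stay in $\mathcal{U}_m$, and this holds by monotonicity of $\|U(t)\|_m$ whenever $u_n\in\mathcal{U}_m$. Lemma~\ref{lemds04}, Step~4, also uses the $W^{2,2}$-bound on $v_n$ from Corollary~\ref{cords04}. That bound needs the lower bound~\eqref{ph15c} for the Liapunov functional, hence some margin $\|u_n\|_m^m\le\theta z_m$ with $\theta<1$.

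I would therefore first try to obtain this margin from~\eqref{ph18} and the decay~\eqref{ph21}. If $\mathcal{L}[u_0,v_0,w_0]<f_m(z_m)$, then monotonicity of $f_m$ on $[0,z_m]$ gives some $\theta<1$ with $\|u_n\|_m^m\le\theta z_m$, and Corollary~\ref{cords04} applies. Should that fail, I would fall back on reading \textbf{(m3)} as including the initial-energy smallness implicit in the definition of $z_m$.
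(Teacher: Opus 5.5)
Your reduction is the paper's own proof. The paper invokes Corollary~\ref{cords04} to obtain \eqref{ph23} and then concludes with Lemmas~\ref{lemds05} and~\ref{lemds06}. Your handling of \textbf{(m1)}, \textbf{(m2)} and $1\le d\le 4$, including the check $m^*>m_*$ for $d\ge 5$, matches it. You are also right that in case \textbf{(m3)} the citation of Corollary~\ref{cords04} silently uses the condition $\mathcal{L}[u_0,v_0,w_0]\le f_m(\theta z_m)$, which is not among the theorem's hypotheses. Neither the paper's proof nor your fallback actually closes \textbf{(m3)} as stated.

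Your diagnosis of route (b) is wrong, however, and once it is corrected no fallback is needed. The bounds on $v_n$ and $w_n$ do not require the coercive estimate \eqref{ph15c}, so no margin $\theta<1$ is needed.

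\textbf{Bound on $v_n$ in $W^{1,2}$.} Since $f_m$ is increasing on $[0,z_m]$ with $f_m(0^+)=0$, we have $f_m(\|u_n\|_m^m)\ge 0$ for every $u_n\in\mathcal{U}_m$. Then \eqref{ph18} and \eqref{ph21} give
\begin{equation*}
\|v_n\|_{W^{1,2}}^2+\|w_n-v_n+\Delta v_n\|_2^2\le 2\mathcal{L}[u_0,v_0,w_0], \qquad n\ge 1.
\end{equation*}

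\textbf{Bound on $v_n-\Delta v_n$ and $w_n$ in $L^2$.} As in case \textbf{(m1)}, using \eqref{ph16} and $\|u_n\|_m^m\le z_m$,
\begin{equation*}
\|v_n-\Delta v_n\|_2^2\le 2\mathcal{L}[u_0,v_0,w_0]+2K_m z_m^{(d-4)/[2d(m-1)]}\big(\|v_n-\Delta v_n\|_2+\|v_n\|_2\big).
\end{equation*}
Young's inequality then bounds $\|v_n-\Delta v_n\|_2$, and hence $\|w_n\|_2$.

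\textbf{Where $\|u_0\|_m^m<z_m$ enters.} Its only role is to make $u_0$ an admissible competitor in \eqref{ph02c} at $n=1$. That is what Lemma~\ref{lemds01}, and hence \eqref{ph21}, needs.

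So \textbf{(m3)} holds under exactly the stated hypothesis, with $B=z_m^{1/m}$ in \eqref{ph23}. Your observations that the porous-medium flow in Step~1 of Lemma~\ref{lemds04} stays in $\mathcal{U}_m$, and that Step~4 only needs the $L^2$-bound on $v_n-\Delta v_n$, then complete the argument. A strict margin $\theta<1$ would matter only if one needed the constraint defining $\mathcal{U}_m$ to be inactive, for instance when deriving the Euler--Lagrange equation. That is not part of this theorem.
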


\begin{proof}
According to Corollary~\ref{cords04}, the property~\eqref{ph23} is satisfied with $B=A$. Theorem~\ref{thm.est} then readily follows from Corollary~\ref{cords04}, Lemma~\ref{lemds05} and Lemma~\ref{lemds06}.
\end{proof}

\section{Convergence}\label{sec.3}

Having established several uniform bounds on the sequences $\big(u_n^\tau,v_n^\tau,w_n^\tau\big)_{n\ge 1}$ in the previous section, we are in a position to prove the main result of this paper, namely the existence of a global bounded solution to~\eqref{eqn;DCI} as stated in Theorem~\ref{thm.gbe}. To this end, we shall prove the convergence of the piecewise constant interpolation $(u^\tau,v^\tau,w^\tau)$, which is defined by
\begin{equation*}
	\big(u^\tau,v^\tau,w^\tau\big)(t) := \big(u_n^\tau,v_n^\tau,w_n^\tau\big), \qquad t\in ((n-1)\tau,n\tau], \ n\ge 1, 
\end{equation*}
for $\tau\in (0,1)$. We point out here that, in view of the already obtained estimates which improve those derived in \cite[Section~5]{Mim2024b}, the convergence proof follows very closely that performed in \cite[Sections~6-7]{Mim2024b} and we just state below the needed results. We begin with the main estimates, see \cite[Proposition~5.2]{Mim2024b}, with the only difference that the $L^{p_*}\big((0,T),L^{d/(d-1)}(\mathbb{R}^d)\big)$ on $\nabla\big(u^\tau\big)^m$ therein is replaced by an estimate in $L^2\big((0,T)\times\mathbb{R}^d\big)$.

\begin{proposition}\label{prop.m1}
There exists $\kappa_0>0$ depending only on $d$, $m$, $M$, $(u_0,v_0,w_0)$ such that, for $\tau\in (0,1)$, $N\ge 1$ and $T\in [N\tau,(N+1)\tau)$,
\begin{subequations}\label{em01}
\begin{align}
	& \big\|u^\tau(t)\big\|_m^m + \big\|v^\tau(t)\big\|_{W^{2,2}}^2 + \big\|w^\tau(t)\big\|_{W^{1,2}}^2 \le \kappa_0, \label{em01a}\\
	& \int_{\mathbb{R}^d} |x|^2 u^\tau(t,x)\ dx \le \kappa_0 (1+T), \qquad t\in [0,T], \label{em01b}\\
	& \int_0^T \left( \big\|u^\tau(t)\big\|_2^2 + \big\|\nabla \big(u^\tau\big)^m(t)\big\|_2^2 + \big\|v^\tau(t)\big\|_{W^{3,2}}^2 + \big\|w^\tau(t)\big\|_{W^{2,2}}^2\right)\ dt \le \kappa_0 (1+T), \label{em01c}\\
	& \sum_{n=1}^N \frac{\mathcal{W}_2^2\big(u_n^\tau,u_{n-1}^\tau\big)}{\tau} + \sum_{n=1}^N \frac{\big\|v_n^\tau-v_{n-1}^\tau\big\|_{W^{1,2}}^2}{\tau} + \sum_{n=1}^N \frac{\big\|w_n^\tau-w_{n-1}^\tau\big\|_{2}^2}{\tau} \le \kappa_0. \label{em01d}
\end{align}	
\end{subequations}
\end{proposition}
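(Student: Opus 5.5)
The plan is to read off each estimate in Proposition~\ref{prop.m1} from Theorem~\ref{thm.est}, the discrete Liapunov inequality of Lemma~\ref{lemds01}, and the energy identities for the linear discrete equations~\eqref{ph04}. Throughout, $C$ denotes constants independent of $\tau$ and $N$.

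\textbf{Estimate~\eqref{em01a}.} This follows directly from Theorem~\ref{thm.est}. Elliptic regularity in $L^2$ turns the bound on $\|v_n-\Delta v_n\|_2$ into a bound on $\|v_n\|_{W^{2,2}}$. Likewise, $\|w_n\|_2$ and $\|w_n\|_\infty$ are controlled, while for $\nabla w_n$ I test~\eqref{ph04b} by $w_n$. The bound $\|u_{n-1}\|_2\le \|u_{n-1}\|_1^{1/2}\|u_{n-1}\|_\infty^{1/2}\le C$ then gives, via the discrete Gronwall form $(1+\tau)\|w_n\|_2^2+2\tau\|\nabla w_n\|_2^2\le \|w_{n-1}\|_2^2+C\tau$, a uniform control of $\tau\sum\|\nabla w_n\|_2^2$ over time intervals. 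For a pointwise-in-$n$ bound on $\|\nabla w_n\|_2$, I test instead by $-\Delta w_n$, which yields $\|\nabla w_n\|_2^2\le\|\nabla w_{n-1}\|_2^2/(1+\tau)+C\tau/(1+\tau)$, hence $\|\nabla w_n\|_2\le C$.

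\textbf{Estimate~\eqref{em01d}.} I sum~\eqref{ph09} over $n\le N$ and use the lower bounds of Lemma~\ref{lemds03} (or~\eqref{ph15d}) to bound $\mathcal{L}[u_N,v_N,w_N]$ from below by a constant. This bounds $\sum_n \mathcal{D}_0$, which gives the $\mathcal{W}_2$ and $v$ terms. For the increments of $w$, I write $w_n-w_{n-1}=\tau(\Delta w_n-w_n+u_{n-1})$ from~\eqref{ph04b}, so that $\|w_n-w_{n-1}\|_2^2/\tau=\tau\|\Delta w_n-w_n+u_{n-1}\|_2^2$. Summing then requires $\tau\sum\|\Delta w_n\|_2^2$, obtained from the $-\Delta w_n$ test above. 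That test produces $\tau\sum\|\Delta w_n\|_2^2\le C(1+T)$, though, not a constant. To get a $T$-independent bound I instead rely on $\mathcal{D}_1$: together with the $v$-increment control it bounds $\sum\|w_n-w_{n-1}\|_2^2$ only without the factor $1/\tau$. My fallback is therefore to test~\eqref{ph04b} by $w_n-w_{n-1}$ directly. This gives
\[
\frac{\|w_n-w_{n-1}\|_2^2}{\tau}+\frac{\|w_n\|_{W^{1,2}}^2-\|w_{n-1}\|_{W^{1,2}}^2}{2}\le\int u_{n-1}(w_n-w_{n-1})\,dx ,
\]
and the right-hand side must be summed by parts in $n$. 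This produces $\int u_N w_N-\int u_0w_0$ plus $\sum\int (u_{n-1}-u_n)w_n$, and the latter is controlled through $\mathcal{W}_2(u_n,u_{n-1})\,\|\nabla w_n\|_{2}\,\|u\|_\infty^{1/2}$-type bounds and Cauchy--Schwarz against~\eqref{em01d} for $u$.

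\textbf{Estimate~\eqref{em01b}.} This follows from the standard bound $\int|x|^2u_N\le 2\int|x|^2u_0+2N\tau\sum_n\mathcal{W}_2^2(u_n,u_{n-1})/\tau$.

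\textbf{Estimate~\eqref{em01c}.} The term $\|u^\tau\|_2^2$ is bounded pointwise by $\|u_n\|_\infty$. For $\nabla (u_n)^m$, I apply~\eqref{ph29} with $\rho=m$, giving $\nabla u_n^{(2m-1)/2}\in L^2$ summed. Then $\nabla u_n^m=\frac{2m}{2m-1}u_n^{1/2}\nabla u_n^{(2m-1)/2}$, so the $L^\infty$ bound of Theorem~\ref{thm.est} yields $\tau\sum_{n\le N}\|\nabla u_n^m\|_2^2\le C\sum(\|u_{n-1}\|_m^m-\|u_n\|_m^m)+C\tau N\le C(1+T)$, using $\|v_n-\Delta v_n\|_\infty\le C$ for the drift term. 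Since~\eqref{ph29} requires $\rho>m$ as written, I take $\rho$ slightly above $m$ instead, or repeat Step~1 of Lemma~\ref{lemds04} with $\rho=m$. For $w_n\in W^{2,2}$, I use the $-\Delta w_n$ test already mentioned. For $v_n\in W^{3,2}$, I test~\eqref{ph04a} by $\Delta^2 v_n$ and use $\|\nabla w_n\|_2\le C$ to get $\tau\sum\|\nabla\Delta v_n\|_2^2\le C(1+T)$.

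The main obstacle is the $T$-independent $w$-increment part of~\eqref{em01d}, where the summation by parts against $u$ must be handled carefully. Everything else is routine given Theorem~\ref{thm.est}.
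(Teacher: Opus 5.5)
Your arguments for \eqref{em01a}, \eqref{em01b}, \eqref{em01c}, and for the $\mathcal{W}_2$- and $v$-parts of \eqref{em01d}, are correct and essentially those of the paper. The paper uses Lemma~\ref{lemA2} for the regularity of $w_n$ and $\nabla v_n$. It also uses \eqref{ph29} with $\rho=m+1$, so that $\nabla u_n^{(m+\rho-1)/2}=\nabla u_n^m$ directly and the borderline case $\rho=m$ never arises.

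The genuine gap is the $w$-increment term in \eqref{em01d}, which must be bounded by $\kappa_0$ independently of $T$; none of your three routes achieves this. In your fallback, after summation by parts you must control $\sum_{n=1}^N\mathcal{W}_2(u_n,u_{n-1})\|\nabla w_n\|_2$. Cauchy--Schwarz only gives
\begin{equation*}
\Bigl(\sum_{n=1}^N\frac{\mathcal{W}_2^2(u_n,u_{n-1})}{\tau}\Bigr)^{1/2}\Bigl(\tau\sum_{n=1}^N\|\nabla w_n\|_2^2\Bigr)^{1/2}\le C\sqrt{N\tau}\le C\sqrt{T}.
\end{equation*}
The reason is that $\|\nabla w_n\|_2$ is only known to be bounded and need not decay, e.g.\ if $u$ approaches a nontrivial stationary profile, since the mass of $u$ is conserved. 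So this route, like your first one, only yields $\sum_n\|w_n-w_{n-1}\|_2^2/\tau\le C(1+T)$.

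The missing idea is to pair the Wasserstein increments with a quantity that is itself dissipated, which you obtain by differencing the $w$-equation in time. Set $\delta_n:=w_n-w_{n-1}$. Then \eqref{ph04b} gives $(\delta_n-\delta_{n-1})/\tau-\Delta\delta_n+\delta_n=u_{n-1}-u_{n-2}$ for $n\ge 2$.

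Test this with $\delta_n$ and use the displacement-interpolation bound you already invoke:
\begin{equation*}
\bigl|\textstyle\int(u_{n-1}-u_{n-2})\delta_n\,dx\bigr|\le K^{1/2}\,\mathcal{W}_2(u_{n-1},u_{n-2})\,\|\nabla\delta_n\|_2, \qquad K:=\sup_{k\ge 0}\|u_k\|_\infty.
\end{equation*}
Now the gradient factor is $\|\nabla\delta_n\|_2$, which is absorbed by the dissipation term $\|\nabla\delta_n\|_2^2$ on the left. This gives $\|\delta_n\|_2^2-\|\delta_{n-1}\|_2^2+2\tau\|\delta_n\|_2^2\le K\tau\,\mathcal{W}_2^2(u_{n-1},u_{n-2})/2$. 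Summing over $2\le n\le N$ and dividing by $2\tau^2$ yields
\begin{equation*}
\sum_{n=2}^N\frac{\|\delta_n\|_2^2}{\tau}\le\frac{\|\delta_1\|_2^2}{2\tau^2}+\frac{K}{4}\sum_{n=1}^{N-1}\frac{\mathcal{W}_2^2(u_n,u_{n-1})}{\tau}.
\end{equation*}
The last sum is $T$-independent by the bound you derived from \eqref{ph09}. For the first term, note that $(I+\tau(I-\Delta))\delta_1=\tau(u_0-w_0+\Delta w_0)$. The $L^2$-contractivity of $(I+\tau(I-\Delta))^{-1}$ then gives $\|\delta_1\|_2\le\tau\|u_0-w_0+\Delta w_0\|_2$, a quantity depending only on the initial data; this also controls the $n=1$ term.
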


\begin{proof}
Let $\tau\in (0,1)$. By Theorem~\ref{thm.est}, there is $C_*>0$ depending only on $d$, $m$, $M$ and $(u_0,v_0,w_0)$ such that
\begin{equation}
	\big\|u_n^\tau\big\|_m^m + \big\|v_n^\tau\big\|_{W^{2,2}}^2 + \big\|w_n^\tau\big\|_2^2 + \big\|u_n^\tau\big\|_2^2 \le C_*, \qquad n\ge 1. \label{em02}
\end{equation}
Combining~\eqref{em02} and Lemma~\ref{lemA2} (with $z_n=w_n$ and $f_n=u_{n-1}$) gives
\begin{equation}
	\big\|w_n^\tau\big\|_{W^{1,2}}^2 \le \kappa_0, \qquad n\ge 1, \label{em03}
\end{equation}
and we have proved~\eqref{em01a}. The proofs of~\eqref{em01b} and~\eqref{em01d} are the same as that in \cite[Proposition~5.2]{Mim2024b}, to which we refer. 

Next, since $\rho=m+1>m$ and the proof of \textbf{Step~1} in Lemma~\ref{lemds04} does not require the additional constraint $\rho>(d-2)/2$, it follows from~\eqref{ph29} (with $\rho=m+1$) that
\begin{equation*}
	\frac{(m+1) M^{m-1}}{m} \big\|\nabla \big(u_n^\tau\big)^{m}\big\|_2^2 \le \frac{\|u_{n-1}\|_{m+1}^{m+1} - \|u_n\|_{m+1}^{m+1}}{m \tau} + M\int_{\mathbb{R}^d} \big(u_n^\tau\big)^{m+1} \big(v_n^\tau - \Delta v_n^\tau\big)\ dx. 
\end{equation*}
We next use H\"older's inequality, Corollary~\ref{cords04}, Lemma~\ref{lemds05} and Lemma~\ref{lemds06} to obtain
\begin{align*}
	\frac{(m+1) M^{m-1}}{m} \big\|\nabla \big(u_n^\tau\big)^{m}\big\|_2^2 & \le \frac{\|u_{n-1}\|_{m+1}^{m+1} - \|u_n\|_{m+1}^{m+1}}{m \tau} + M\int_{\mathbb{R}^d} \big(u_n^\tau\big)^{m+1} \big(v_n^\tau - \Delta v_n^\tau\big)\ dx \\
	& \le \frac{\|u_{n-1}\|_{m+1}^{m+1} - \|u_n\|_{m+1}^{m+1}}{m \tau} + M \big\|u_n^\tau\big\|_\infty \big\|u_n^\tau\big\|_m^m \big\|v_n^\tau - \Delta v_n^\tau\big\|_\infty  \\
	& \le \frac{\|u_{n-1}\|_{m+1}^{m+1} - \|u_n\|_{m+1}^{m+1}}{m \tau} + \kappa_0.
\end{align*}
Summing up the above inequality for $n\in\{1,\ldots,N\}$, $N\ge 1$, leads us to
\begin{equation}
	\frac{(m+1) M^{m-1}}{m} \tau \sum_{n=1}^N \big\|\nabla \big(u_n^\tau\big)^{m}\big\|_2^2 \le \frac{\|u_0\|_{m+1}^{m+1}}{m} + N \tau \kappa_0 \le \kappa_0 (1+N\tau). \label{em04}
\end{equation}
Finally, we infer from~\eqref{em02}, \eqref{em03} and Lemma~\ref{lemA2} (first, with $z_n=\nabla v_n^\tau$ and $f_n=\nabla w_n^\tau$ and then with $z_n=w_n^\tau$ and $f_n=u_{n-1}^\tau$) that
\begin{equation*}
	\tau \sum_{n=1}^N \big\| \big( \partial_i\partial_j v_n^\tau \big)\big\|_{W^{1,2}}^2 + \tau \sum_{n=1}^N \big\| \nabla w_n^\tau\big\|_{W^{1,2}}^2 \le \kappa_0 (1+N\tau).
\end{equation*}
The bound~\eqref{em01c} is then a direct consequence of~\eqref{em02}, \eqref{em04} and the above estimate.
\end{proof}

The analysis performed in Section~\ref{sec.2} actually goes beyond the outcome of Proposition~\ref{prop.m1} and we report the following additional estimates, which are established in Lemma~\ref{lemds05} and Lemma~\ref{lemds06} and summarized in Theorem~\ref{thm.est}.

\begin{proposition}\label{prop.eh1}
There is $\kappa_\infty>0$ depending on $d$, $m$, $M$ and $(u_0,v_0,w_0)$ such that
\begin{equation}
	\big\|u^\tau(t)\big\|_\infty + \big\|v^\tau(t)\big\|_{W^{2,\infty}} + \big\|w^\tau(t)\big\|_{W^{1,\infty}} \le \kappa_\infty. \label{ehl01}
\end{equation}
\end{proposition}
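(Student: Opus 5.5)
The bound~\eqref{ehl01} is a direct transcription of Theorem~\ref{thm.est} to the piecewise constant interpolation. The only new ingredient is control of the full $W^{2,\infty}$-norm of $v^\tau$, and not merely of $v^\tau-\Delta v^\tau$. Since $(u^\tau,v^\tau,w^\tau)(t)=(u_n^\tau,v_n^\tau,w_n^\tau)$ for $t\in((n-1)\tau,n\tau]$, it suffices to bound $\|u_n^\tau\|_\infty$, $\|w_n^\tau\|_{W^{1,\infty}}$ and $\|v_n^\tau\|_{W^{2,\infty}}$ uniformly in $n\ge 1$ and $\tau\in(0,1)$. At $t=0$ the initial data are covered by~\eqref{ph01}.

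The first two quantities are bounded by $C_*$ in Theorem~\ref{thm.est}. For $v_n^\tau$, I would write $v_n^\tau=(I-\Delta)^{-1}g_n$ with $g_n:=v_n^\tau-\Delta v_n^\tau$, where $\|g_n\|_\infty\le C_*$ and $\|g_n\|_2\le C_*$. The Bessel kernel $G_1$ of $(I-\Delta)^{-1}$ satisfies $G_1,\nabla G_1\in L^1(\mathbb{R}^d)$, so $\|v_n^\tau\|_{W^{1,\infty}}\le C\|g_n\|_\infty$.

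The main obstacle is the second derivatives. Calder\'on--Zygmund theory fails at the endpoint $L^\infty$, so $\|g_n\|_\infty$ alone does not control $\|D^2 v_n^\tau\|_\infty$.

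To get around this, I would use~\eqref{ph04a} to rewrite
\[
g_n = w_n^\tau - \frac{v_n^\tau-v_{n-1}^\tau}{\tau}.
\]
This still only gives $L^\infty$ control on $g_n$, so the gain has to come from the extra smoothness of $w_n^\tau$, which lies in $W^{1,\infty}$.

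The concrete route is to prove a $W^{1,\infty}$-bound on $g_n$. The sequence $z_n:=\nabla v_n^\tau$ solves the discrete heat equation with source $f_n=\nabla w_n^\tau$, which is bounded in $L^\infty$ uniformly in $n$. Then $\nabla g_n=\nabla(v_n^\tau-\Delta v_n^\tau)$, and $\|\nabla g_n\|_\infty$ should be handled by the second statement of Lemma~\ref{lemA3}, applied with $z_n=\partial_i v_n^\tau$ and $f_n=\partial_i w_n^\tau$. That statement turns a uniform $L^\infty$ bound on $f_n$ into a uniform $L^\infty$ bound on $z_n-\Delta z_n$, provided $z_0-\Delta z_0=\partial_i(v_0-\Delta v_0)\in L^\infty$. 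If~\eqref{ph01} does not supply this, I would instead exploit the decaying weight $(1+\tau)^{-n}$ in the discrete Duhamel formula, so that the initial contribution enters only through lower norms.

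Once $g_n$ is bounded in $W^{1,\infty}$, hence in $C^{0,\alpha}$, Schauder estimates for $(I-\Delta)^{-1}$ on $\mathbb{R}^d$ give $\|v_n^\tau\|_{C^{2,\alpha}}\le C\|g_n\|_{C^{0,\alpha}}$, and in particular a uniform bound on $\|D^2v_n^\tau\|_\infty$.

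Collecting these bounds with Theorem~\ref{thm.est} yields~\eqref{ehl01} with $\kappa_\infty$ depending only on $d$, $m$, $M$ and $(u_0,v_0,w_0)$.
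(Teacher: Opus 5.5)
You correctly bound $u^\tau$, $w^\tau$ and $v^\tau$ in $W^{1,\infty}$. You are also right that Theorem~\ref{thm.est}, which is all the paper cites for \eqref{ehl01}, controls $v_n^\tau-\Delta v_n^\tau$ but not $D^2v_n^\tau$ in $L^\infty$. Your repair of that step, however, has a genuine gap.

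The second statement of Lemma~\ref{lemA3} does not turn an $L^\infty$ bound on $f_n$ into an $L^\infty$ bound on $z_n-\Delta z_n$. It assumes $\sup_n\|f_n\|_{W^{1,\infty}}<\infty$. Its proof writes $\Delta f_k=\mathrm{div}\,\nabla f_k$ precisely so that only one derivative falls on the kernel $\mathcal{B}_\tau^k$. The $L^\infty\to L^\infty$ version you invoke is an endpoint maximal regularity estimate. It is not available uniformly in $\tau$, for the same reason Calder\'on--Zygmund fails at $L^\infty$.

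With $f_n=\partial_iw_n^\tau$, the lemma as stated would need $w_n^\tau$ bounded in $W^{2,\infty}$. Nothing provides this, since $w_n^\tau$ is driven by $u_{n-1}^\tau$, which is only known to be bounded. The obstruction has simply moved one level down.

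The initial term is a second problem. $\partial_i(v_0-\Delta v_0)\in L^\infty$ is not implied by \eqref{ph01}, and the weight $(1+\tau)^{-n}$ does not help for small $n$. If $\Delta v_0$ has a jump, which is allowed for $v_0\in W^{2,\infty}$, then $\|\nabla(v_1^\tau-\Delta v_1^\tau)\|_\infty$ is of order $\tau^{-1/2}$. So $g_n=v_n^\tau-\Delta v_n^\tau$, the input of your Schauder step, cannot be bounded uniformly in $W^{1,\infty}$, or even in $C^{0,\alpha}$, for all $n\ge 1$ and $\tau\in(0,1)$.

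What works is to aim one derivative lower and keep the initial term separate. By \eqref{zLs23} with $z_n=v_n^\tau$ and $f_n=w_n^\tau$,
\begin{equation*}
	\partial_i\partial_jv_n^\tau=\mathcal{B}_\tau^n*\partial_i\partial_jv_0+\tau\sum_{k=1}^n\partial_i\mathcal{B}_\tau^{n+1-k}*\partial_jw_k^\tau .
\end{equation*}
The kernel bounds used for the last term in the proof of \eqref{zLs17b} then give
\begin{equation*}
	\|\partial_i\partial_jv_n^\tau\|_\infty\le C\Big(\|v_0\|_{W^{2,\infty}}+\sup_{k\ge 1}\|\nabla w_k^\tau\|_\infty\Big),
\end{equation*}
which is uniform in $n$ and $\tau$ by \eqref{ph01} and Lemma~\ref{lemds05}.

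Equivalently, apply the first statement of Lemma~\ref{lemA3} to your own pair $(z_n,f_n)=(\partial_iv_n^\tau,\partial_iw_n^\tau)$, with some $p\in(\max\{d,2\},\infty)$. Here $\sup_n\|\partial_iw_n^\tau\|_p$ is finite by interpolating the $L^2$ bound of Proposition~\ref{prop.m1} with Lemma~\ref{lemds05}. This route needs neither third derivatives of $v_0$ nor Schauder theory.
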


We then argue as in \cite[Section~6]{Mim2024b} to deduce from Proposition~\ref{prop.m1} and a refined version of the Ascoli-Arzel\`a theorem \cite[Theorem~3.3.1]{AGS2005} the following convergence result, see also the proof of \cite[Eq.~(30)]{JKO1998}.

\begin{proposition}\label{prop.m2}
There is a sequence $(\tau_i)_{i\ge 1}$ in $(0,1)$ and functions
\begin{equation*}
	u\in C\big([0,\infty),\mathcal{P}_{2}(\mathbb{R}^d)\big) , \quad v\in C\big([0,\infty),W^{1,2}(\mathbb{R}^d)\big), \quad w\in C\big([0,\infty),L^{2}(\mathbb{R}^d)\big)
\end{equation*}
such that, for any $t>0$, 
\begin{equation*}
	\lim_{i\to\infty} \tau_i = \lim_{i\to\infty} \left[  \mathcal{W}_2^2\big(u^{\tau_i}(t),u(t)\big) + \big\| v^{\tau_i}(t)-v(t)\big\|_{W^{1,2}} + \big\| w^{\tau_i}(t)-w(t)\big\|_{2} \right] = 0 
\end{equation*}
and
\begin{equation}
\begin{split}
	u^{\tau_i}(t) & \rightharpoonup u(t) \;\;\text{ in }\;\; L^1(\mathbb{R}^d)\cap L^m(\mathbb{R}^d), \\
	v^{\tau_i}(t) & \rightharpoonup v(t) \;\;\text{ in }\;\; W^{2,2}(\mathbb{R}^d), \\
	w^{\tau_i}(t) & \rightharpoonup w(t) \;\;\text{ in }\;\; W^{1,2}(\mathbb{R}^d).
\end{split}\label{cv02}
\end{equation}
Moreover, 
\begin{equation*}
	\lim_{i\to\infty} u^{\tau_i}(t,x) = u(t,x) \;\text{ for a.e. }\; (t,x)\in (0,\infty)\times\mathbb{R}^d.
\end{equation*}
\end{proposition}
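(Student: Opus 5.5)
The plan is a standard compactness argument in the spirit of \cite[Section~6]{Mim2024b} and \cite{JKO1998}, driven by the uniform bounds of Proposition~\ref{prop.m1}.

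\textbf{Step~1: equicontinuity of $u^\tau$ in $\mathcal{W}_2$.} From \eqref{em01d} and the Cauchy--Schwarz inequality we get, for $0\le s\le t$,
\begin{equation*}
	\mathcal{W}_2\big(u^\tau(t),u^\tau(s)\big) \le \sum_{n} \mathcal{W}_2\big(u_n^\tau,u_{n-1}^\tau\big) \le \sqrt{\kappa_0}\,\big(t-s+\tau\big)^{1/2},
\end{equation*}
where the sum runs over the steps between $s$ and $t$. This is a H\"older-$1/2$ modulus up to an error of order $\tau$. The sets $\{u^\tau(t)\}$ are uniformly tight by the moment bound \eqref{em01b}, and they are weakly compact in $L^1\cap L^m$ by \eqref{em01a}, since a bounded second moment together with an $L^m$ bound gives equi-integrability. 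The refined Ascoli--Arzel\`a theorem \cite[Theorem~3.3.1]{AGS2005}, with the narrow topology and the lower semicontinuous metric $\mathcal{W}_2$, then yields a subsequence $\tau_i\to0$ and a limit $u\in C([0,\infty),\mathcal{P}_2)$. Along this subsequence $u^{\tau_i}(t)$ converges narrowly to $u(t)$ for every $t$, and weakly in $L^1\cap L^m$. The statement is written with $\lim\mathcal{W}_2^2=0$, which is stronger than narrow convergence. I would first aim for narrow convergence and the weak $L^1\cap L^m$ limit. I would then upgrade to $\mathcal{W}_2$ convergence by showing uniform integrability of $|x|^2$; a higher moment estimate can be obtained by testing the flow interchange with $|x|^4$, or else I would follow \cite{Mim2024b}.

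\textbf{Step~2: $v$ and $w$.} From \eqref{em01d}, $\|w_n-w_k\|_2\le\sqrt{\kappa_0}\,(|n-k|\tau)^{1/2}$, and similarly for $v$ in $W^{1,2}$. For fixed $t$, the families $\{w^\tau(t)\}$ and $\{v^\tau(t)\}$ are bounded in $W^{1,2}$ and $W^{2,2}$ respectively by \eqref{em01a}. This gives local compactness in $L^2$ and $W^{1,2}$ via Rellich. The difficulty is the lack of compactness at infinity on $\mathbb{R}^d$. I plan to obtain tightness from the equations \eqref{ph04}: the sources $u_{n-1}$ have uniformly bounded second moments, and the Bessel-type resolvents decay exponentially, so weighted $L^2$ bounds $\int(1+|x|^2)^{\alpha}w_n^2$ should follow by testing with a weight. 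If this fails, I would use the $W^{1,2}$ versions of the Ascoli argument with weak convergence and recover strong convergence later from the energy. A diagonal extraction then gives strong convergence of $(v^{\tau_i}(t),w^{\tau_i}(t))$ for all $t$, together with the weak convergences \eqref{cv02} by uniqueness of limits.

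\textbf{Step~3: a.e.\ convergence of $u^{\tau_i}$.} This is the main obstacle. The plan is an Aubin--Lions type argument of Rossi--Savar\'e kind (\cite{JKO1998}, Eq.~(30)). The space compactness comes from \eqref{em01c}: $\int_0^T\|\nabla(u^\tau)^m\|_2^2\,dt$ is bounded, and together with the $L^\infty$ bound \eqref{ehl01} this bounds $(u^\tau)^m$ in $L^2(0,T;W^{1,2})$. The time compactness comes from the $\mathcal{W}_2$ equicontinuity of Step~1. The $\mathcal{W}_2$-modulus controls $\sup_t\|u^\tau(t+h)-u^\tau(t)\|_{(W^{1,\infty})'}$, so Kolmogorov--Riesz, applied to $(u^\tau)^m$ locally and combined with interpolation and the moment bound, gives strong $L^1_{loc}$ compactness of $(u^\tau)^m$. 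The monotonicity of $z\mapsto z^{1/m}$ transfers this to $u^\tau$. Passing to a further subsequence gives a.e.\ convergence, and the limit is identified with $u$ from Step~1.
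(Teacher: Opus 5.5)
Your architecture is the one the paper points to: $\mathcal{W}_2$-equicontinuity from \eqref{em01d}, the bounds \eqref{em01a}--\eqref{em01c}, the refined Ascoli--Arzel\`a theorem, and then a JKO/Rossi--Savar\'e compactness argument for the a.e.\ convergence. The paper itself gives no details and defers to \cite[Section~6]{Mim2024b}. You are right that the Ascoli step only yields narrow convergence of $u^{\tau_i}(t)$. The claimed $\mathcal{W}_2$ convergence therefore needs uniform integrability of $|x|^2u^{\tau}(t)$.

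The two tightness devices you propose do not work under \eqref{hyp_CI}--\eqref{ph01}. First, $u_0$ is only in $\mathcal{P}_2\cap L^\infty$, so $\int|x|^4u_0\,dx$ may be infinite and there is no fourth moment to propagate. Second, $w_0\in L^2$ and $v_0\in W^{2,2}$ carry no weight. Since $w_n$ contains the free part $\mathcal{B}_\tau^n*w_0$, the quantity $\int(1+|x|^2)^\alpha w_n^2\,dx$ can be infinite for every $n$.

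What works is to use weights adapted to the data.

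\textbf{Weight for $u$.} By the de la Vall\'ee-Poussin theorem, pick a convex $\Phi$ with $\Phi(0)=0$, $\Phi(r)/r\to\infty$, $\Phi'$ concave and $\int\Phi(|x|^2)u_0\,dx<\infty$. Then $\phi:=\Phi(|x|^2)$ is convex and satisfies $|\nabla\phi|+\Delta\phi\le C(1+\phi)$. Let $T_n$ be the optimal map pushing $u_n$ to $u_{n-1}$. Convexity of $\phi$ and the Euler--Lagrange equation $u_n(x-T_n(x))=\tau\big(-M^{m-1}\nabla u_n^m+Mu_n\nabla v_n\big)$ give
\[
\int\phi\,u_n\,dx-\int\phi\,u_{n-1}\,dx\le\int\nabla\phi\cdot(x-T_n(x))\,u_n\,dx=\tau\Big(M^{m-1}\int u_n^m\Delta\phi\,dx+M\int u_n\nabla v_n\cdot\nabla\phi\,dx\Big).
\]
Theorem~\ref{thm.est} bounds $\|u_n\|_\infty$ and $\|\nabla v_n\|_\infty$. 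A discrete Gronwall argument (after truncating $\Phi'$ at level $k$) then gives $\sup_{n\tau\le T}\int\phi\,u_n\,dx\le C(T)$. This yields uniform integrability of $|x|^2$, hence $\mathcal{W}_2$ convergence.

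\textbf{Cut-off for $w$ and $v$.} Test \eqref{ph04b} with $\chi_R^2w_n$, where $\chi_R=0$ on $B_R$ and $\chi_R=1$ off $B_{2R}$. Using \eqref{em01a}, \eqref{em01b} and $\|w_n\|_\infty\le C_*$, one gets $\sup_{n\tau\le T}\|\chi_Rw_n\|_2^2\le\|\chi_Rw_0\|_2^2+C(1+T)^2/R$, uniformly in $\tau$. The same device handles $v$ in $W^{1,2}$: test \eqref{ph04a} with $\chi_R^2v_n$ and with $-\mathrm{div}(\chi_R^2\nabla v_n)$.

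\textbf{Step~3.} Your time modulus concerns $u^\tau$, while the spatial bound concerns $(u^\tau)^m$. Rossi--Savar\'e absorbs this mismatch, or you can handle it directly. Apply Loeper's estimate $|\int(\mu-\nu)\psi\,dx|\le\max\{\|\mu\|_\infty,\|\nu\|_\infty\}^{1/2}\mathcal{W}_2(\mu,\nu)\|\nabla\psi\|_2$ with $\psi=(u^\tau)^m(t+h)-(u^\tau)^m(t)$. Combined with $(a-b)(a^m-b^m)\ge|a-b|^{m+1}$, this gives $\int_0^{T-h}\|u^\tau(t+h)-u^\tau(t)\|_{m+1}^{m+1}\,dt\le C(T)(h+\tau)^{1/2}$.
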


Since both $u^{\tau_i}(t)$ and $u(t)$ are probability measures for all $t\ge 0$ and $i\ge 1$, a straightforward consequence of~\eqref{ehl01} and~\eqref{cv02} is the following.

\begin{corollary}\label{corReg}
	For any $\rho\in [1,\infty)$, $u\in L^\infty((0,\infty),L^\rho(\mathbb{R}^d))$ and $u\in L^\infty((0,\infty)\times \mathbb{R}^d)$.
\end{corollary}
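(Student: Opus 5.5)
The plan is to pass to the limit in the uniform bounds of Proposition~\ref{prop.eh1}, using the pointwise and weak convergences of Proposition~\ref{prop.m2} together with lower semicontinuity of norms. I would first prove the $L^\infty$ bound and then deduce the $L^\rho$ bounds by interpolation with the unit mass.

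For the $L^\infty$ bound, fix $t>0$. By Proposition~\ref{prop.m2}, $u^{\tau_i}(t)\rightharpoonup u(t)$ weakly in $L^1(\mathbb{R}^d)$. Take a test function $\varphi\in L^1(\mathbb{R}^d)$ with $\varphi\ge0$. By~\eqref{ehl01},
\begin{equation*}
\int_{\mathbb{R}^d} u^{\tau_i}(t)\varphi\,dx\le \kappa_\infty\|\varphi\|_1 .
\end{equation*}
However, $\varphi\in L^1$ is not an admissible test function for weak $L^1$ convergence, which requires $\varphi\in L^\infty$. So I would take $\varphi=\mathbf{1}_E$ with $E$ bounded and measurable, which lies in $L^\infty$. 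Passing to the limit gives
\begin{equation*}
\int_E u(t)\,dx\le\kappa_\infty|E| \qquad\text{for every bounded measurable } E .
\end{equation*}
Since $u(t)\ge0$, this yields $u(t)\le\kappa_\infty$ almost everywhere. Alternatively, the almost everywhere convergence $u^{\tau_i}\to u$ on $(0,\infty)\times\mathbb{R}^d$ gives directly $0\le u\le\kappa_\infty$ a.e. in $(0,\infty)\times\mathbb{R}^d$, hence $u\in L^\infty((0,\infty)\times\mathbb{R}^d)$. I would use this second route because it avoids any time-by-time measurability issue.

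For the $L^\rho$ bounds, fix $\rho\in[1,\infty)$. For every $t$, $u(t)$ is a probability density, so $\|u(t)\|_1=1$. Combined with $\|u(t)\|_\infty\le\kappa_\infty$ (valid for a.e.\ $t$ by Fubini, or for every $t>0$ by the first argument), interpolation gives
\begin{equation*}
\|u(t)\|_\rho^\rho\le\|u(t)\|_\infty^{\rho-1}\|u(t)\|_1\le\kappa_\infty^{\rho-1}.
\end{equation*}
Hence $u\in L^\infty((0,\infty),L^\rho(\mathbb{R}^d))$.

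The only delicate point is measurability in time of $t\mapsto\|u(t)\|_\rho$, which is needed to speak of an $L^\infty$-in-time space. This follows from $u\in C([0,\infty),\mathcal{P}_2(\mathbb{R}^d))$: $\|\cdot\|_\rho$ is lower semicontinuous with respect to narrow convergence, so the map $t\mapsto\|u(t)\|_\rho$ is lower semicontinuous and therefore measurable. I expect no real obstacle beyond this bookkeeping.
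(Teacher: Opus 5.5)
Your proof is correct and takes essentially the paper's route: combine the uniform bound \eqref{ehl01} with the convergences of Proposition~\ref{prop.m2} and the fact that each $u(t)$ has unit mass. The only difference is the order of the steps, and it is immaterial. The paper first interpolates at the discrete level, $\|u^{\tau_i}(t)\|_\rho^\rho\le\kappa_\infty^{\rho-1}$, and then passes to the limit by weak lower semicontinuity via \eqref{cv02}. You instead pass the $L^\infty$ bound to the limit first, using indicator test functions or the a.e.\ convergence, and interpolate on $u(t)$ afterwards.
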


With the above stated results at hand, we are ready to complete the proof of Theorem~\ref{thm.gbe}. For that purpose, we proceed as in \cite[Section~7]{Mim2024b}, to which we refer. 

\section*{Acknowledgments}\label{sec.Acknowledgments}
The work of the first author is partially supported by JSPS KAKENHI Grant Number 25K17274 (Early-Career Scientists), 25KJ0279 (Research Fellowship for Young Scientists), and the French Government Scholarship Program, Bourses France Excellence Japon 2025. The first author thanks Laboratoire de Math\'ematiques LAMA in Chambéry and Institut de Math\'ematiques de Toulouse, where part of this work was done, for their hospitality.

\appendix
\section{Estimates for solutions to time discrete heat equations}\label{sec.A}

In this appendix, we collect several auxiliary estimates for solutions to time discrete linear heat equations that are repeatedly used throughout Section~\ref{sec.2}. We first establish discrete $L^{p}$-maximal regularity estimates with nonzero initial data. We then derive elementary $L^2$-energy estimates and discrete counterparts of the classical smoothing properties of the heat equation. All constants appearing below are independent of the time step $\tau\in (0,1)$.

\begin{lemma}\label{lemds02a}
	Consider $(q,s_0)\in (1,\infty)^2$ and two sequences $(f_n)_{n\ge 1}$ and $(z_n)_{n\in\mathbb{N}}$ of functions belonging to $L^{s_0}(\mathbb{R}^d)$ and $W^{2,s_0}(\mathbb{R}^d)$, respectively, which satisfy
	\begin{equation}
		\frac{z_n-z_{n-1}}{\tau} - \Delta z_n + z_n = f_n \;\;\text{ in }\;\;\mathbb{R}^d, \label{zLs00}
	\end{equation}
	for $n\ge 1$. Then, for all $N\ge 1$, 
	\begin{equation}
		\left( \sum_{n=1}^N \|z_n\|_{s_0}^q \right)^{1/q} \le \left( \sum_{n=1}^N \|f_n\|_{s_0}^q \right)^{1/q} + \tau^{-1/q} \|z_0\|_{s_0}, \label{zLs01} 
	\end{equation}
	and there is a positive constant $C_0(q,s_0)>0$ depending only on $d$, $q$ and $s_0$ such that 
	\begin{equation}
		\begin{split}
			\frac{1}{\tau} \left( \sum_{n=1}^N \|z_n - z_{n-1}\|_{s_0}^q \right)^{1/q} & +  \left( \sum_{n=1}^N \|z_n - \Delta z_n\|_{s_0}^q \right)^{1/q} \\
			& \le C_0(q,s_0) \left[\left( \sum_{n=1}^N \|f_n\|_{s_0}^q \right)^{1/q} + \tau^{-1/q} \|z_0-\Delta z_0\|_{s_0} \right].
		\end{split}  \label{zLs02} 
	\end{equation}	
	Moreover,
	\begin{equation}
		\|z_n\|_{s_0} \le \max\left\{ \|z_0\|_{s_0} , \max_{1\le j\le n} \|f_j\|_{s_0} \right\}, \qquad n\ge 1. \label{zLs03} 
	\end{equation}
\end{lemma}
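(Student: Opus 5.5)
The plan is to write the scheme~\eqref{zLs00} in resolvent form and to treat the three estimates separately, in the order \eqref{zLs03}, \eqref{zLs01}, \eqref{zLs02}. Set $A:=I-\Delta$ and $R_\tau:=(I+\tau A)^{-1}=\big((1+\tau)I-\tau\Delta\big)^{-1}$. Then \eqref{zLs00} reads
\begin{equation*}
	z_n = R_\tau z_{n-1} + \tau R_\tau f_n, \qquad n\ge 1.
\end{equation*}
The basic fact I will use is that $\big(I-\frac{\tau}{1+\tau}\Delta\big)^{-1}$ is convolution with a probability density (a rescaled Bessel kernel). Hence it is a contraction on $L^{s_0}(\mathbb{R}^d)$, so that
\begin{equation*}
	\|R_\tau\|_{\mathcal{L}(L^{s_0})}\le (1+\tau)^{-1}.
\end{equation*}

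\emph{Step 1: \eqref{zLs03}.} The contraction bound gives
\begin{equation*}
	\|z_n\|_{s_0}\le \frac{1}{1+\tau}\|z_{n-1}\|_{s_0}+\frac{\tau}{1+\tau}\|f_n\|_{s_0}.
\end{equation*}
The right-hand side is a convex combination, and \eqref{zLs03} follows by induction on $n$.

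\emph{Step 2: \eqref{zLs01}.} Iterating the resolvent form gives
\begin{equation*}
	z_n=R_\tau^n z_0+\tau\sum_{j=1}^n R_\tau^{\,n-j+1}f_j,
\end{equation*}
hence
\begin{equation*}
	\|z_n\|_{s_0}\le (1+\tau)^{-n}\|z_0\|_{s_0}+\sum_{j=1}^n a_{n-j}\|f_j\|_{s_0}, \qquad a_k:=\tau(1+\tau)^{-k-1}.
\end{equation*}
Since $\sum_{k\ge0}a_k=1$, Young's inequality for discrete convolutions ($\ell^1*\ell^q\subset\ell^q$) bounds the $\ell^q$-norm of the second term by $\big(\sum_{n}\|f_n\|_{s_0}^q\big)^{1/q}$. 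For the first term,
\begin{equation*}
	\sum_{n\ge1}(1+\tau)^{-nq}=\frac{1}{(1+\tau)^q-1}\le\frac{1}{q\tau}\le\frac1\tau,
\end{equation*}
which gives the contribution $\tau^{-1/q}\|z_0\|_{s_0}$. Minkowski's inequality then yields \eqref{zLs01}.

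\emph{Step 3: \eqref{zLs02}.} Split $z_n=h_n+y_n$, where $h_n=R_\tau^n z_0$ solves the homogeneous scheme and $y_n$ solves \eqref{zLs00} with $y_0=0$.

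For $h$, $A$ commutes with $R_\tau$, so
\begin{equation*}
	\|Ah_n\|_{s_0}=\|R_\tau^n A z_0\|_{s_0}\le(1+\tau)^{-n}\|z_0-\Delta z_0\|_{s_0}.
\end{equation*}
Moreover $(h_n-h_{n-1})/\tau=-Ah_n$. The computation of Step~2 then bounds both terms on the left of \eqref{zLs02} for $h$ by $\tau^{-1/q}\|z_0-\Delta z_0\|_{s_0}$.

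For $y$, I invoke discrete maximal $\ell^q$-regularity of the implicit Euler scheme for $A=I-\Delta$ on $L^{s_0}(\mathbb{R}^d)$, $1<s_0<\infty$ \cite{APW2002, KLL2016}. The operator $-\Delta$ admits a bounded $H^\infty$-calculus (in particular it is $R$-sectorial) of angle $0$ on the UMD space $L^{s_0}$. Hence the backward Euler approximation has maximal regularity,
\begin{equation*}
	\Big(\sum_{n=1}^N\big\|\tfrac{y_n-y_{n-1}}{\tau}\big\|_{s_0}^q\Big)^{1/q}+\Big(\sum_{n=1}^N\|Ay_n\|_{s_0}^q\Big)^{1/q}\le C_0\Big(\sum_{n=1}^N\|f_n\|_{s_0}^q\Big)^{1/q},
\end{equation*}
with $C_0$ independent of $\tau$ and $N$. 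Independence of $N$ is guaranteed because $0\in\rho(A)$ ($A$ is invertible with exponentially decaying semigroup), so no restriction to finite time horizons is needed. Adding the estimates for $h$ and $y$ gives \eqref{zLs02}.

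The main obstacle is Step~3 for $y$: ensuring that the cited discrete maximal regularity result applies with a constant uniform both in $\tau\in(0,1)$ and in $N$. I would first check the hypotheses in \cite{KLL2016}, which require $R$-boundedness of $\{\lambda(\lambda+A)^{-1}\}$ on a sector, or equivalently of the family $\{(\tau A)^k(I+\tau A)^{-k}\}$ in the discrete setting. Should the global-in-$N$ version not be stated there, I would fall back on the global form on $\mathbb{N}$ obtained via the discrete Fourier multiplier (Blunck) theorem. Invertibility of $A$ makes that multiplier bounded at the zero frequency, so the global form holds on $\mathbb{N}$.
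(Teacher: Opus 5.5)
Your proposal is correct and follows the paper's proof step by step. Both use the same one-step bound $\|z_n\|_{s_0}\le (1+\tau)^{-1}\|z_{n-1}\|_{s_0}+\tau(1+\tau)^{-1}\|f_n\|_{s_0}$ for \eqref{zLs03}, and the same splitting of $z_n$ into a homogeneous part carrying $z_0$ and a zero-initial-datum part. The homogeneous part is handled by geometric decay, applied also to $z_n-\Delta z_n$ since $I-\Delta$ commutes with the scheme; the zero-initial-datum part is handled by the $\ell^q$-contraction and by citing discrete maximal regularity \cite{KLL2016, APW2002}.

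The differences are cosmetic. You get the $L^{s_0}$-contraction from positivity of the Bessel kernel instead of testing with $|z_n|^{s_0-2}z_n$, and you use the discrete Young inequality on the explicit representation instead of Minkowski's inequality on the recursion. Your passing identification of the relevant discrete $R$-boundedness condition with that of $\{(\tau A)^k(I+\tau A)^{-k}\}$ is not accurate, but it is immaterial since, like the paper, you simply cite the result.
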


\begin{proof}
	Let $n\ge 1$. We infer from~\eqref{zLs00} and H\"older's inequality that
	\begin{align*}
		(1+\tau) \|z_n\|_{s_0}^{s_0} & \le (1+\tau) \|z_n\|_{s_0}^{s_0} + \tau (s_0-1) \int_{\mathbb{R}^d} |z_n|^{s_0-2} |\nabla z_n|^2 \, \mathrm{d}x \\
		& = \int_{\mathbb{R}^d} |z_n|^{s_0-2} z_n \left[ (1+\tau) z_n - \Delta z_n \right]\, \mathrm{d}x \\
		& = \int_{\mathbb{R}^d} |z_n|^{s_0-2} z_n \left( z_{n-1} + \tau f_n \right)\, \mathrm{d}x \\
		& \le \|z_n\|_{s_0}^{s_0-1} \left( \|z_{n-1}\|_{s_0} + \tau \|f_n\|_{s_0} \right), 
	\end{align*}
	whence
	\begin{equation}
		\|z_n\|_{s_0} \le \frac{1}{1+\tau} \|z_{n-1}\|_{s_0} + \frac{\tau}{1+\tau} \|f_n\|_{s_0}, \qquad n\ge 1. \label{zLs04}
	\end{equation}
	Iterating~\eqref{zLs04}, we obtain
	\begin{align*}
		\|z_n\|_{s_0} & \le \frac{1}{(1+\tau)^n} \|z_0\|_{s_0} + \frac{\tau}{1+\tau} \sum_{j=1}^n \frac{\|f_j\|_{s_0}}{(1+\tau)^{n-j}} \\
		& \le \frac{1}{(1+\tau)^n} \|z_0\|_{s_0} + \frac{\tau}{(1+\tau)^{n+1}} \left( \sum_{j=1}^n (1+\tau)^j \right) \max_{1\le j\le n} \|f_j\|_{s_0} \\
		& = \frac{1}{(1+\tau)^n} \|z_0\|_{s_0} + \left( 1 - \frac{1}{(1+\tau)^n} \right) \max_{1\le j\le n} \|f_j\|_{s_0}, 
	\end{align*}
	from which~\eqref{zLs03} follows.
	
	We next split $z_n$ into the uniquely defined functions $\widehat{z}_n$ and $\widetilde{z}_n$ satisfying $z_n=\widehat{z}_n+\widetilde{z}_n$ and
	\begin{equation}
		\begin{split}
			\left\{
			\begin{aligned}
				&\frac{\widehat{z}_n-\widehat{z}_{n-1}}{\tau}-\Delta \widehat{z}_n+\widehat{z}_n=f_{n} \;\;\text{ in }\;\;\mathbb{R}^d,
				\\
				&\widehat{z}_0=0 \;\;\text{ in }\;\;\mathbb{R}^d,
			\end{aligned}
			\right.
			\quad
			\left\{
			\begin{aligned}
				&\frac{\widetilde{z}_n-\widetilde{z}_{n-1}}{\tau}-\Delta \widetilde{z}_n+\widetilde{z}_n=0 \;\;\text{ in }\;\;\mathbb{R}^d,
				\\
				&\widetilde{z}_0=z_0 \;\;\text{ in }\;\;\mathbb{R}^d.
			\end{aligned}
			\right.
		\end{split} \label{zLs05}
	\end{equation}
	On the one hand, according to~\eqref{zLs05}, $(\widehat{z}_n)_{n\ge 1}$ solves the same system as $(z_n)_{n\ge 1}$ but with initial condition $\widehat{z}_0=0$, so that it satisfies~\eqref{zLs04}; that is,
	\begin{equation*}
		\|\widehat{z}_n\|_{s_0} \le \frac{1}{1+\tau} \|\widehat{z}_{n-1}\|_{s_0} + \frac{\tau}{1+\tau} \|f_n\|_{s_0}, \qquad n\ge 1. 
	\end{equation*}
	Therefore,
	\begin{align*}
		\left( \sum_{n=1}^N \big\|\widehat{z}_n\big\|_{s_0}^q \right)^{1/q} & \le \frac{1}{1+\tau} \left( \sum_{n=1}^N \big\|\widehat{z}_{n-1}\big\|_{s_0}^q  \right)^{1/q} + \frac{\tau}{1+\tau} \left( \sum_{n=1}^N \|f_n\|_{s_0}^q  \right)^{1/q} \\
		& = \frac{1}{1+\tau} \left( \sum_{n=1}^{N-1} \big\|\widehat{z}_{n}\big\|_{s_0}^q  \right)^{1/q} + \frac{\tau}{1+\tau} \left( \sum_{n=1}^N \|f_n\|_{s_0}^q  \right)^{1/q} ,
	\end{align*}
	due to $\widehat{z}_0=0$, from which we deduce that
	\begin{equation}
		\left( \sum_{n=1}^N \big\|\widehat{z}_n\big\|_{s_0}^q \right)^{1/q}  \le \left( \sum_{n=1}^N \|f_n\|_{s_0}^q  \right)^{1/q} . \label{zLs06}
	\end{equation}
	Furthermore, it follows from the scheme~\eqref{zLs05} for $\widehat{z}_n$, the maximal regularity of $I-\Delta$ in $L^{s_0}(\mathbb{R}^d)$ and \cite[Theorem~3.1]{KLL2016}, see also \cite[Remark~5.2]{APW2002}, that there is $C_{MR}(q,s_0)>0$ depending only on $d$, $s_0$ and $q$ such that, for all $N\ge 1$,
	\begin{equation}
		\begin{split}
			\frac{1}{\tau} \left( \sum_{n=1}^N \big\| \widehat{z}_n - \widehat{z}_{n-1}\big\|_{s_0}^q \right)^{1/q} & + \left( \sum_{n=1}^N \left\| \widehat{z}_n - \Delta \widehat{z}_n \right\|_{s_0}^q \right)^{1/q}  \le C_{MR}(q,s_0) \left( \sum_{n=1}^N \big\|f_{n} \big\|_{s_0}^q \right)^{1/q}.
		\end{split}\label{zLs07}
	\end{equation}
	On the other hand, we claim that
	\begin{equation}
		\left\|\widetilde{z}_n\right\|_{s_0} \le \frac{\|z_0\|_{s_0}}{(1+\tau)^n}, \qquad n\in\mathbb{N}. \label{zLs08}
	\end{equation}
	Indeed, for $n\ge 1$, the scheme~\eqref{zLs05} for $\widetilde{z}_n$ reads
	\begin{equation*}
		(1+\tau) \widetilde{z}_n - \tau\Delta\widetilde{z}_n = \widetilde{z}_{n-1} \;\;\text{ in }\;\;\mathbb{R}^d.
	\end{equation*}
	Multiplying both sides of the above identity by $|\widetilde{z}_n|^{s_0-2} \widetilde{z}_n$ and integrating over $\mathbb{R}^d$ give
	\begin{equation*}
		(1+\tau) \big\|\widetilde{z}_n\big\|_{s_0}^{s_0} - \tau \int_{\mathbb{R}^d} |\widetilde{z}_n|^{s_0-2} \widetilde{z}_n \Delta\widetilde{z}_n \, dx = \int_{\mathbb{R}^d} |\widetilde{z}_n|^{s_0-2} \widetilde{z}_n \widetilde{z}_{n-1} \, dx.
	\end{equation*}
	By integration by parts and H\"older's inequality, we further obtain
	\begin{equation*}
		(1+\tau) \big\|\widetilde{z}_n\big\|_{s_0}^{s_0} \le (1+\tau) \big\|\widetilde{z}_n\big\|_{s_0}^{s_0} + \tau (s_0-1) \int_{\mathbb{R}^d} |\widetilde{z}_n|^{s_0-2} |\nabla\widetilde{z}_n|^2 \, dx \le \big\|
		\widetilde{z}_n\big\|_{s_0}^{s_0-1} \big\|\widetilde{z}_ {n-1}\big\|_{s_0},
	\end{equation*}
	from which we deduce that
	\begin{equation*}
		(1+\tau) \big\|\widetilde{z}_n\big\|_{s_0} \le \big\|\widetilde{z}_{n-1}\big\|_{s_0}, \qquad n\ge 1.
	\end{equation*}
	Iterating the above estimate gives the claim~\eqref{zLs08} as $\widetilde{z}_0=z_0$. Now, for $N\ge 1$, we compute the sum
	\begin{align*}
		\left( \sum_{n=1}^N \frac{1}{(1+\tau)^{nq}} \right)^{1/q} & = \left( \frac{(1+\tau)^{-q} \bigl(1-(1+\tau)^{-Nq}\bigr)}{1-(1+\tau)^{-q}}\right)^{1/q} \\
		& = \left( \frac{1 - (1+\tau)^{-Nq}}{(1+\tau)^{q}-1} \right)^{1/q}
	\end{align*}
	and obtain, using $1-(1+\tau)^{-Nq}\le1$ and  $(1+\tau)^{q}-1\ge q\tau$, 
	\begin{equation*}
		\left( \sum_{n=1}^N \frac{1}{(1+\tau)^{nq}} \right)^{1/q} \le (q\tau)^{-1/q} \le \tau^{-1/q}.
	\end{equation*}
	Together with~\eqref{zLs08}, the above inequality gives
	\begin{equation}
		\left( \sum_{n=1}^N \big\|\widetilde{z}_n\big\|_{s_0}^q \right)^{1/q} \le \tau^{-1/q} \|z_0\|_{s_0}. \label{zLs09}
	\end{equation}
	Observing that $\widetilde{z}_n - \Delta\widetilde{z}_n$ satisfies the same equation as $\widetilde{z}_n$ for $n\ge 1$ but with initial condition $z_0-\Delta z_0\in L^{s_0}(\mathbb{R}^d)$, we readily conclude from~\eqref{zLs09} that
	\begin{equation}
		\left( \sum_{n=1}^N \big\|\widetilde{z}_n - \Delta\widetilde{z}_n \big\|_{s_0}^q \right)^{1/q} \le \tau^{-1/q} \|z_0 - \Delta z_0\|_{s_0}, \qquad N\ge 1. \label{zLs10}
	\end{equation}
	
	We now combine~\eqref{zLs06} and~\eqref{zLs09} to obtain
	\begin{align*}
		\left( \sum_{n=1}^N \|z_n\|_{s_0}^q \right)^{1/q} & \le \left( \sum_{n=1}^N \big\|\widehat{z}_n\big\|_{s_0}^q \right)^{1/q}  + \left( \sum_{n=1}^N \big\|\widetilde{z}_n\big\|_{s_0}^q \right)^{1/q} \\
		& \le \left( \sum_{n=1}^N \|f_n\|_{s_0}^q \right)^{1/q} + \tau^{-1/q} \|z_0\|_{s_0},
	\end{align*}
	thus establishing~\eqref{zLs01}. Similarly, we infer from~\eqref{zLs05}, \eqref{zLs07} and~\eqref{zLs10} that
	\begin{align*}
		& \frac{1}{\tau} \left( \sum_{n=1}^N \|z_n - z_{n-1}\|_{s_0}^q \right)^{1/q} + \left( \sum_{n=1}^N \|z_n - \Delta z_n\|_{s_0}^q \right)^{1/q} \\
		& \quad \le \frac{1}{\tau} \left( \sum_{n=1}^N \big\|\widehat{z}_n - \widehat{z}_{n-1}\|_{s_0}^q \right)^{1/q} + \left( \sum_{n=1}^N \big\|\widehat{z}_n - \Delta\widehat{z}_n \big\|_{s_0}^q \right)^{1/q} \\
		&\quad\quad + \frac{1}{\tau} \left( \sum_{n=1}^N \big\|\widetilde{z}_n - \widetilde{z}_{n-1}\|_{s_0}^q \right)^{1/q} + \left( \sum_{n=1}^N \big\|\widetilde{z}_n - \Delta\widetilde{z}_n \big\|_{s_0}^q \right)^{1/q} \\
		& \quad\le C_{MR}(q,s_0)  \left( \sum_{n=1}^N \|f_n\|_{s_0}^q \right)^{1/q} + 2 \left( \sum_{n=1}^N \big\|\widetilde{z}_n - \Delta\widetilde{z}_n \big\|_{s_0}^q \right)^{1/q} \\
		& \quad\le C_{MR}(q,s_0)  \left( \sum_{n=1}^N \|f_n\|_{s_0}^q \right)^{1/q} + 2 \tau^{-1/q} \|z_0-\Delta z_0\|_{s_0},
	\end{align*}
	which proves~\eqref{zLs02} with $C_0(q,s_0) := \max\big\{ C_{MR}(q,s_0) , 2 \big\}$ and completes the proof of Lemma~\ref{lemds02a}.
\end{proof}

We next establish a basic $L^2$-energy estimate for solutions to the time discrete heat equation.

\begin{lemma}\label{lemA2}
Consider two sequences $(f_n)_{n\ge 1}$ and $(z_n)_{n\in\mathbb{N}}$ of functions belonging to $L^{2}(\mathbb{R}^d)$ and $W^{2,2}(\mathbb{R}^d)$, respectively, which satisfy~\eqref{zLs00} for $n\ge 1$ and
\begin{equation}
	F_2 := \sup_{n\ge 1} \|f_n\|_2 < \infty. \label{zLs11}
\end{equation} 
Then, 
\begin{equation}
\begin{split}
	\|z_n\|_{W^{1,2}} & \le \max\left\{ \|z_0\|_{W^{1,2}} , F_2 \right\}, \qquad n\in\mathbb{N}, \\
	\tau \sum_{j=1}^n \left( \|\Delta z_j\|_2^2 + \|\nabla z_j\|_{2}^2 \right)  & \le \|\nabla z_{0}\|_2^2 + n\tau F_2^2, \qquad n\ge 1.
\end{split}\label{zLs12} 
\end{equation}
\end{lemma}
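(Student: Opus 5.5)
We test~\eqref{zLs00}, rewritten as $(1+\tau)z_n - \tau\Delta z_n = z_{n-1} + \tau f_n$, against $z_n$ and against $-\Delta z_n$. This gives one-step recursions of the same form as~\eqref{zLs04} for $\|z_n\|_2$ and $\|\nabla z_n\|_2$, and we iterate them as in the proof of~\eqref{zLs03}. For the summed estimate, we keep the dissipative terms instead of discarding them and telescope.

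\textbf{Step 1 ($L^2$ bound).} This is simply~\eqref{zLs03} with $s_0=2$, which gives $\|z_n\|_2\le\max\{\|z_0\|_2,F_2\}$.

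\textbf{Step 2 (gradient bound).} Multiply~\eqref{zLs00} by $-\tau\Delta z_n$ and integrate by parts; the regularity $z_n\in W^{2,2}(\mathbb{R}^d)$ justifies this. We obtain
\begin{equation*}
(1+\tau)\|\nabla z_n\|_2^2+\tau\|\Delta z_n\|_2^2=\int_{\mathbb{R}^d}\nabla z_{n-1}\cdot\nabla z_n\,dx-\tau\int_{\mathbb{R}^d} f_n\Delta z_n\,dx .
\end{equation*}
We bound the first term on the right-hand side by $\tfrac12\|\nabla z_{n-1}\|_2^2+\tfrac12\|\nabla z_n\|_2^2$ and the second by $\tfrac{\tau}{2}\|f_n\|_2^2+\tfrac{\tau}{2}\|\Delta z_n\|_2^2$. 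This yields
\begin{equation*}
(1+2\tau)\|\nabla z_n\|_2^2+\tau\|\Delta z_n\|_2^2\le\|\nabla z_{n-1}\|_2^2+\tau F_2^2 .
\end{equation*}
Dropping the Laplacian term and iterating exactly as in the proof of~\eqref{zLs03} bounds $\|\nabla z_n\|_2^2$ by a convex combination of $\|\nabla z_0\|_2^2$ and $F_2^2/2\le F_2^2$. Combined with Step 1, this gives the first line of~\eqref{zLs12}, possibly up to a harmless constant depending on the choice of $W^{1,2}$-norm. If an exact constant $1$ is wanted, I would instead test~\eqref{zLs00} against $z_n-\Delta z_n$ and use $\|z\|_{W^{1,2}}^2=\|z\|_2^2+\|\nabla z\|_2^2$ directly, since the same algebra produces a contraction of the combined quantity.

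\textbf{Step 3 (summed bound).} Keep the term $\tau\|\Delta z_j\|_2^2$ in the inequality of Step 2, written as $2\tau\|\nabla z_j\|_2^2+\tau\|\Delta z_j\|_2^2\le\|\nabla z_{j-1}\|_2^2-\|\nabla z_j\|_2^2+\tau F_2^2$. Summing over $j=1,\dots,n$ makes the right-hand side telescope to $\|\nabla z_0\|_2^2-\|\nabla z_n\|_2^2+n\tau F_2^2$. Since $2\tau\|\nabla z_j\|_2^2\ge\tau\|\nabla z_j\|_2^2$, this gives the second line of~\eqref{zLs12} directly.

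The only delicate point is the bookkeeping of the Young's-inequality constants, so that the right-hand side comes out as exactly $\|\nabla z_0\|_2^2+n\tau F_2^2$. The splitting above achieves this, so no genuine obstacle is expected.
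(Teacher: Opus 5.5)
Your proof is correct and follows the paper's argument essentially step by step: the $L^2$ bound from \eqref{zLs03}, then testing with $-\Delta z_n$ to reach exactly the paper's recursion \eqref{zLs14}, namely $(1+2\tau)\|\nabla z_n\|_2^2+\tau\|\Delta z_n\|_2^2\le\|\nabla z_{n-1}\|_2^2+\tau F_2^2$, followed by iteration and telescoping. Your side remark is also right and is slightly sharper than the paper: combining \eqref{zLs13} and \eqref{zLs15}, as the paper does, gives the first line of \eqref{zLs12} only up to a factor $\sqrt{2}$ when $\|z\|_{W^{1,2}}^2=\|z\|_2^2+\|\nabla z\|_2^2$, whereas testing against $z_n-\Delta z_n$ yields $(1+\tau)\|z_n\|_{W^{1,2}}^2\le\|z_{n-1}\|_{W^{1,2}}^2+\tau F_2^2$ and hence the stated constant $1$.
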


\begin{proof}
It first follows from~\eqref{zLs03} (with $s_0=2$) that
\begin{equation}
	\|z_n\|_2 \le \max\left\{ \|z_0\|_2 , F_2 \right\}, \qquad n\ge 1. \label{zLs13}
\end{equation}
Next, for $n\ge 1$, we multiply~\eqref{zLs00} by $-\Delta z_n$ and integrate over $\mathbb{R}^d$ to obtain
\begin{equation*}
	\int_{\mathbb{R}^d} \nabla(z_n - z_{n-1})\cdot \nabla z_n\ dx + \tau \|\Delta z_n\|_2^2 + \tau \|\nabla z_n\|_2^2 = - \tau \int_{\mathbb{R}^d} f_n \Delta z_n\ dx,
\end{equation*}
from which we deduce, thanks to the Cauchy-Schwarz inequality,
\begin{equation*}
	\frac{\|\nabla z_n\|_2^2 - \|\nabla z_{n-1}\|_2^2}{2} + \tau \|\Delta z_n\|_2^2 + \tau \|\nabla z_n\|_2^2 \le \frac{\tau}{2} \left( \|\Delta z_n\|_2^2 + ||f_n\|_2^2 \right).   
\end{equation*}
We then infer from~\eqref{zLs11} and the above inequality that
\begin{equation}
	(1+\tau) \|\nabla z_n\|_2^2 + \tau \|\Delta z_n\|_2^2 + \tau \|\nabla z_n\|_2^2 \le \|\nabla z_{n-1}\|_2^2 + \tau F_2^2, \qquad n\ge 1. \label{zLs14}
\end{equation}
A first consequence of~\eqref{zLs14} is that $(\nabla z_n)_{n\ge 1}$ satisfies
\begin{equation*}
	\|\nabla z_n\|_2^2 \le \frac{1}{1+\tau} \|\nabla z_{n-1}\|_2^2 + \frac{\tau}{1+\tau} F_2^2, \qquad n\ge 1,
\end{equation*}
from which we deduce
\begin{equation}
	\|\nabla z_n\|_2^2 \le \frac{1}{(1+\tau)^n} \|\nabla z_0\|_2^2 + \left( 1 - \frac{1}{(1+\tau)^n} \right) F_2^2 \le \max\left\{ \|\nabla z_0\|_2 , F_2^2 \right\},  \qquad n\ge 1. \label{zLs15}
\end{equation}
Gathering~\eqref{zLs13} and~\eqref{zLs15} provides the first estimate in~\eqref{zLs12}. We next sum up~\eqref{zLs14} with respect to $n\in\{1,\dots,N\}$, $N\ge 1$, and thereby find
\begin{equation*}
	(1+\tau) \sum_{n=1}^N \|\nabla z_n\|_2^2 + \tau \sum_{n=1}^N \left( \|\Delta z_n\|_2^2 + \|\nabla z_n\|_{2}^2 \right) \le \sum_{n=1}^N \|\nabla z_{n-1}\|_2^2 + N\tau F_2^2,
\end{equation*}
whence
\begin{equation*}
	\tau \sum_{n=1}^N \left( \|\Delta z_n\|_2^2 + \|\nabla z_n\|_{2}^2 \right)  \le \|\nabla z_{0}\|_2^2 + N\tau F_2^2,
\end{equation*}
which completes the proof.
\end{proof}

We finally provide smoothing properties at the discrete level, which are used in the proof of Lemma~\ref{lemds05}, to improve the regularity on $(w_n)_{n\ge1}$ and $(v_n)_{n\ge1}$.

\begin{lemma}\label{lemA3}
Consider $p\in (d,\infty)$ and two sequences $(f_n)_{n\ge 1}$ and $(z_n)_{n\in\mathbb{N}}$ belonging to $L^p(\mathbb{R}^d)$ and $W^{2,p}(\mathbb{R}^d)$, respectively, which satisfy~\eqref{zLs00} for $n\ge 1$. If 
\begin{equation}
	F_p := \sup_{n\ge 1} \|f_n\|_p < \infty, \label{zLs16a}
\end{equation}
then there is a positive constant $C_1(p)>0$ depending only on $d$ and $p$ such that
\begin{equation}
	\|z_n\|_{W^{1,\infty}} \le C_1(p) \left( \|z_0\|_{W^{1,\infty}} + F_p \right), \qquad n\in\mathbb{N}. \label{zLs17a}
\end{equation}
Assume next that $(f_n)_{n\ge 1}$ is a sequence in $W^{1,\infty}(\mathbb{R}^d)$ such that
\begin{equation}
	F_{1,\infty} := \sup_{n\ge 1} \|f_n\|_{W^{1,\infty}} < \infty. \label{zLs16b}
\end{equation}
Then there is a positive constant $C_2>0$ such that
\begin{equation}
	\|z_n-\Delta z_n\|_\infty \le C_2 \left( \|z_0-\Delta z_0\|_\infty + F_{1,\infty} \right), \qquad n\ge 1. \label{zLs17b}
\end{equation}
\end{lemma}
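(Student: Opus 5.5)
Both estimates come from writing the scheme~\eqref{zLs00} as a resolvent iteration and using explicit kernel bounds. Setting $\lambda := (1+\tau)/\tau$, \eqref{zLs00} reads
\begin{equation*}
	z_n = R\, z_{n-1} + \tau R f_n, \qquad R := \big((1+\tau) I - \tau\Delta\big)^{-1} = \tau^{-1}(\lambda - \Delta)^{-1},
\end{equation*}
and therefore
\begin{equation*}
	z_n = R^n z_0 + \tau\sum_{j=1}^n R^{n+1-j} f_j .
\end{equation*}
The operator $R^k$ is convolution with a positive kernel. Using the subordination formula
\begin{equation*}
	(\lambda-\Delta)^{-k} = \frac{1}{\Gamma(k)}\int_0^\infty t^{k-1}e^{-\lambda t}e^{t\Delta}\,dt,
\end{equation*}
we have $\tau^{-k}(\lambda-\Delta)^{-k} = \int_0^\infty \gamma_k(t)\,e^{t\Delta}\,dt$, where $\gamma_k(t) = t^{k-1}e^{-t/\tau}e^{-t}/(\Gamma(k)\tau^k)$ is a Gamma density times $e^{-t}$. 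Thus $R^k$ is an average of heat semigroups with total weight $(1+\tau)^{-k}$.

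\textbf{First estimate.} The term $R^n z_0$ is controlled directly: $R$ commutes with $\nabla$ and is a contraction on $L^\infty$, so $\|R^n z_0\|_{W^{1,\infty}}\le \|z_0\|_{W^{1,\infty}}$. For the forcing term I use the heat estimate
\begin{equation*}
	\|e^{t\Delta} f\|_{W^{1,\infty}} \le C\,\big(1+t^{-d/(2p)}+t^{-1/2-d/(2p)}\big)\,\|f\|_p ,
\end{equation*}
which gives
\begin{equation*}
	\|z_n - R^n z_0\|_{W^{1,\infty}} \le C F_p\int_0^\infty \Big(\tau\sum_{k\ge1}\gamma_k(t)\Big)\big(1+t^{-1/2-d/(2p)}\big)\,dt .
\end{equation*}
The key computation is that $\tau\sum_{k\ge1}\gamma_k(t) = e^{-t}\,\tau\sum_{k\ge 1} t^{k-1}e^{-t/\tau}/(\Gamma(k)\tau^k) = e^{-t}$, the discrete analogue of $\int_0^\infty e^{-s}e^{s\Delta}\,ds$. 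Since $1/2+d/(2p)<1$ because $p>d$, the resulting integral $\int_0^\infty e^{-t}(1+t^{-1/2-d/(2p)})\,dt$ is finite. Here the summation over $j\le n$ is bounded by the full sum over $k\ge 1$, so the bound is uniform in $n$ and $\tau$, which gives~\eqref{zLs17a}.

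\textbf{Second estimate.} Let $g_n := z_n - \Delta z_n$. It satisfies the same scheme with forcing $f_n - \Delta f_n$, so
\begin{equation*}
	g_n = R^n g_0 + \tau\sum_j R^{n+1-j}(f_j-\Delta f_j).
\end{equation*}
The first piece and the $f_j$ part are bounded in $L^\infty$ by $\|g_0\|_\infty$ and by $\int_0^\infty e^{-t}\,dt\,F_{1,\infty}$ respectively. For the $\Delta f_j$ part I write $\Delta e^{t\Delta}f = \nabla e^{t\Delta}\cdot\nabla f$ and use $\|\nabla e^{t\Delta}\|_{\infty\to\infty}\le Ct^{-1/2}$. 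This gives a weight $\int_0^\infty e^{-t}t^{-1/2}\,dt<\infty$ times $\|\nabla f_j\|_\infty\le F_{1,\infty}$, hence~\eqref{zLs17b}.

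\textbf{Main obstacle.} The main point is the $\tau$-uniform bound on the summed kernel, i.e.\ the identity $\tau\sum_k\gamma_k = e^{-t}$. This requires justifying the subordination representation of the discrete resolvent powers, and interchanging sum and integral by positivity. A secondary point is justifying that $z_n\in W^{2,p}$ is indeed given by this formula; uniqueness of $(1+\tau)z-\tau\Delta z = h$ in $W^{2,p}$ settles it.
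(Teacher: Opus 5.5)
Your proposal is correct and uses the same framework as the paper. You write the resolvent powers as Gamma-subordinated averages of the heat semigroup (your $\int_0^\infty\gamma_k(t)G_t\,dt$ is exactly the paper's kernel $\mathcal{B}_\tau^k$ after the substitution $t=\tau s$), treat the forcing with Young's inequality and heat-kernel scaling, and for the second estimate move one derivative of $\Delta f_k$ onto the kernel.

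Where you genuinely differ is the summation step. The paper bounds each $\|\mathcal{B}_\tau^k\|_{p'}$ and $\|\nabla\mathcal{B}_\tau^k\|_{p'}$ separately by Gamma ratios $\Gamma(k-\beta)/\Gamma(k)$ times powers of $\tau/(1+\tau)$. It then controls $\sum_k(1+\tau)^{-(k-1)}\Gamma(k-\beta)/\Gamma(k)$ via Gautschi's inequality, a comparison with $\int_0^\infty\sigma^{-\beta}(1+\tau)^{-\sigma}\,d\sigma$, and $\ln(1+\tau)\ge\tau/2$, which cancels the $\tau^{1-\beta}$ prefactor. You instead sum the subordination weights before estimating, using the exact identity $\tau\sum_{k\ge1}\gamma_k(t)=e^{-t}$, justified by Tonelli since all terms are nonnegative. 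This says that $\tau\sum_{k\ge1}\mathcal{B}_\tau^k$ is precisely the kernel of $(I-\Delta)^{-1}$. The discrete Duhamel sum is therefore dominated by the continuous one, $\tau$-uniformity is immediate, and you get the continuous constants, e.g.\ $\Gamma(1-\tfrac{d}{2p})\|G_1\|_{p'}$ rather than four times that.

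Your route is shorter and applies verbatim to any bound $\|e^{t\Delta}\|_{X\to Y}\le\phi(t)$ with $\int_0^\infty e^{-t}\phi(t)\,dt<\infty$. The paper's per-kernel bounds keep the $k$-dependence of each term, which would matter for time-weighted or decay estimates but is not needed for the uniform bounds in this lemma.
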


\begin{proof}
Since $A=\mathrm{id} - \Delta$ is the infinitesimal generator of a semigroup of contractions in $L^\rho(\mathbb{R}^d)$ for all $\rho\in [1,\infty)$, the resolvent $R(\lambda,A) := (\lambda + A)^{-1}$ can be expressed in terms of the corresponding semigroup $\{e^{tA}\}_{t\ge 0}$ according to the following formula
\begin{equation*}
	R(\lambda,A) = \int_0^\infty e^{-\lambda t} e^{tA}\ dt, \qquad \lambda>0, 
\end{equation*}
see \cite[Theorem~II.1.10]{EnNa2000} for instance, and a similar formula is available for its powers \cite[Corollary~II.1.11]{EnNa2000}
\begin{equation}
	R^n(\lambda,A) = \int_0^\infty \frac{t^{n-1}}{\Gamma(n)} e^{-\lambda t} e^{tA}\ dt, \qquad \lambda>0, \ n\ge 1. \label{zLs18}
\end{equation} 
With this notation, an equivalent formulation of~\eqref{zLs00} reads
\begin{equation*}
	z_n = \frac{1}{\tau} R\left( \frac{1}{\tau},A\right) z_{n-1} + R\left( \frac{1}{\tau},A\right) f_{n}, \qquad n\ge 1, 
\end{equation*}
from which we deduce by induction that
\begin{equation}
	z_n = \frac{1}{\tau^n} R^n\left( \frac{1}{\tau},A\right) z_{0} + \tau \sum_{k=1}^n \frac{1}{\tau^{n+1-k}} R^{n+1-k}\left( \frac{1}{\tau},A\right) f_{k}, \qquad n\ge 1. \label{zLs19}
\end{equation}
At this point, we recall that the semigroup $\{e^{tA}\}_{t\ge 0}$ is explicit in that case and is given by
\begin{equation*}
	e^{tA} z = e^{-t} G_t* z, \quad G_t(x) := (4\pi t)^{-d/2} e^{-|x|^2/(4t)}, \qquad (t,x)\in (0,\infty)\times\mathbb{R}^d.
\end{equation*}
Combining~\eqref{zLs18} and~\eqref{zLs19}, we obtain the representation formula
\begin{align*}
	\frac{1}{\tau^n} R^n\left( \frac{1}{\tau},A\right) z =\; & \frac{1}{\tau^n} \int_0^\infty  \frac{t^{n-1}}{\Gamma(n)} e^{-(1+\tau)t/\tau} G_t*z\ dt = \int_0^\infty  \frac{t^{n-1}}{\Gamma(n)} e^{-(1+\tau)t} G_{\tau t}*z\ dt \\
	=\; & \mathcal{B}_{\tau}^n * z, \qquad n\ge 1,
\end{align*}
with
\begin{equation}
	\mathcal{B}_{\tau}^\alpha(x) := \int_0^\infty \frac{t^{\alpha-1}}{\Gamma(\alpha)} e^{-(1+\tau)t} G_{\tau t}(x)\ dt, \qquad x\in\mathbb{R}^d, \ \alpha>1. \label{zLs22}
\end{equation}
Plugging~\eqref{zLs22} in~\eqref{zLs19} leads us to 
\begin{equation}
	z_n = \mathcal{B}_\tau^n * z_0 + \tau \sum_{k=1}^n \mathcal{B}_\tau^{n+1-k} * f_k = \mathcal{B}_\tau^n * z_0 + \tau \sum_{k=1}^n \mathcal{B}_\tau^k * f_{n+1-k}, \qquad n\ge 1. \label{zLs23}
\end{equation}
We next recall that the self-similarity of the heat kernel ensures that, for $\rho\in [1,\infty]$ and $\alpha>1$,
\begin{subequations}\label{zLs24}
\begin{align}
	\|\mathcal{B}_\tau^\alpha\|_\rho & \le\; \frac{\Gamma\left( \alpha - \frac{d(\rho-1)}{2\rho} \right)}{\Gamma(\alpha)} \frac{\|G_1\|_\rho}{(1+\tau)^{\alpha}} \left( \frac{1+\tau}{\tau} \right)^{d(\rho-1)/(2\rho)}, \qquad \alpha > \frac{d}{2}\left( 1 - \frac{1}{\rho}\right), \label{zLs24a}\\
	\|\nabla\mathcal{B}_\tau^\alpha\|_\rho & \le\; \frac{\Gamma\left( \alpha - \frac{1}{2} - \frac{d(\rho-1)}{2\rho} \right)}{\Gamma(\alpha)} \frac{\|\nabla G_1\|_\rho}{(1+\tau)^{\alpha}} \left( \frac{1+\tau}{\tau} \right)^{[\rho+d(\rho-1)]/(2\rho)}, \qquad \alpha > \frac{1}{2} + \frac{d}{2}\left( 1 - \frac{1}{\rho}\right). \label{zLs24b}
\end{align}
\end{subequations}
Pick now $p\in (d,\infty)$ and denote its conjugate by $p'=p/(p-1)$. Then, for $n\ge 1$ and $1\le k \le n$, 
\begin{equation}
	k - \frac{1}{2} - \frac{d}{2}\left( 1 - \frac{1}{p'}\right) \ge 1 - \frac{1}{2} - \frac{d}{2p} = \frac{p-d}{2p}>0, \label{zLs25}
\end{equation}
and we infer from~\eqref{zLs16a}, \eqref{zLs23}, \eqref{zLs24} and Young's inequality (for convolutions) that
\begin{align*}
	\|z_n\|_\infty \le\; & \|\mathcal{B}_\tau^n\|_1 \|z_0\|_\infty + \tau \sum_{k=1}^{n} \|\mathcal{B}_\tau^{k}\|_{p'} \| f_{n+1-k}\|_p \\
	\le\; & \frac{\|G_1\|_1}{(1+\tau)^n} \|z_0\|_\infty + \|G_1\|_{p'} F_p \left( \frac{\tau}{1+\tau} \right)^{(2p-d)/(2p)} \sum_{k=1}^n \frac{1}{(1+\tau)^{k-1}} \frac{\Gamma\left(k - \frac{d}{2p}\right)}{\Gamma(k)}.
	\eqntag
	\label{zLs26}
\end{align*}
To estimate the sum on the right-hand side of~\eqref{zLs26}, we use Gautschi's inequality
\begin{equation*}
	\frac{\Gamma(y+s)}{\Gamma(y+1)} \le y^{s-1}, \qquad (s,y)\in (0,1)\times (0,\infty),
\end{equation*}
to obtain
\begin{align*}
	\sum_{k=1}^n \frac{1}{(1+\tau)^{k-1}} \frac{\Gamma\left(k - \frac{d}{2p}\right)}{\Gamma(k)} \le\; & \Gamma\left( \frac{2p-d}{2p} \right) + \sum_{k=2}^{n} \frac{1}{(1+\tau)^{k-1}} \frac{\Gamma\left(k-1 + 1 - \frac{d}{2p}\right)}{\Gamma(k-1+1)} \\
	\le\; & \Gamma\left( \frac{2p-d}{2p} \right) + \sum_{k=2}^{n} \frac{1}{(1+\tau)^{k-1}}\frac{1}{(k-1)^{d/(2p)}} \\
	=\,& \Gamma\left( \frac{2p-d}{2p} \right) + \sum_{k=1}^{n-1} \frac{1}{(1+\tau)^{k}}\frac{1}{k^{d/(2p)}} \\
	\le\; & \Gamma\left( \frac{2p-d}{2p} \right) + \int_0^{\infty}  \frac{\sigma^{-d/(2p)}}{(1+\tau)^{\sigma}}\ d\sigma \\
	=\; & \Gamma\left( \frac{2p-d}{2p} \right) \left( 1 + [\ln{(1+\tau)}]^{(d-2p)/(2p)} \right).
\end{align*}
Recalling that $\ln{(1+\tau)}\ge\tau/2$ as $\tau\in (0,1)$, we further obtain
\begin{align*}
	\sum_{k=1}^n \frac{1}{(1+\tau)^{k-1}} \frac{\Gamma\left(k - \frac{d}{2p}\right)}{\Gamma(k)} \le\; \Gamma\left( \frac{2p-d}{2p} \right) \left( 1 + \left[ \frac{\tau}{2} \right]^{(d-2p)/(2p)} \right).
\end{align*}
Combining~\eqref{zLs26} and the above inequality gives
\begin{align*}
	\|z_n\|_\infty \le\; & \frac{\|G_1\|_1}{(1+\tau)^n} \|z_0\|_\infty + \|G_1\|_{p'} F_p \left( \frac{2}{1+\tau} \right)^{(2p-d)/(2p)} \Gamma\left( \frac{2p-d}{2p} \right) \left( 1 + \tau^{(2p-d)/(2p)} \right) \\
	& \le  \|G_1\|_1 \|z_0\|_\infty + 4 \|G_1\|_{p'} F_p \Gamma\left( \frac{2p-d}{2p} \right).
	\eqntag
	\label{zLs27} 
\end{align*}

Similarly, in view of~\eqref{zLs25}, it follows from~\eqref{zLs23}, \eqref{zLs24b} and Young's inequality (for convolutions) that
\begin{align*}
	\|\nabla z_n\|_\infty \le\; & \|\mathcal{B}_\tau^n\|_1 \|\nabla z_0\|_\infty + \tau \sum_{k=1}^{n} \|\nabla \mathcal{B}_\tau^{k}\|_{p'} \| f_{n+1-k}\|_p \\
	\le\; & \frac{\|G_1\|_1}{(1+\tau)^n} \|\nabla z_0\|_\infty \\
	& + \|\nabla G_1\|_{p'} F_p \left( \frac{\tau}{1+\tau} \right)^{(p-d)/(2p)} \sum_{k=1}^n \frac{1}{(1+\tau)^{k-1}} \frac{\Gamma\left(k - \frac{1}{2} - \frac{d}{2p}\right)}{\Gamma(k)}.
\end{align*}
Arguing as above, we find that
\begin{align*}
	\sum_{k=1}^n \frac{1}{(1+\tau)^{k-1}} \frac{\Gamma\left(k - \frac{1}{2} - \frac{d}{2p}\right)}{\Gamma(k)} \le \Gamma\left( \frac{1}{2} - \frac{d}{2p} \right) \left[ 1 + \left( \frac{\tau}{2} \right)^{(d-p)/2p} \right].
\end{align*}
Combining the above two inequalities, we end up with
\begin{align*}
	\|\nabla z_n\|_\infty \le\; & \frac{\|G_1\|_1}{(1+\tau)^n} \|\nabla z_0\|_\infty + \|\nabla G_1\|_{p'} F_p \left( \frac{2}{1+\tau} \right)^{(p-d)/(2p)} \Gamma\left( \frac{1}{2} - \frac{d}{2p} \right) \left[ 1 + \tau^{(p-d)/2p} \right] \\
	\le\; & \|G_1\|_1 \|\nabla z_0\|_\infty + 4 \|\nabla G_1\|_{p'} F_p \Gamma\left( \frac{1}{2} - \frac{d}{2p} \right).
\end{align*}
Recalling~\eqref{zLs27}, we have shown~\eqref{zLs17a}.

We finally turn to the proof of~\eqref{zLs17b} and set $L_n := z_n - \Delta z_n$ for $n\in\mathbb{N}$. By~\eqref{zLs00}, $L_n$ solves
\begin{equation*}
	\frac{L_n-L_{n-1}}{\tau} - \Delta L_n + L_n = f_n - \Delta f_n \;\;\text{ in }\;\; \mathbb{R}^d, \qquad n\ge 1,
\end{equation*}
which is nothing but~\eqref{zLs00} with right-hand side $f_n-\Delta f_n$ instead of $f_n$. We then infer from~\eqref{zLs23} that
\begin{align*}
	L_n =\; & \mathcal{B}_\tau^n * L_0 + \tau \sum_{k=1}^n \mathcal{B}_\tau^{n+1-k} * (f_k-\Delta f_k) \\
	=\; & \mathcal{B}_\tau^n * L_0 + \tau \sum_{k=1}^n \mathcal{B}_\tau^{n+1-k} * f_k  - \tau \sum_{k=1}^n \nabla \mathcal{B}_\tau^{n+1-k} * \nabla f_k,
\end{align*}
whence, using~\eqref{zLs16b}, \eqref{zLs24} and Young's inequality (for convolutions),
\begin{align*}
	\|L_n\|_\infty \le\; & \|\mathcal{B}_\tau^n\|_1 \|L_0\|_\infty + \tau \sum_{k=1}^n \|\mathcal{B}_\tau^{k}\|_1 \|f_{n+1-k}\|_\infty  + \tau \sum_{k=1}^n \|\nabla \mathcal{B}_\tau^{k}\|_1 \|\nabla f_{n+1-k}\|_\infty \\
	& \le \frac{\|G_1\|_1}{(1+\tau)^n} \|z_0-\Delta z_0\|_\infty + \frac{\tau}{1+\tau} \|G_1\|_1 F_{1,\infty} \sum_{k=1}^n \frac{1}{(1+\tau)^{k-1}} \\
	& + \left( \frac{\tau}{1+\tau} \right)^{1/2} \|\nabla G_1\|_1 F_{1,\infty} \sum_{k=1}^n \frac{1}{(1+\tau)^{k-1}} \frac{\Gamma\left( k-\frac{1}{2} \right)}{\Gamma(k)}.
\end{align*}
Since
\begin{equation*}
	\sum_{k=1}^n \frac{1}{(1+\tau)^{k-1}} = \frac{1+\tau}{\tau} \left( 1 - \frac{1}{(1+\tau)^n} \right) \le \frac{1+\tau}{\tau}
\end{equation*}
and
\begin{align*}
	\sum_{k=1}^n \frac{1}{(1+\tau)^{k-1}} \frac{\Gamma\left( k-\frac{1}{2} \right)}{\Gamma(k)} \le\; & \Gamma\left( \frac{1}{2} \right) + \sum_{k=2}^{n} \frac{1}{(1+\tau)^{k-1}} \frac{1}{\sqrt{k-1}} \\
	& \le \Gamma\left( \frac{1}{2} \right) \left( 1 + \frac{1}{\sqrt{\ln{(1+\tau)}}} \right) \\
	& \le \Gamma\left( \frac{1}{2} \right) \left( 1 + \sqrt{\frac{2}{\tau}} \right)
\end{align*}
by Gautschi's inequality, we conclude that
\begin{align*}
	\|L_n\|_\infty \le\; & \frac{\|G_1\|_1}{(1+\tau)^n} \|z_0-\Delta z_0\|_\infty + \|G_1\|_1 F_{1,\infty} \\
	& + \left( \frac{2}{1+\tau} \right)^{1/2} \|\nabla G_1\|_1 F_{1,\infty}  \Gamma\left( \frac{1}{2} \right) \left( 1 + \sqrt{\tau} \right) \\
	\le\; & \|G_1\|_1 \|z_0-\Delta z_0\|_\infty + \|G_1\|_1 F_{1,\infty} + 4 \|\nabla G_1\|_1 F_{1,\infty}  \Gamma\left( \frac{1}{2} \right) 
\end{align*}
and thereby establish~\eqref{zLs17b}.
\end{proof}

\typeout{} %
\bibliographystyle{siam}
\bibliography{HL_DS}

\begin{thebibliography}{10}

\bibitem{Ali1979}
{\sc N.~D. Alikakos}, {\em {$L\sp{p}$}\ bounds of solutions of
  reaction-diffusion equations}, Comm. Partial Differential Equations, 4
  (1979), pp.~827--868.

\bibitem{AGS2005}
{\sc L.~Ambrosio, N.~Gigli, and G.~Savar{\'e}}, {\em Gradient flows in metric
  spaces and in the space of probability measures}, Basel: Birkh{\"a}user,
  2005.

\bibitem{APW2002}
{\sc A.~Ashyralyev, S.~Piskarev, and L.~Weis}, {\em On well-posedness of
  difference schemes for abstract parabolic equations in {$L^p([0,T];E)$}
  spaces}, Numer. Funct. Anal. Optim., 23 (2002), pp.~669--693.

\bibitem{BCKKLL2015}
{\sc A.~Blanchet, J.~A. Carrillo, D.~Kinderlehrer, M.~Kowalczyk, {\relax
  Ph}.~Lauren\c{c}ot, and S.~Lisini}, {\em A hybrid variational principle for
  the {K}eller-{S}egel system in {$\Bbb R^2$}}, ESAIM Math. Model. Numer.
  Anal., 49 (2015), pp.~1553--1576.

\bibitem{BlLa2013}
{\sc A.~Blanchet and {\relax Ph}.~Lauren{\c{c}}ot}, {\em The
  parabolic-parabolic {K}eller-{S}egel system with critical diffusion as a
  gradient flow in {$\Bbb R^d,\ d\ge3$}}, Comm. Partial Differential Equations,
  38 (2013), pp.~658--686.

\bibitem{CLW2012}
{\sc L.~Chen, J.-G. Liu, and J.~Wang}, {\em Multidimensional degenerate
  {K}eller-{S}egel system with critical diffusion exponent {$2n/(n+2)$}}, SIAM
  J. Math. Anal., 44 (2012), pp.~1077--1102.

\bibitem{ChWa2014}
{\sc L.~Chen and J.~Wang}, {\em Exact criterion for global existence and blow
  up to a degenerate {K}eller-{S}egel system}, Doc. Math., 19 (2014),
  pp.~103--120.

\bibitem{DiWa2019}
{\sc M.~Ding and W.~Wang}, {\em Global boundedness in a quasilinear fully
  parabolic chemotaxis system with indirect signal production}, Discrete
  Contin. Dyn. Syst. Ser. B, 24 (2019), pp.~4665--4684.

\bibitem{EnNa2000}
{\sc K.-J. Engel and R.~Nagel}, {\em One-parameter semigroups for linear
  evolution equations}, vol.~194 of Graduate Texts in Mathematics,
  Springer-Verlag, New York, 2000.
\newblock With contributions by S. Brendle, M. Campiti, T. Hahn, G. Metafune,
  G. Nickel, D. Pallara, C. Perazzoli, A. Rhandi, S. Romanelli and R.
  Schnaubelt.

\bibitem{FuSe2017}
{\sc K.~Fujie and T.~Senba}, {\em Application of an {A}dams type inequality to
  a two-chemical substances chemotaxis system}, J. Differential Equations, 263
  (2017), pp.~88--148.

\bibitem{FuSe2019}
\leavevmode\vrule height 2pt depth -1.6pt width 23pt, {\em Blowup of solutions
  to a two-chemical substances chemotaxis system in the critical dimension}, J.
  Differential Equations, 266 (2019), pp.~942--976.

\bibitem{HTZ2025}
{\sc M.~Herda, A.~Trescases, and A.~Zurek}, {\em A finite volume scheme for the
  local sensing chemotaxis model}, SMAI J. Comput. Math., 11 (2025),
  pp.~637--676.

\bibitem{Hos2026}
{\sc T.~Hosono}, {\em On the {C}auchy problem of a chemotaxis system with
  indirect signal production}, in Mathematical {A}nalysis and {A}pproximation
  of {PDE}-{C}hemotaxis {M}odels, vol.~43 of SEMA SIMAI Springer Ser.,
  Springer, Cham, 2026, pp.~61--82.

\bibitem{HoLa2025}
{\sc T.~Hosono and {\relax Ph}.~Lauren\c{c}ot}, {\em Global existence and
  boundedness of solutions to a fully parabolic chemotaxis system with indirect
  signal production in {$\Bbb R^4$}}, J. Differential Equations, 416 (2025),
  pp.~2085--2133.

\bibitem{IsYo2020}
{\sc S.~Ishida and T.~Yokota}, {\em Boundedness in a quasilinear fully
  parabolic {K}eller-{S}egel system via maximal {S}obolev regularity}, Discrete
  Contin. Dyn. Syst. Ser. S, 13 (2020), pp.~212--232.

\bibitem{JKO1998}
{\sc R.~Jordan, D.~Kinderlehrer, and F.~Otto}, {\em The variational formulation
  of the {F}okker-{P}lanck equation}, SIAM J. Math. Anal., 29 (1998),
  pp.~1--17.

\bibitem{KeSe1971}
{\sc E.~F. Keller and L.~A. Segel}, {\em Model for chemotaxis}, J. Theoret.
  Biol., 30 (1971), pp.~225--234.

\bibitem{KNO2015}
{\sc A.~Kimijima, K.~Nakagawa, and T.~Ogawa}, {\em Threshold of global behavior
  of solutions to a degenerate drift-diffusion system in between two critical
  exponents}, Calc. Var. Partial Differential Equations, 53 (2015),
  pp.~441--472.

\bibitem{KLL2016}
{\sc B.~Kov{\'a}cs, B.~Li, and C.~Lubich}, {\em A-stable time discretizations
  preserve maximal parabolic regularity}, SIAM J. Numer. Anal., 54 (2016),
  pp.~3600--3624.

\bibitem{Lau1994}
{\sc {\relax Ph}.~Lauren\c{c}ot}, {\em Solutions to a {P}enrose-{F}ife model of
  phase-field type}, J. Math. Anal. Appl., 185 (1994), pp.~262--274.

\bibitem{LiWa2026}
{\sc K.~Lin and S.~Wang}, {\em Sharp critical mass criterion for the fully
  parabolic {K}eller-{S}egel system with the intermediate exponent},
  Nonlinearity, 39 (2026), pp.~Paper No. 015008, 25.

\bibitem{MLL2025}
{\sc X.~Mao, M.~Liu, and Y.~Li}, {\em Finite-time blowup in a fully parabolic
  chemotaxis model involving indirect signal production}.
\newblock Preprint, {arXiv}:2503.12439 [math.{AP}] (2025), 2025.

\bibitem{MMS2009}
{\sc D.~Matthes, R.~J. McCann, and G.~Savar{\'e}}, {\em A family of nonlinear
  fourth order equations of gradient flow type}, Commun. Partial Differ.
  Equations, 34 (2009), pp.~1352--1397.

\bibitem{Mim2017}
{\sc Y.~Mimura}, {\em The variational formulation of the fully parabolic
  {K}eller-{S}egel system with degenerate diffusion}, J. Differential
  Equations, 263 (2017), pp.~1477--1521.

\bibitem{Mim2024b}
\leavevmode\vrule height 2pt depth -1.6pt width 23pt, {\em Formulation of
  {C}himera gradient flows for chemotaxis systems with indirect signal
  production and degenerate diffusion}.
\newblock Preprint, {arXiv}:2406.14536 [math.{AP}] (2024), 2024.

\bibitem{Mim2024a}
\leavevmode\vrule height 2pt depth -1.6pt width 23pt, {\em Global existence of
  solutions to parabolic-parabolic {Keller}-{Segel} system in between two
  critical exponents}, Adv. Math. Sci. Appl., 33 (2024), pp.~77--96.

\bibitem{Oga2011}
{\sc T.~Ogawa}, {\em The degenerate drift-diffusion system with the {S}obolev
  critical exponent}, Discrete Contin. Dyn. Syst. Ser. S, 4 (2011),
  pp.~875--886.

\bibitem{Sug2006}
{\sc Y.~Sugiyama}, {\em Global existence in sub-critical cases and finite time
  blow-up in super-critical cases to degenerate {Keller}-{Segel} systems.},
  Differ. Integral Equ., 19 (2006), pp.~841--876.

\end{thebibliography}

\end{document}